\documentclass[11pt]{amsart}
\usepackage{amsmath, amssymb, amscd, mathrsfs, slashed, enumerate, url}
\usepackage{color}
\usepackage{extsizes}
\usepackage{xcolor}
\usepackage[all,cmtip]{xy}
\usepackage{multicol}
\usepackage{enumitem} 
\usepackage{indentfirst}
\usepackage{tikz}
\usetikzlibrary{calc}
\usepackage{latexsym}
\usepackage{bm}
\usepackage{graphicx}
\usepackage{subfigure,esint}
\usepackage{float}
\usepackage{verbatim}
\usepackage{comment}
\usepackage[top=1in, bottom=1in, left=1.25in, right=1.25in]{geometry}
\usepackage{epsfig,dsfont,amsthm,amsfonts,amsbsy}
\usepackage[colorlinks]{hyperref}
\usepackage[toc,page]{appendix}
\usepackage{tikz-cd}
\usepackage{amsthm,thmtools,xcolor}
\newtheorem{theorem}{Theorem}[section]

\newtheorem{corollary}[theorem]{Corollary}

\newtheorem{definition}[theorem]{Definition}
\newtheorem{question}[theorem]{Question}

\newtheorem{lemma}[theorem]{Lemma}

\newtheorem{proposition}[theorem]{Proposition}
\newtheorem*{remark}{Remark}



\newcommand{\MC}{\mathcal{C}}
\newcommand{\mR}{\mathcal{R}}



\newcommand{\tSigma}{\widetilde{\Sigma}}

\newcommand{\SO}{\mathrm{SO}}

\newcommand{\mbR}{\mathbb{R}}



%









%

%

\newcommand{\RR}{\mathbb{R}}

\newcommand{\ZZ}{\mathbb{Z}}

\newcommand{\SL}{\mathrm{SL}}
\newcommand{\trace}{\mathrm{tr}}


\newcommand{\F}{\mathcal{F}}


\usepackage{calligra}
\DeclareMathAlphabet{\mathcalligra}{T1}{calligra}{m}{n}

\declaretheoremstyle[
headfont=\color{blue}\normalfont\bfseries,
bodyfont=\color{blue}\normalfont\itshape,
]{colored}


\usepackage[utf8]{inputenc}
\usepackage[english]{babel}
\usepackage{fancyhdr}
\usepackage{accents}

\newcommand{\Addresses}{{
  \bigskip
  \footnotesize

  Jiahuang Chen: \textsc{Institute of Mathematics, Academy of Mathematics and Systems Science, Chinese
 Academy of Sciences, Beijing, 100190, P.\,R.\,China}\par\nopagebreak
  \textit{E-mail address}: \texttt{chenjiahuang@amss.ac.cn}

  \medskip

  Siqi He: \textsc{Morningside Center of Mathematics,
	Chinese Academy of Sciences, 
	Beijing, 100190, P.\,R.\,China}\par\nopagebreak
  \textit{E-mail address}: \texttt{sqhe@amss.ac.cn}

}}

\begin{document}
	\title[On the Non-Archimedean Hitchin Map for $\SL_2(F)$]{On the Non-Archimedean Hitchin Map for $\SL_2(F)$}

\author{Jiahuang Chen
        \and 
        Siqi He}

\maketitle

\begin{abstract}
Let $F$ be a non-Archimedean valued field, $\Sigma$ a closed Riemann surface of genus at least two, and $\Gamma$ its fundamental group. Building on the theory of equivariant harmonic maps into $\mathbb{R}$-trees, we study the non-Archimedean Hitchin map from the $\SL_2(F)$-character variety $\mathcal{X}_F(\Gamma)$, equipped with the non-Archimedean topology, to the space of holomorphic quadratic differentials on $\Sigma$. We prove that this map is continuous and that its image is contained in the space of Jenkins--Strebel differentials. Moreover, we establish a dynamical characterization of unbounded representations, showing that the induced action of $\Gamma$ on the Bruhat--Tits tree of $\SL_2(F)$ is never small.
\end{abstract}

\section{Introduction}
The study of representations of surface groups into Lie groups has deep connections with complex geometry, low-dimensional topology and dynamical systems. For a closed Riemann surface $\Sigma$ of genus $g\ge 2$,  classical non-abelian Hodge theory establishes a deep correspondence between the moduli space of reductive representations of $\Gamma$ into a complex reductive group $G$ and the moduli of Higgs bundles \cite{hitchin1987self}. A key feature of this correspondence is the Hitchin fibration, which maps the representation space to the vector space of holomorphic differentials on $\Sigma$. In rank two, this yields a holomorphic map from the character variety to the space of quadratic differentials $H^0(\Sigma,\mathcal{K}_\Sigma^{\otimes 2})$, endowing the moduli space with the structure of an algebraic integrable system \cite{hitchin1992lie}.

Given a reductive representation $\rho: \Gamma \to \mathrm{SL}_2(\mathbb{C})$, there exists a unique $\rho$-equivariant harmonic map $u: \widetilde{\Sigma} \to \mathbb{H}^3$, where $\mathbb{H}^3 = \SL_2(\mathbb{C})/\mathrm{SU}(2)$ is the associated symmetric space. Up to scaling, the image of $\rho$ under the classical Hitchin map coincides with the Hopf differential of $u$ on $\Sigma$.

In this paper, we study an analogous construction in the non-Archimedean setting, where the target group is $\mathrm{SL}_2(F)$ for a non-Archimedean valued field $F$ of characteristic zero. The Bruhat-Tits tree $T_F$ associated to $\mathrm{SL}_2(F)$ is a simply connected, one-dimensional simplicial complex on which $\SL_2(F)$ acts isometrically. It serves as the non-Archimedean analogue of the symmetric space $\mathbb{H}^3$. Motivated by this analogy, we consider the \emph{non-Archimedean Hitchin map}, which assigns to each representation $\rho: \Gamma \to \SL_2(F)$ the Hopf differential of a $\rho$-equivariant harmonic map $u: \tSigma \to T_F$, whenever such a map exists. The existence and uniqueness of such harmonic maps for reductive representations were established in \cite{GromovSchoen1992,korevaar1993sobolev,korevaari1997global,mese2002uniqueness}. See also \cite{wolf1995harmonic,WolfRealizing, Klingler2013,daskalopoulos1998character,daskalopoulos2000morganshalen}.

We first define the non-Archimedean topology on the representation variety $\mR_F(\Gamma)$ of the surface group $\Gamma$. The character variety $\mathcal{X}_F(\Gamma)$ is then defined to be the \emph{Hausorffization} of the quotient space $\mR_F(\Gamma)/\SL_2(F)$. Using the harmonic map theory, we define a map $\overline{\Phi}:\mR_{F}(\Gamma)\to H^0(\Sigma,\mathcal{K}_\Sigma^{\otimes 2})$. Then $\overline{\Phi}$ descends to the character variety $\mathcal{X}_F(\Gamma)$, yielding the \emph{non-Archimedean Hitchin map}:
\begin{align}
    \Phi:\mathcal{X}_{F}(\Gamma)\to H^0(\Sigma,\mathcal{K}_\Sigma^{\otimes 2}).
\end{align}
Our first main result concerns the continuity of this map:
\begin{theorem}\label{thm:continuity}
    Let $\mathcal{X}_F(\Gamma)$ be equipped with the non-Archimedean topology, and $H^0(\Sigma,\mathcal{K}_\Sigma^{\otimes 2})$ with the Euclidean topology. Then the Hitchin map $\Phi$ is continuous. 
\end{theorem}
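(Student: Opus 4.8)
The plan is to prove continuity of the lift $\overline{\Phi}:\mR_F(\Gamma)\to H^0(\Sigma,\mathcal{K}_\Sigma^{\otimes 2})$, where $\mR_F(\Gamma)$ carries the non-Archimedean topology; since $\overline{\Phi}$ is constant on conjugacy classes (conjugation acts on $T_F$ by isometries, which do not change the Hopf differential) and the target is Hausdorff, the universal property of the Hausdorffification then descends continuity to $\Phi$ on $\mathcal{X}_F(\Gamma)$. Fix a finite generating set $S\subset\Gamma$ and a point $\tx\in\tSigma$, and take a sequence $\rho_n\to\rho$ (arguing with sequences, and reducing to this from nets where the topology fails to be first countable). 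Entrywise convergence in $F$ gives $\mathrm{tr}\,\rho_n(\gamma)\to\mathrm{tr}\,\rho(\gamma)$ for all $\gamma$, and since the valuation $v$ is continuous as a map to $\mathbb{R}\cup\{+\infty\}$, the standard identity $\ell_\rho(\gamma)=\max\{0,-2v(\mathrm{tr}\,\rho(\gamma))\}$ for the translation length on $T_F$ yields $\ell_{\rho_n}(\gamma)\to\ell_\rho(\gamma)$ for every $\gamma\in\Gamma$; moreover the fixed ends, and hence the axes, of $\rho_n(\gamma)$ converge to those of $\rho(\gamma)$.

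Next I would bound the energies uniformly. Since $u_{\rho_n}$ minimizes energy in its $\rho_n$-equivariant class, $E(u_{\rho_n})$ is at most the energy of any $\rho_n$-equivariant comparison map; building such a map by geodesic interpolation from a point of controlled displacement produces a bound depending only on the finite set $\{\ell_{\rho_n}(\gamma):\gamma\in S\}$, which converges, so $\sup_n E(u_{\rho_n})<\infty$. For a harmonic map into an $\mathbb{R}$-tree the map is, away from a measure-zero branch locus, locally a real harmonic function, so the holomorphic and anti-holomorphic energies coincide and $E(u_{\rho_n})=c\,\|\Phi_{\rho_n}\|_{L^1}$ for a universal constant $c$; thus $\|\Phi_{\rho_n}\|_{L^1}$ is bounded. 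As $H^0(\Sigma,\mathcal{K}_\Sigma^{\otimes 2})$ is finite-dimensional and all its norms are equivalent, $\{\Phi_{\rho_n}\}$ is precompact in the Euclidean topology, and it remains only to show that every subsequential limit equals $\Phi_\rho$.

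To identify the limits I would pass to a subsequence and apply the Korevaar--Schoen regularity theory, which promotes the energy bound to uniform interior Lipschitz estimates for $u_{\rho_n}$. The images lie in the minimal invariant subtrees $T_{\rho_n}$, which, because the generating axes converge to those of $\rho$, remain inside a fixed finite (hence compact) subtree of $T_F$; this circumvents the possible non-local-compactness of $T_F$ and lets me extract a limit $u_\infty:\tSigma\to T_F$. Lower semicontinuity of energy, together with the comparison construction of the previous step (approximating any $\rho$-equivariant competitor by $\rho_n$-equivariant maps of asymptotically equal energy), shows that $u_\infty$ is $\rho$-equivariant and energy-minimizing, so by uniqueness $u_\infty=u_\rho$. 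The convergence $u_{\rho_n}\to u_\rho$ then forces $\Phi_{\rho_n}\to\Phi_\rho$, whence every subsequential limit is $\Phi_\rho$ and therefore $\Phi_{\rho_n}\to\Phi_\rho$, proving continuity.

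The heart of the argument, and the step I expect to be hardest, is the equivariance-approximation used twice above: given $\rho_n\to\rho$ and a $\rho$-equivariant map, one must manufacture $\rho_n$-equivariant maps with energies converging to the original, exploiting the CAT(0) geometry of the fixed tree (unique geodesics and convexity of the distance function). Two further delicate points are the uniform confinement of the $u_{\rho_n}$ to a single compact subtree when $T_F$ is not locally compact, and the behaviour at reducible or elementary representations, where the minimal invariant subtree can degenerate to a point, a line, or an end and the equivariant harmonic map need not be unique; there one must verify separately that $\overline{\Phi}$ stays continuous, typically by showing the relevant Hopf differentials vanish or are pinned down by the converging length data.
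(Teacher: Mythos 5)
Your outer skeleton coincides with the paper's: you prove continuity of the lift $\overline{\Phi}$ on $\mR_F(\Gamma)$ and then descend to $\mathcal{X}_F(\Gamma)$ by the universal property of the Hausdorffization (Lemma \ref{lem:Hausfact}), exactly as in Lemma \ref{lem:charHitchin}; and your first step, convergence of traces and hence of translation lengths via $\ell_\rho(\gamma)=-2\min\{0,v(\trace_\rho(\gamma))\}$, is the paper's proof that $\Psi$ is continuous (Lemma \ref{lem_elementtraceiscontinuous} together with Lemma \ref{lem_length=logtrace}). The divergence is in the core analytic step: the paper simply composes $\Psi$ with the map $H$ of Lemma \ref{lem:Hiscontinuous}, i.e.\ it cites the Daskalopoulos--Dostoglou--Wentworth theorem that length functions determine Hopf differentials continuously, whereas you propose to re-derive that continuity by a direct compactness argument for equivariant harmonic maps. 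That is a legitimate alternative in principle, but as written it has genuine gaps.

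Concretely: (i) your uniform energy bound is false as stated---the translation lengths of a generating set $S$ alone do not control the minimal joint displacement (two elliptic isometries of a tree whose fixed-point sets are at distance $R$ both have zero translation length, yet no point is displaced by less than roughly $R$ by both); one needs the lengths of pairwise products of generators as well (Serre's lemma), which is fixable since those also converge. (ii) Your confinement step is wrong: for an unbounded representation with nonabelian length function, the minimal $\Gamma$-subtree is the union of the axes of all hyperbolic elements (Lemma \ref{lem_minimalsubtree}), an unbounded, non-compact tree, so ``the images lie in a fixed finite subtree'' cannot be the mechanism by which limits exist; what must actually be shown is that the basepoint images $u_{\rho_n}(\tx)$ remain in a fixed bounded set, which requires a separate argument you do not supply. (iii) Most seriously, the degenerate loci you explicitly defer are where the real content of the theorem sits, and your framework is ill-posed there: for an unbounded \emph{reducible} $\rho$ (abelian length function), a $\rho$-equivariant harmonic map into $T_F$ need not exist at all (see the remark following Theorem \ref{theorem_existence_uniqueness}), which is precisely why the paper defines $\overline{\Phi}$ as $H\circ\Psi$ through length functions and Proposition \ref{prop:abelianHopfdiff} rather than through harmonic maps into $T_F$ itself; and at a bounded $\rho$ continuity is immediate in the paper because the bounded locus is \emph{open} (Proposition \ref{prop_openness})---equivalently, because $\ell_\rho$ takes values in $2\ZZ_{\ge 0}$ and $\mathcal{O}_v$ is clopen, so every representation sufficiently close to a bounded one is itself bounded and is sent to $0$. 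This clopen decomposition is the distinctly non-Archimedean ingredient that lets the paper avoid all hard analysis near the degenerate set; your proposal never invokes it, and without it (or a quantitative substitute built from the corrected energy bound) your compactness scheme does not close at bounded or reducible limit points.
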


The topology of non-Archimedean fields differs drastically from the Euclidean topology. For instance, the unit ball $\mathbb{Z}_p \subset \mathbb{Q}_p$ is both open and closed and homeomorphic to the Cantor set in $[0,1] \subset \mathbb{R}$. Given that $\mathcal{X}_{F}(\Gamma)$ is equipped with the non-Archimedean topology and that $\Phi$ is continuous, one naturally expects the image of $\Phi$ to form an special subset of $H^0(\Sigma, \mathcal{K}_\Sigma^{\otimes 2})$. 

On the other hand, Jenkins-Strebel differentials are distinguished by the property that almost all of their trajectories are closed. They play a central role in Teichm\"uller theory and the theory of measured foliations \cite{Masur75,hubbard1979quadratic,MasurWolf95,Markovic}. While $\mathcal{JS}(\Sigma)$ is a \emph{dense} subset of $H^0(\Sigma, \mathcal{K}_\Sigma^{\otimes 2})$ \cite{douadyhubbard75}, it has \emph{measure zero} due to the fact that each such a differential is uniquely determined by a finite admissible curve system and positive weights on the curves \cite{strebel84quadratic,hubbard1979quadratic,Wolf_JSdiffs,WolfRealizing,liu04existence}.

Our second main theorem gives a rather surprising characterization of the image $\mathrm{Im}\Phi$:
\begin{theorem}\label{thm:imageJS}
    The image of $\Phi$ is contained in the subspace $\mathcal{JS}(\Sigma)$ of Jenkins-Strebel differentials on $\Sigma$.
\end{theorem}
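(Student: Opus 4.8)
The plan is to identify the foliation of the Hopf differential with the fibers of a map onto a \emph{finite} metric graph, which forces the closed-trajectory structure characterizing Jenkins--Strebel differentials. First I reduce to unbounded representations: a bounded $\rho$ fixes a point of $T_F$, so the equivariant harmonic map is constant and $\Phi=0\in\mathcal{JS}(\Sigma)$. For unbounded $\rho$ let $u:\tSigma\to T_F$ be the $\rho$-equivariant harmonic map with (holomorphic) Hopf differential $\Phi$. As $\rho$ fixes no point of $T_F$, the map $u$ is nonconstant; since the target is one-dimensional this forces $\Phi\not\equiv 0$, and $\Phi$ then has only finitely many zeros.

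The structural heart is to pass to the minimal invariant subtree and exploit its simpliciality. Let $T_\rho\subseteq T_F$ be the minimal $\rho(\Gamma)$-invariant subtree. Nearest-point projection onto a convex subtree of a tree is distance-nonincreasing, so energy minimality forces the image of $u$ into $T_\rho$. Because $T_F$ is simplicial and $\SL_2(F)$ acts without inversions, $T_\rho$ is a simplicial subtree; and since $\Gamma$ is finitely generated and acts minimally on it, Bass--Serre theory guarantees that a finitely generated group acting minimally on a simplicial tree has finite quotient graph, so $G:=T_\rho/\rho(\Gamma)$ is a \emph{finite} metric graph. Thus $u$ descends to a continuous map $\bar u:\Sigma\to G$.

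I then read the Jenkins--Strebel structure directly off $\bar u$. On each open edge of $G$ the arc-length coordinate pulls back to a nonconstant local harmonic function whose level sets are precisely the leaves of the foliation of $\Phi$ collapsed by $u$ (the foliation dual to the tree); over a regular interior value such a level set is a compact one-manifold, i.e. a union of closed trajectories. Each such leaf is a connected component of a fiber $\bar u^{-1}(p)$, and every fiber is compact because $\Sigma$ is compact and $G$ is Hausdorff --- so every regular leaf is closed, even where $u$ folds. The sets $\bar u^{-1}(\text{vertex})$ carry the singular leaves through the zeros of $\Phi$; as $G$ has finitely many vertices and $\Phi$ finitely many zeros, the critical graph is finite. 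Hence $\Sigma$ is a finite union of cylinders swept out by closed trajectories glued along a finite critical graph, which is exactly the condition $\Phi\in\mathcal{JS}(\Sigma)$.

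The main obstacle is establishing the finiteness of $G$ and confirming that the fibers of $\bar u$ really do coincide with the foliation of $\Phi$. Finiteness relies essentially on the simplicial (hence locally finite) structure of $T_F$, so that the minimal subtree of a finitely generated group is cocompact; this is where discreteness of the valuation enters, and the argument would break for densely valued fields. The fiber description requires the local model for harmonic maps into trees, guaranteeing that along each edge the map is a nonconstant real harmonic function with one-dimensional level sets, with zeros of $\Phi$ mapping into the singular fibers. Finally I must check that the folding forced by non-small actions does not enlarge the leaves: this is handled by the observation that individual leaves persist as connected components of compact fibers, which is precisely what reconciles the Jenkins--Strebel conclusion with the non-smallness of the $\Gamma$-action on $T_F$.
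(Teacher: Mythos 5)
Your reduction to the minimal subtree $T_\rho$ and the finiteness of $G=T_\rho/\rho(\Gamma)$ are sound: a finitely generated group acting minimally on a simplicial tree without inversions does have finite quotient graph, and this does not even require local finiteness of $T_F$ (note your parenthetical ``simplicial, hence locally finite'' is false when the residue field is infinite, but nothing is lost). The genuine gap is in the step where you conclude that every leaf is closed. Compactness of the fibers of $\bar u$ does \emph{not} rule out non-closed leaves: a compact set can perfectly well contain a recurrent trajectory --- indeed, by Theorem \ref{thm_noemptyinterior} the closure of such a trajectory, which has nonempty interior, would simply sit inside the fiber. Your claim that ``each leaf is a connected component of a fiber'' is what would prevent this, but it is circular: the connected component of the closed fiber containing a given leaf contains that leaf's \emph{closure}, so identifying the leaf with the component presupposes that the leaf is closed, which is the point at issue. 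What one actually needs is that each leaf is relatively open in its fiber. By Proposition \ref{prop_harmoncmap_factortree} the fiber of $u$ over the orbit $\Gamma p$ is $\pi^{-1}\bigl(f^{-1}(\Gamma p)\bigr)$, so relative openness amounts to discreteness of $f^{-1}(\Gamma p)$ in $T_{q_u}$, where $f:T_{q_u}\to T_\rho$ is the folding map. Morphisms of $\mathbb{R}$-trees can have non-discrete point preimages (fold infinitely many bristles of a broom onto a common segment), and before knowing that $q_u$ is Jenkins--Strebel you cannot assume $T_{q_u}$ is simplicial --- for an arational foliation its branch points are dense --- so this discreteness requires proof and none is given. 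Concretely, your harmonic-function/1-manifold argument only covers fibers over interior points of edges of $G$ avoiding the zeros of $\Phi$; the finitely many fibers over vertices of $G$ (and through zeros) are exactly where a recurrent trajectory could hide, and your assertion that those fibers ``carry the singular leaves'' is precisely the statement to be proven.

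The gap can be closed, but only by the two ingredients of the paper's own proof, which then render the cocompactness machinery superfluous: since $q_u$ is holomorphic and nonzero, no fiber of $\bar u$ can have nonempty interior (else $q_u=4(u_z)^2\,dz^2$ vanishes on an open set), while by Theorem \ref{thm_noemptyinterior} the closure of any non-closed noncritical trajectory has nonempty interior and lies in the closed fiber through it --- contradiction. For this one only needs fibers of $\bar u$ to be closed, i.e.\ that the quotient of the tree by $\Gamma$ is Hausdorff (Lemma \ref{lem_hausdorff}); neither minimality nor finiteness of $G$ enters. Separately, your opening move ``for unbounded $\rho$ let $u:\tSigma\to T_F$ be the $\rho$-equivariant harmonic map'' is not available when $\rho$ is unbounded and \emph{reducible}: the length function is then abelian and an equivariant harmonic map into $T_F$ need not exist (see the remark after Theorem \ref{theorem_existence_uniqueness}). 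In that case $\Phi(\rho)$ is defined via the harmonic map to $\RR$ of Proposition \ref{prop:abelianHopfdiff}, and its Hopf differential $\omega\otimes\omega$ must be treated separately (its vertical foliation $\ker\Re\omega$ has periods in a discrete subgroup of $\RR$, whence closed leaves).
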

The proof relies on the fact that the Bruhat–Tits tree $T_F$ is simplicial, which implies that every trajectory of the Hopf differential is nowhere dense---a trick already employed in \cite{Wolf_JSdiffs}. This observation agrees with the equivalent characterization of Jenkins-Strebel differentials.

Moreover, we also consider some dynamical consequences for a representation $\rho$. Recall that a group action on a tree is called \emph{small} if no edge stabilizer contains a free subgroup of rank two. We prove:
\begin{theorem}
Let $F$ be a locally compact non-Archimedean field. Let $\rho:\Gamma\to \mathrm{SL}_2(F)$ be a reductive representation.  
If $\rho$ is unbounded, then the induced action of $\Gamma$ on the Bruhat--Tits tree is not small.  
In particular, there exists an edge $e$ such that for every $\gamma\in\Gamma$, the stabilizer of the edge $\gamma. e$ contains a free group of rank two.  
Moreover, if $\rho$ is topologically dense in $\mathrm{SL}_2(F)$, then every edge stabilizer contains a rank-two free subgroup.
\end{theorem}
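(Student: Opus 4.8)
The plan is to reduce the whole statement to one elementary principle about $\Gamma$: a subgroup of a surface group contains a free group of rank two if and only if it is non-abelian. Indeed, every subgroup of $\Gamma$ is either free or of finite index (hence again a closed surface group of genus $\ge 2$), and in both cases non-abelianness forces a rank-two free subgroup, while abelian subgroups are trivial or infinite cyclic. Thus "not small" will follow once I exhibit, for a suitable edge $e$, two elements of the edge stabilizer $\mathrm{Stab}_\Gamma(e)=\rho^{-1}\big(\mathrm{Stab}_{\SL_2(F)}(e)\big)$ that do not commute. Here the ambient stabilizer $\mathrm{Stab}_{\SL_2(F)}(e)=I_e$ is an Iwahori subgroup, which is \emph{open} and compact precisely because $F$ is locally compact; openness is what will let me transfer density statements back and forth through $\rho$.

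First I would dispose of the degenerate case $\ker\rho\neq 1$. Since $\rho$ is unbounded, $\ker\rho$ has infinite index, and a nontrivial infinite-index normal subgroup of a surface group is free of infinite rank; as $\ker\rho$ fixes $T_F$ pointwise it lies in every edge stabilizer, so in fact every edge stabilizer contains a rank-two free group. Hence I may assume $\rho$ is injective. The key observation is then that the closure $H:=\overline{\rho(\Gamma)}$ is \emph{non-discrete}: a discrete subgroup of $\SL_2(F)$ meets each compact vertex stabilizer in a finite group, so it acts properly on $T_F$ and is therefore virtually free, which is impossible for a genus $\ge 2$ surface group. This step is exactly where local compactness is used.

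Next I would run a dichotomy on $H$ according to whether it stabilizes a bi-infinite geodesic. Reductivity of $\rho$ guarantees $H$ fixes no single end, and unboundedness guarantees it fixes no point. In the \textbf{linear case} $H$ preserves an apartment $A\cong\mathbb{R}$ (equivalently $H$ lies in the normalizer of a split torus); the homomorphism $\Gamma\to\mathrm{Isom}(A)$ then has infinite, virtually abelian image, so its kernel $K$ is a nontrivial infinite-index normal subgroup, hence free of infinite rank, and $K$ fixes every edge of $A$ pointwise, producing edges with rank-two free stabilizer. In the \textbf{non-elementary case} $H$ is non-solvable and non-discrete, so by the classification of closed subgroups (subalgebras) of $\SL_2$ over a local field its Lie algebra contains $\mathfrak{sl}_2(E)$ for a closed subfield $E$, whence $H$ contains an open subgroup of a conjugate of $\SL_2(E)$. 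For an edge $e$ in the associated sub-tree, $H\cap I_e$ contains two non-commuting unipotents (e.g.\ suitable upper and lower unipotents both fixing $e$), so it is non-abelian; and since $\rho(\Gamma)$ is dense in $H$ and $I_e$ is open, $\rho(\mathrm{Stab}_\Gamma(e))=\rho(\Gamma)\cap I_e$ is dense in $H\cap I_e$. Non-abelianness then transfers back through $\rho$ to $\mathrm{Stab}_\Gamma(e)$. For the final statement, topological density gives $H=\SL_2(F)$, so the argument applies to \emph{every} edge: each full Iwahori $I_e$ is non-abelian and $\rho(\Gamma)\cap I_e$ is dense in it, forcing every edge stabilizer in $\Gamma$ to be non-abelian and hence to contain a rank-two free subgroup.

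The main obstacle, I expect, is the non-discreteness step together with ruling out the naive "small" picture, in which the core curves of the Jenkins--Strebel cylinders appear to give merely cyclic edge groups. The resolution is that a would-be small simplicial action of a surface group (for instance the tree dual to a curve system) has \emph{infinite} vertex groups, whose $\rho$-images are bounded, hence relatively compact and therefore non-discrete; the closure $H$ then fattens the edge stabilizers, and it is precisely this fattening that the density argument converts into non-commuting elements of $\mathrm{Stab}_\Gamma(e)$. The remaining technical points are making the Lie-theoretic dichotomy (preserves-a-line versus contains $\SL_2(E)$) precise for closed subgroups of $\SL_2(F)$, and checking that the two explicit unipotents can be arranged to lie in a common Iwahori fixing the chosen edge.
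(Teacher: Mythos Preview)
Your argument is correct and takes a genuinely different route from the paper. The paper's proof is entirely geometric: it invokes the equivariant harmonic map $u:\widetilde{\Sigma}\to T_F$ (whose existence for reductive $\rho$ is part of the paper's setup), observes that if the action were small and minimal then Skora's theorem would force the folding map $T_{q_u}\to T_{\min}$ to be an isometry, and reaches a contradiction because Proposition~\ref{prop_TqisZtree} shows the Hopf-differential tree $T_{q_u}$ has vertices of \emph{infinite} valence, whereas local compactness of $F$ makes $T_F$ (hence $T_{\min}$) a tree of \emph{finite} valence. The topologically dense case then follows because the action is edge-transitive.

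Your approach avoids harmonic maps and Skora entirely, trading them for the structure of closed subgroups of $\SL_2(F)$ and the elementary fact that non-abelian subgroups of surface groups contain $F_2$. This is more group-theoretic and in the irreducible case actually yields a stronger conclusion (every edge stabilizer contains $F_2$, not just one orbit), but the price is the Lie-theoretic dichotomy, which you correctly flag as the main technical point. One simplification: you do not need the full ``$H$ contains an open subgroup of $\SL_2(E)$'' statement. It suffices to show that $\mathfrak{h}=\mathrm{Lie}(H)$ is non-abelian in the non-elementary case, since then every open subgroup $H\cap I_e$ has the same non-abelian Lie algebra. And if $\mathfrak{h}$ were abelian it would lie in a single $F$-line $F\cdot X\subset\mathfrak{sl}_2(F)$ (centralizers in $\mathfrak{sl}_2$ are one-dimensional), whence $H$ normalizes $F\cdot X$; according as $X$ is nilpotent, split semisimple, or anisotropic semisimple, $H$ would fix an end, preserve an apartment, or be bounded---all excluded by your hypotheses. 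The paper's argument, by contrast, buys brevity and thematic unity with the rest of the article at the cost of invoking the harmonic-map machinery and Skora's theorem.
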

This follows from Skora’s theorem that a small and minimal action forces the folding map to be an isometry. 

The Hitchin section plays a fundamental role in the study
of character varieties and Teichm\"uller theory \cite{hitchin1987stable,hitchin1992lie,GuichardWienhard12,Labourie06}. In the non-Archimedean setting, it is natural to ask whether an analogue of the Hitchin
section exists, so that we can construct a representation into $\SL_2(F)$ from a Jenkins-Strebel differential.

On the other hand, further characterizations of $\mathrm{im}\Phi$ remains an interesting direction. We therefore propose the following:
\begin{question}
    Given $q\in\mathcal{JS}(\Sigma)$, does there exists $F$ and $\rho:\Gamma\to \mathrm{SL}_2(F)$ such that $\Phi(\rho)=q$? For a fixed $F$, what is $\mathrm{im}\Phi$ as a subset of $\mathcal{JS}(\Sigma)$? 
\end{question}

The recent construction of the real spectrum compactification of character varieties 
\cite{BurgerIozziParreauPozzetti_Currents2021,BurgerIozziParreauPozzettiWeyl2021,BurgerPozzetti2017}
is closely related to these questions, 
suggesting that the non-Archimedean Hitchin fibration may admit a canonical section 
reflecting the geometry of this compactification.

\begin{flushleft}
\textbf{Acknowledgements}: The authors would like to express their gratitude to Hongjie Yu, Bruno Klingler, and Richard Wentworth for many helpful discussions and valuable suggestions during the preparation of this work.
\end{flushleft}

\begin{flushleft}
\textbf{Notations.} 
Throughout this article, unless otherwise specified, $\Sigma$ denotes a compact Riemann surface of genus at least $2$. Its fundamental group is denoted by $\Gamma$, and $p:\tSigma\to\Sigma$ denotes the universal covering.
\end{flushleft}

\section{Preliminaries}
In this section we summarize the preliminary material required for our discussion. We review the notions of quadratic differentials, $\RR$-trees, and non-Archimedean valued fields, and recall the theory of equivariant harmonic maps into $\RR$-trees.

\subsection{Quadratic differentials}

\begin{definition}\label{def_quadraticdiff}
   A holomorphic quadratic differential $q$ on a Riemann surface $\Sigma$ is a global section of $\mathcal{K}_{\Sigma}^{\otimes2}$, where $\mathcal{K}_{\Sigma}$ is the canonical bundle of $\Sigma$. In a complex coordinate $z$ on $\Sigma$, $q$ has an expression $q(z)=\varphi(z)dz^2$, where $\varphi$ is a locally defined holomorphic function. 
\end{definition}


In this paper, whenever we refer to a quadratic differential, we mean a \emph{holomorphic} quadratic differential. A quadratic differential $q$ is called reducible if $q=\omega\otimes\omega$ for some holomorphic $1$-form $\omega$ on $\Sigma$.

A quadratic differential $q$ defines a measured foliation on $\Sigma$. The well-known correspondence between quadratic differentials and measured foliations can be found in \cite{hubbard1979quadratic,WolfRealizing}.

Indeed, consider the $2$-valued closed $1$-form $\Re\sqrt{q}$ on $\Sigma$. This defines a singular foliation $\F_q:=\ker\Re\sqrt{q}$, called the \emph{vertical foliation} of $q$. $\F_q$ is equipped with a \emph{transverse measure} $\mu_q:=|\Re\sqrt{q}|$. For an arc $\gamma$ on $\Sigma$, \[\mu_q(\gamma):=\int_{\gamma}|\langle\Re\sqrt{q},\dot{\gamma}(t)\rangle|dt\] is called the \emph{transverse length} of $\gamma$. Given two points in $\Sigma$ (or two leaves of $\F_q$), their \emph{transverse distance} is defined as the infimum of transverse lengths among arcs connecting them.

\subsection{$\RR$-trees}\label{Rtree}

In this subsection, we introduce basic notions concerning $\RR$-trees. A standard reference is \cite{cullermorgan87groupactions}. 

\begin{definition}
An $\RR$-tree is a metric space $(T,d)$ such that any two points $x,y\in T$ are joined by a unique simple path $[x,y]$, which is isometric to a closed interval in $\RR$. We call $[x,y]$ the geodesic segment between $x$ and $y$.
\end{definition}

\begin{definition}
A point $x\in T$ is an \emph{edge point} if $T\setminus\{x\}$ has exactly two connected components; otherwise $x$ is called a \emph{vertex point}. A geodesic ray is an isometric embedding $\gamma:\RR^+\to T$, and a geodesic line is an isometric embedding $\RR\to T$.

Two geodesic rays in $T$ are said to be equivalent if their intersection is another geodesic ray. An equivalent class of geodesic rays is called an \textbf{end} of $T$. The set of ends is called the boundary of $T$, denoted by $\partial T$.
\end{definition}

\begin{definition}
A $\ZZ$-tree (often called a simplicial tree) is an $\RR$-tree which is also a simplicial $1$-complex. If every vertex has finite valence we say that the tree is of finite valence.
\end{definition}

\begin{definition}\label{def:Gammatree}
     An $\RR$-tree $(T,d)$ with an isometric $\Gamma$-action is called a $\Gamma$-tree. A $\Gamma$-subtree of $T$ is a nonempty, connected $\Gamma$-invariant closed subspace of $T$. Every $\Gamma$-tree contains a unique minimal $\Gamma$-subtree, denoted by $T_{\min}$. And a minimal $\Gamma$-tree is a $\Gamma$-tree containing no proper $\Gamma$-subtrees, i.e., $T=T_{\min}$.
\end{definition}

Given a quadratic differential $q$ on $\Sigma$. Lift the vertical foliation $(\F_q,\mu_q)$ to a $\Gamma$-equivariant measured foliation $(\tilde{\F}_q,\tilde{\mu}_q)$ on the universal cover $\tSigma$. The leaf space $T_q$ of $(\tilde{\F}_q,\tilde{\mu}_q)$ is a quotient space of $\tSigma$ obtained by collapsing leaves to points. It is equipped with a metric $d$ induced by $\tilde{\mu}_q$, as well as a natural isometric $\Gamma$-action induced by the deck transformation. Indeed, $(T_q,d)$ is an $\Gamma$-tree. 

\subsubsection{The length function}\label{section_lengthfunction}

Given a $\Gamma$–tree $(T,d)$,
the associated \emph{length function} is defined by:
\[
\ell_T \colon \Gamma \longrightarrow \mathbb{R}_{\ge 0}, 
\qquad 
\ell_T(\gamma) := \inf_{x\in T} d_T\big(x, \gamma. x\big).
\]

We say that $\ell_T$ is \emph{abelian} if there exists a homomorphism $\mu:\Gamma\to \mathbb{R}$ such that
\[
\ell_T(\gamma) = \big|\mu(\gamma)\big|, \quad \forall \gamma\in\Gamma.
\]
Length functions play a fundamental role in characterizing isometric actions on $\mathbb{R}$–trees, as the following shows.

\begin{lemma}\textup{\cite[1.3]{cullermorgan87groupactions}}\label{lem_ellipticorhyperbolic}
    If $\ell_T(\gamma)=0$, then $\gamma$ has a fixed point in $T$. If $\ell_T(\gamma)\ne 0$, then there exists a geodesic line $\mathrm{Axis}(\gamma)\subset T$, called the axis of $\gamma$, such that \[\mathrm{Axis}(\gamma)=\{x\in T:d(x,\gamma.x)=\ell_T(\gamma)\},\] and $\gamma$ acts on $\mathrm{Axis}(\gamma)$ by translation.
\end{lemma}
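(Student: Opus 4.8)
The plan is to reduce everything to the geometry of a single orbit triple and the defining property of an $\RR$-tree that any three points $a,b,c$ have a unique median, namely the common point of the three segments $[a,b],[b,c],[a,c]$. Fix any $x\in T$ and set $d:=d(x,\gamma.x)$. If $d=0$ then $x$ is fixed and we are in the elliptic case, so assume $d>0$. The segments $[x,\gamma.x]$ and $\gamma.[x,\gamma.x]=[\gamma.x,\gamma^2.x]$ share the endpoint $\gamma.x$; let $c$ be the point where they diverge, so $[\gamma.x,x]\cap[\gamma.x,\gamma^2.x]=[\gamma.x,c]$ with $c\in[x,\gamma.x]$, and put $\Delta:=d(\gamma.x,c)\in[0,d]$. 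Applying $\gamma^{-1}$ shows symmetrically that $\gamma^{-1}.c$ is the divergence point of $[x,\gamma^{-1}.x]$ and $[x,\gamma.x]$, lying at distance $\Delta$ from $x$ on $[x,\gamma.x]$.

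The key computation is to evaluate the displacement $d(p,\gamma.p)$ as $p$ ranges over $[x,\gamma.x]$. Parametrizing that segment by arclength from $x$ and tracking where the overlap $[\gamma.x,c]$ sits, a direct tree calculation gives $d(p,\gamma.p)=d-2\Delta$ for every $p$ at position between $\Delta$ and $d-\Delta$; this single identity forces the dichotomy. If $\Delta\ge d/2$, the midpoint $q$ of $[x,\gamma.x]$ lies within (or beyond) the overlap region, and one checks directly that $\gamma.q=q$, so $\gamma$ is elliptic with $\ell_T(\gamma)=0$. If $\Delta<d/2$, then $[\gamma^{-1}.c,c]$ is non-degenerate and every one of its points is displaced by exactly $d-2\Delta>0$, which will be the translation length. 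Note this classification depends only on $\gamma$, since the two cases are mutually exclusive and a hyperbolic isometry (all displacements positive) admits no fixed point.

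In the hyperbolic case I would assemble the axis as $\mathrm{Axis}(\gamma):=\bigcup_{n\in\ZZ}\gamma^n.[\gamma^{-1}.c,c]$, verifying that consecutive pieces concatenate at the branch points without backtracking so that it is an isometrically embedded line on which $\gamma$ translates by $d-2\Delta$. To identify this line with $\{x:d(x,\gamma.x)=\ell_T(\gamma)\}$, I would use nearest-point projection onto the closed subtree $\mathrm{Axis}(\gamma)$: for $p$ at distance $s$ from the axis with projection $p'$, equivariance gives that $\gamma.p$ projects to $\gamma.p'$, and in a tree the path $p\to p'\to\gamma.p'\to\gamma.p$ is geodesic, whence $d(p,\gamma.p)=2s+(d-2\Delta)$. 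This shows the displacement is minimized exactly on the axis, that $\ell_T(\gamma)=d-2\Delta$ is attained, and in particular that $\ell_T(\gamma)>0$ precisely in this case. Combined with the previous paragraph this yields both implications: if $\ell_T(\gamma)=0$ we must be in the case $\Delta\ge d/2$ and a fixed point exists, while if $\ell_T(\gamma)\ne0$ we are hyperbolic and the axis exists with the stated description.

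The recurring technical input is the standard $\RR$-tree toolkit: existence and uniqueness of medians, the fact that two geodesics meeting transversally at a branch point concatenate to a geodesic, and the existence and $1$-Lipschitz behaviour of nearest-point projection onto a closed subtree. The main obstacle is entirely bookkeeping rather than conceptual: one must carefully justify the "no backtracking" assertions at $c$ and its translates, so that the displacement identity and the line $\mathrm{Axis}(\gamma)$ are genuinely geodesic. Once these tree-combinatorial verifications are in place, the elliptic/hyperbolic dichotomy and the description of the axis drop out of the single formula $d(p,\gamma.p)=d-2\Delta$.
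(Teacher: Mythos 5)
The paper does not actually prove this lemma---it is quoted verbatim from Culler--Morgan \cite[1.3]{cullermorgan87groupactions}---so the only meaningful comparison is with the standard argument in that reference, which your proposal reconstructs faithfully: the overlap point $c$ of $[\gamma.x,x]$ and $[\gamma.x,\gamma^2.x]$, the dichotomy $\Delta\ge d/2$ (midpoint fixed) versus $\Delta<d/2$ (the segment $[\gamma^{-1}.c,\,c]$ translated by $d-2\Delta$), the axis as $\bigcup_{n}\gamma^n.[\gamma^{-1}.c,\,c]$, and the projection formula $d(p,\gamma.p)=2\,d(p,\mathrm{Axis}(\gamma))+\ell_T(\gamma)$ are exactly the classical steps, and your computations (including the key identity $d(p,\gamma.p)=d(p,c)+d(\gamma^{-1}.c,p)=d-2\Delta$ on the translated segment, and the observation that the two cases are mutually exclusive properties of $\gamma$ itself, which also settles the non-attainment issue when $\ell_T(\gamma)=0$) all check out. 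Your proof is correct and takes essentially the same route as the cited source, modulo the standard $\RR$-tree facts (medians, non-backtracking concatenation, nearest-point projection onto a closed subtree) that you explicitly invoke.
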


Note that if the $\Gamma$-action on $T$ has a global fixed point, then $\ell_T\equiv 0$.

\begin{definition}
    An element $\gamma$ is called elliptic if $\ell_T(\gamma)= 0$ and hyperbolic if $\ell_T(\gamma)\ne 0$.
\end{definition}

\begin{lemma}\textup{\cite[Proposition 3.1]{cullermorgan87groupactions}}\label{lem_minimalsubtree}
    For a $\Gamma$-tree $T$ such that $\ell_T\equiv 0$, there exists a global fixed point of the $\Gamma$-action in $T$, and hence $T_{\min}$ is a single point. If $\ell_T$ is nonzero, then the minimal $\Gamma$-subtree $T_{\min}$ of $T$ is the union of axes of all hyperbolic elements in $\Gamma$, namely, $T_{\min}=\cup_{\ell_T(\gamma)\ne 0}\mathrm{Axis}(\gamma)$.
\end{lemma}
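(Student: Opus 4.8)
The plan is to prove the two cases of the dichotomy separately, using throughout the ping-pong combinatorics for isometries of trees together with Lemma~\ref{lem_ellipticorhyperbolic}. Suppose first that $\ell_T\equiv 0$, so that every $\gamma\in\Gamma$ is elliptic and has nonempty fixed-point set $\mathrm{Fix}(\gamma)$, which is a closed convex subtree. The key input is the standard ping-pong lemma for trees: if $g,h$ are elliptic and $\mathrm{Fix}(g)\cap\mathrm{Fix}(h)=\varnothing$, then $gh$ is hyperbolic, with $\ell_T(gh)=2\,d(\mathrm{Fix}(g),\mathrm{Fix}(h))>0$. Taking the contrapositive, for any two generators among a finite generating set $\gamma_1,\dots,\gamma_n$ of $\Gamma$ the elements $\gamma_i,\gamma_j,\gamma_i\gamma_j$ are all elliptic, so $\mathrm{Fix}(\gamma_i)\cap\mathrm{Fix}(\gamma_j)\neq\varnothing$. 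Since a tree has Helly number two, a finite family of pairwise-intersecting subtrees has a common point, whence $\bigcap_i\mathrm{Fix}(\gamma_i)\neq\varnothing$. Any point $x_0$ in this intersection is fixed by all generators, hence by $\Gamma$, so $\{x_0\}$ is a $\Gamma$-subtree and $T_{\min}=\{x_0\}$ by minimality.

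Now suppose $\ell_T$ is nonzero and set $Y:=\bigcup_{\ell_T(\gamma)\neq 0}\mathrm{Axis}(\gamma)$, nonempty by Lemma~\ref{lem_ellipticorhyperbolic}. I would first verify $Y\subseteq T_{\min}$: for any $x\in T_{\min}$ and any hyperbolic $\gamma$ with axis $A$ and projection $x'=\pi_A(x)$, convexity of $T_{\min}$ gives $[x,\gamma x]\subseteq T_{\min}$, and since $[x,\gamma x]$ contains the fundamental segment $[x',\gamma x']\subseteq A$ of length $\ell_T(\gamma)$, iterating by powers yields $A=\bigcup_{k\in\ZZ}[\gamma^k x',\gamma^{k+1}x']\subseteq T_{\min}$. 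The $\Gamma$-invariance of $Y$ is immediate from $g\cdot\mathrm{Axis}(\gamma)=\mathrm{Axis}(g\gamma g^{-1})$ together with the fact that conjugates of hyperbolic elements are hyperbolic.

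The heart of the matter is the connectedness of $Y$. Given two hyperbolic elements with axes $A_1,A_2$: if they intersect, $A_1\cup A_2$ is connected; if they are disjoint, let $[p,q]$ be the bridge realizing $d(A_1,A_2)$ with $p\in A_1$, $q\in A_2$. The relevant ping-pong statement is that, for a suitable choice of signs, the product $\gamma_1^{\pm1}\gamma_2^{\pm1}$ is hyperbolic with axis containing the bridge $[p,q]$; since this axis then meets $A_1$ at $p$ and $A_2$ at $q$, the three axes together form a connected set. Hence any two axes lie in a common connected subset of $Y$, so $Y$ is connected, and being a connected subset of an $\RR$-tree it is automatically geodesically convex.

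The main obstacle is upgrading the convex, connected, $\Gamma$-invariant set $Y$ to a \emph{closed} subtree equal to $T_{\min}$. At this stage minimality gives only $\overline{Y}=T_{\min}$, since $\overline{Y}$ is a closed $\Gamma$-subtree contained in $T_{\min}$; what remains is to show $Y$ is already closed, equivalently that every point of $T_{\min}$ lies on some axis. I expect this to be the delicate step: the plan is to analyze the germs of directions at a putative $x\in\overline{Y}\setminus Y$, using the bridge construction to produce, from two axes approaching $x$ along distinct directions, a single hyperbolic element whose axis passes through $x$, and separately to rule out that a minimal tree carrying a hyperbolic element has terminal (valence-one) points. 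Both reduce to the case analysis of configurations of axes and bridges isolated in \cite{cullermorgan87groupactions}.
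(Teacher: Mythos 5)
The paper itself contains no proof of this lemma: it is quoted directly from Culler--Morgan \cite[Proposition 3.1]{cullermorgan87groupactions}. Measured against that source's argument, your Part 1 (ping-pong for elliptic elements plus the Helly property of trees, using that $\Gamma$ is finitely generated, which holds here since $\Gamma$ is a surface group) and your steps (a)--(c) in Part 2 (the set $Y:=\bigcup_{\ell_T(\gamma)\neq 0}\mathrm{Axis}(\gamma)$ is $\Gamma$-invariant, is contained in every invariant subtree via the projection/iteration argument, and is connected via the bridge lemma for disjoint axes) are correct and are precisely the Culler--Morgan proof.

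The issue is your step (d). Under Culler--Morgan's convention an invariant subtree is any nonempty $\Gamma$-invariant convex subset, with \emph{no} closedness requirement; with that convention, (a)--(c) already finish the proof, since $Y$ is itself an invariant subtree contained in every invariant subtree, hence the unique minimal one. Your perceived gap is created by Definition~\ref{def:Gammatree} of this paper, which requires $\Gamma$-subtrees to be closed --- but under that convention the claim you propose to establish (that $Y$ is closed, equivalently that every point of the minimal \emph{closed} invariant subtree lies on an axis) is \emph{false} in general, even for surface groups, so no case analysis of axes and bridges can prove it. Concretely: let $T$ be the dual tree of an arational measured foliation on $\Sigma$. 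Every point of $T$ lies on some axis (every leaf is crossed by the axis of some $\gamma$, since fixed-point pairs of hyperbolic elements are dense in $\partial\mathbb{H}^2\times\partial\mathbb{H}^2$), and the axes form a countable family of closed nowhere dense subsets (branch points are dense), so by the Baire category theorem $T$ is not complete. Its metric completion $\widehat{T}$ is again a $\Gamma$-tree with the same length function and the same axes, so in $\widehat{T}$ the union of axes equals $T$: a dense, proper, non-closed invariant subtree, while the minimal closed invariant subtree is all of $\widehat{T}$. With the closed convention the correct statement is $T_{\min}=\overline{\bigcup_{\ell_T(\gamma)\neq 0}\mathrm{Axis}(\gamma)}$. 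So your proof is complete once you adopt the convention of the cited source (or insert the closure into the statement); the final step you flag should be deleted rather than filled.
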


\begin{proposition}\label{prop:length-function}\textup{\cite[Corollary 2.3 and Theorem 3.7]{cullermorgan87groupactions}} 
Let $T$ be a minimal $\Gamma$–tree with nontrivial length function $\ell_T$. The length function $\ell_T$ is nonabelian if and only if $\Gamma$ acts on $T$ without fixed ends. Moreover, if $T'$ is any other minimal $\Gamma$–tree with the same nonabelian length function as $T$, there exists a unique $\Gamma$-equivariant isometry $T \to T'$.
\end{proposition}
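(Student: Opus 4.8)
The plan is to treat the two assertions of the proposition separately: first the equivalence between nonabelianness of $\ell_T$ and the absence of $\Gamma$-fixed ends, and then the rigidity statement that a nonabelian length function determines its minimal tree up to a unique equivariant isometry.

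For the equivalence, I would first show that a fixed end forces the length function to be abelian. Suppose $\Gamma$ fixes an end $\xi\in\partial T$, and fix a geodesic ray $r$ representing $\xi$. For each $\gamma\in\Gamma$ the rays $r$ and $\gamma.r$ both converge to $\xi$, hence eventually coincide, and the signed displacement along this common sub-ray defines a Busemann cocycle $\mu(\gamma)\in\RR$. Because $\xi$ is $\Gamma$-invariant the cocycle identity collapses to the homomorphism property $\mu(\gamma\delta)=\mu(\gamma)+\mu(\delta)$. I would then verify $\ell_T(\gamma)=|\mu(\gamma)|$ case by case using Lemma \ref{lem_ellipticorhyperbolic}: if $\gamma$ is elliptic both sides vanish, while if $\gamma$ is hyperbolic the invariant end $\xi$ must be one of the two endpoints of $\mathrm{Axis}(\gamma)$, so $\gamma$ translates toward or away from $\xi$ by exactly $\ell_T(\gamma)$ and $|\mu(\gamma)|=\ell_T(\gamma)$. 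This exhibits $\ell_T$ as abelian. For the converse I argue contrapositively: assuming $\ell_T=|\mu|$ for a homomorphism $\mu$, I show all hyperbolic axes share a common end. The tool is the standard axis-combinatorics: if the axes of two hyperbolic elements $\gamma,\delta$ do not share a common end, then at least one of $\ell_T(\gamma\delta)$, $\ell_T(\gamma\delta^{-1})$ strictly exceeds $\ell_T(\gamma)+\ell_T(\delta)$, whereas abelianness forces both to be at most $|\mu(\gamma)\pm\mu(\delta)|\le\ell_T(\gamma)+\ell_T(\delta)$, a contradiction. Hence all axes overlap coherently and share a common end; by Lemma \ref{lem_minimalsubtree} the minimal tree is the union of these axes, so this common end is $\Gamma$-invariant.

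For the rigidity statement I would reconstruct the metric on $T$ from $\ell_T$ alone. The key observation is that for hyperbolic $\gamma,\delta$ the four numbers $\ell_T(\gamma),\ell_T(\delta),\ell_T(\gamma\delta),\ell_T(\gamma\delta^{-1})$ detect whether $\mathrm{Axis}(\gamma)$ and $\mathrm{Axis}(\delta)$ are disjoint, meet in a single point, or overlap in a segment, and in each case recover the gap distance or overlap length purely from $\ell_T$. Since the action is nonabelian, I would produce a pair $g,h$ of hyperbolic elements whose axes meet in a compact set, pick a base point $x_0$ in that intersection, and express $d(x_0,\gamma.x_0)$ for every $\gamma$ through these length-function formulas. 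This yields a $\Gamma$-equivariant isometry on the $\Gamma$-orbit of $x_0$, which extends to the union of axes, that is to $T_{\min}=T$ by Lemma \ref{lem_minimalsubtree}; applying the identical recipe to $T'$ produces the matching map and hence the desired equivariant isometry $T\to T'$. Uniqueness follows because any $\Gamma$-equivariant isometry must carry $\mathrm{Axis}(\gamma)$ to $\mathrm{Axis}(\gamma)$ preserving translation length and direction for every hyperbolic $\gamma$; since $T_{\min}$ is the union of these axes and, by the first part, nonabelianness rules out a common fixed end along which one could slide, the map is completely pinned down.

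The main obstacle I anticipate is the rigidity half. Establishing the precise axis-intersection formulas and carefully handling the degenerate configurations (axes disjoint, tangent at a point, or overlapping with coherent versus incoherent orientation) is delicate, as is verifying that the correspondence reconstructed on orbits genuinely extends to a global isometry rather than merely matching orbit distances. Securing a pair of hyperbolic elements with bounded axis intersection---precisely the place where nonabelianness is used---is the crucial structural input, and I expect the bulk of the work to lie there.
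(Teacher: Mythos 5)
The paper offers no proof of this proposition---it is quoted verbatim from Culler--Morgan---so your proposal can only be judged on its own terms. Your Part 1 (a fixed end yields a Busemann homomorphism $\mu$ with $\ell_T=|\mu|$) is correct. The converse direction, however, hinges on a lemma that is false as stated: you claim that if $\mathrm{Axis}(\gamma)$ and $\mathrm{Axis}(\delta)$ share no end, then at least one of $\ell_T(\gamma\delta)$, $\ell_T(\gamma\delta^{-1})$ strictly exceeds $\ell_T(\gamma)+\ell_T(\delta)$. That strict excess occurs only when the axes are \emph{disjoint}. If the two axes meet in a single point, then \emph{both} products have translation length exactly $\ell_T(\gamma)+\ell_T(\delta)$; if they meet in a finite segment of length $\Delta>0$, the coherently oriented product has length exactly $\ell_T(\gamma)+\ell_T(\delta)$ and the incoherent one has length at most $\ell_T(\gamma)+\ell_T(\delta)$. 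In all of these configurations the axes share no end, yet your inequality fails, so the contradiction you want to derive from ``abelianness forces both products $\le \ell_T(\gamma)+\ell_T(\delta)$'' never materializes; indeed that consequence of abelianness is satisfied in exactly these configurations, so it is too weak to be of use.

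The repair needs two ingredients absent from your sketch. First, use the full strength of abelianness: since $\ell_T(\gamma\delta)=|\mu(\gamma)+\mu(\delta)|$ and $\ell_T(\gamma\delta^{-1})=|\mu(\gamma)-\mu(\delta)|$, one of the two products must have length equal to the \emph{difference} $|\ell_T(\gamma)-\ell_T(\delta)|$, not merely at most the sum. This rules out disjoint axes and axes meeting at a point, and in the finite-overlap case it forces the overlap to be coherently oriented of length exactly $\min\{\ell_T(\gamma),\ell_T(\delta)\}$---note that for a \emph{single} pair this is perfectly consistent, so no contradiction is available at this stage. Second, and this is the key missing idea, apply the same analysis to the powers $\gamma^n,\delta^n$, whose axes coincide with those of $\gamma,\delta$: the overlap must then be at least $\min\{n\ell_T(\gamma),n\ell_T(\delta)\}$ for every $n$, hence infinite, so any two axes share a ray, and coherence of orientation pins down a single common end; one then checks this end is also fixed by elliptic elements (conjugate a hyperbolic element by an elliptic one and use coherence again). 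Your Part 3 has the right flavor---recovering distances from translation lengths---but it likewise rests on unproved assertions, notably the existence, from nonabelianness, of two hyperbolic elements whose axes meet in a compact set, and the recovery of $d(x_0,\gamma.x_0)$ from $\ell_T$-data; you flag these honestly, but they constitute the bulk of Culler--Morgan's Theorem 3.7 rather than routine verifications.
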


\subsection{Bruhat--Tits trees}\label{sec:1.3}

In this subsection, we recall some background on Bruhat-Tits trees for $\SL_2(F)$, where $F$ is a non-Archimedean valued field. Standard references are \cite{serre80trees, bruhattits72}. We begin with a brief review of non-Archimedean valued fields.

\subsubsection{Non-Archimedean valued fields}
A field $F$ is called a \emph{non-Archimedean valued field} if it is equipped with a discrete valuation $v:F^{\times}\to\ZZ$, extended by $v(0)=\infty$, which defines a
non-Archimedean absolute value $|-|$ on $F$. This absolute value takes the form $|-|=c^{v(-)}$ for some $c\in (0,1)$, and satisfies the \emph{ultrametric inequality}:
\[
|x+y|\leq \max\{|x|,|y|\},\qquad \forall x,y\in F.
\]
The associated valuation ring and maximal ideal are
\[
\mathcal O_v=\{x\in F: v(x)\geq 0\}\text{ and  } 
\mathfrak m_v=\{x\in F: v(x)>0\}.
\]
There is an element $\pi\in \mathcal{O}_v$, called a \emph{uniformizer} of $F$, such that $v(\pi)=1$ and $\mathfrak{m}_F=(\pi)$. Denote the \emph{residue field} by $k_v:=\mathcal O_v/\mathfrak m_v$. A non-Archimedean valued field is said to be locally compact if is complete respect to $|-|$ and with residue field $k_v$ finite. If $F$ is locally compact, then it is isomorphic to either a finite extensions of the $p$-adic field $\mathbb Q_p$ (characteristic zero) or Laurent series fields $\mathbb F_{p^r}((t))$ (characteristic $p>0$). 

Throughout this article, unless otherwise specified, $F$ always denotes a non-Archimedean valued field of characteristic zero; it is not assumed to be locally compact or algebraically closed.

\subsubsection{The Bruhat--Tits tree $T_F$ of $\mathrm{SL}_2(F)$}
The Bruhat–Tits tree $T_F$ for $\SL_2(F)$ is a simply connected simplicial complex, serving as the natural non-Archimedean analogue of the symmetric space $\SL_2(\mathbb{R})/\SO(2)$. We briefly recall its construction following \cite{serre80trees}.

A vertex of $T_F$ corresponds to a \emph{homothety} class of $\mathcal O_v$-lattices in $F^2$. 
Two lattices $\Lambda,\Lambda'\subset F^2$ are said to lie in the same homothety class if $\Lambda'=\lambda \Lambda$ for some $\lambda\in F^\times$. Two vertices $[\Lambda]$ and $[\Lambda']$ are joined by an edge if there exist representatives with 
$\pi\Lambda \subsetneq \Lambda' \subsetneq \Lambda$, equivalently if $\Lambda/\Lambda'\cong k_v$. 

For any vertex $[\Lambda]$, the set of vertices adjacent to $[\Lambda]$, i.e., connected by an edge—is in bijection with  
the projective line $\mathbb{P}^1(k_v)$ over the residue field $k_v$.

In particular, when $k_v$ is finite, each vertex has valency $|k_v| + 1$,  
and hence $T_F$ is a $(|k_v| + 1)$-regular tree.

We equip $T_F$ with the path metric in which each edge has length $1$. With this metric, $T_F$ is an example of a $\ZZ$-tree; hence, we will freely use the concepts introduced in Section~\ref{Rtree} for $T_F$ in what follows.

\subsubsection{The $\SL_2(F)$-action on $T_F$}\label{sec_SL2Faction}

We now describe the natural $\SL_2(F)$-action on $T_F$. The group $\SL_2(F)$ acts on $T_F$ by change of basis, sending a homothety of $\mathcal{O}_v$-lattices to another. This action is isometric.

There are exactly two orbits of vertices under this action, and the distance between any two vertices in the same orbit is always even. Indeed, $\SL_2(F)$ acts transitively on the set of edges. See Figure \ref{fig_F2Q3}.

We now describe the stabilizers of various subsets of $T_F$.  
The stabilizer $\mathrm{Stab}([\Lambda])$ of a vertex $[\Lambda]$ is a maximal bounded subgroup of $\SL_2(F)$, conjugate to $\SL_2(\mathcal O_v)$.  
The stabilizer of an edge $e$ is an Iwahori subgroup, conjugate to
\[
\Bigg\{
\begin{pmatrix}a&b\\ c&d\end{pmatrix}\in \SL_2(\mathcal O_v)\;\Big|\; c\in \pi\mathcal O_v
\Bigg\}.
\]
The stabilizer of an end is a Borel subgroup, conjugate to the subgroup of upper triangular matrices in $\SL_2(F)$, while the stabilizer of a geodesic line is a Cartan subgroup, conjugate to the subgroup of diagonal matrices.

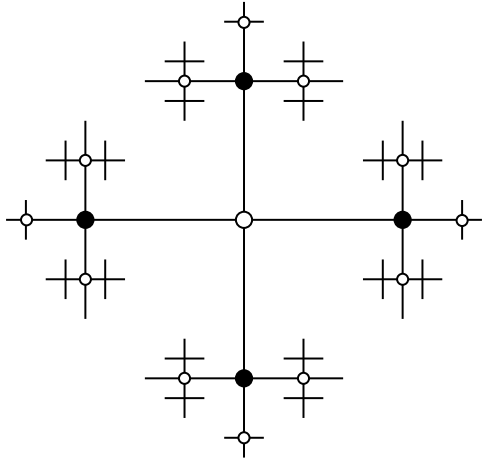
\begin{figure}[!h]
    \centering
\tikzset{every picture/.style={line width=0.75pt}} 

\begin{tikzpicture}[x=0.75pt,y=0.75pt,yscale=-1,xscale=1]

\draw    (150,150) -- (390,150) ;
\draw    (270,40) -- (270,270) ;
\draw    (190,100) -- (190,200) ;
\draw    (350,100) -- (350,200) ;
\draw    (220,80) -- (320,80) ;
\draw    (220,230) -- (320,230) ;
\draw    (170,120) -- (210,120) ;
\draw    (170,180) -- (210,180) ;
\draw    (330,120) -- (370,120) ;
\draw    (330,180) -- (370,180) ;
\draw    (240,60) -- (240,100) ;
\draw    (300,60) -- (300,100) ;
\draw    (240,210) -- (240,250) ;
\draw    (300,210) -- (300,250) ;
\draw    (160,140) -- (160,160) ;
\draw    (380,140) -- (380,160) ;
\draw    (260,50) -- (280,50) ;
\draw    (260,260) -- (280,260) ;
\draw    (180,110) -- (180,130) ;
\draw    (200,110) -- (200,130) ;
\draw    (340,110) -- (340,130) ;
\draw    (360,110) -- (360,119.83) -- (360,130) ;
\draw    (180,170) -- (180,190) ;
\draw    (200,170) -- (200,190) ;
\draw    (340,170) -- (340,190) ;
\draw    (360,170) -- (360,190) ;
\draw    (230,220) -- (250,220) ;
\draw    (290,220) -- (310,220) ;
\draw    (290,70) -- (310,70) ;
\draw    (290,90) -- (310,90) ;
\draw    (230,90) -- (250,90) ;
\draw    (230,70) -- (250,70) ;
\draw    (230,240) -- (250,240) ;
\draw    (290,240) -- (310,240) ;
\draw  [fill={rgb, 255:red, 255; green, 255; blue, 255 }  ,fill opacity=1 ] (265.92,150) .. controls (265.92,147.74) and (267.74,145.92) .. (270,145.92) .. controls (272.26,145.92) and (274.08,147.74) .. (274.08,150) .. controls (274.08,152.26) and (272.26,154.08) .. (270,154.08) .. controls (267.74,154.08) and (265.92,152.26) .. (265.92,150) -- cycle ;
\draw  [fill={rgb, 255:red, 0; green, 0; blue, 0 }  ,fill opacity=1 ] (265.92,80) .. controls (265.92,77.74) and (267.74,75.92) .. (270,75.92) .. controls (272.26,75.92) and (274.08,77.74) .. (274.08,80) .. controls (274.08,82.26) and (272.26,84.08) .. (270,84.08) .. controls (267.74,84.08) and (265.92,82.26) .. (265.92,80) -- cycle ;
\draw  [fill={rgb, 255:red, 0; green, 0; blue, 0 }  ,fill opacity=1 ] (185.92,150) .. controls (185.92,147.74) and (187.74,145.92) .. (190,145.92) .. controls (192.26,145.92) and (194.08,147.74) .. (194.08,150) .. controls (194.08,152.26) and (192.26,154.08) .. (190,154.08) .. controls (187.74,154.08) and (185.92,152.26) .. (185.92,150) -- cycle ;
\draw  [fill={rgb, 255:red, 0; green, 0; blue, 0 }  ,fill opacity=1 ] (345.92,150) .. controls (345.92,147.74) and (347.74,145.92) .. (350,145.92) .. controls (352.26,145.92) and (354.08,147.74) .. (354.08,150) .. controls (354.08,152.26) and (352.26,154.08) .. (350,154.08) .. controls (347.74,154.08) and (345.92,152.26) .. (345.92,150) -- cycle ;
\draw  [fill={rgb, 255:red, 0; green, 0; blue, 0 }  ,fill opacity=1 ] (265.92,230) .. controls (265.92,227.74) and (267.74,225.92) .. (270,225.92) .. controls (272.26,225.92) and (274.08,227.74) .. (274.08,230) .. controls (274.08,232.26) and (272.26,234.08) .. (270,234.08) .. controls (267.74,234.08) and (265.92,232.26) .. (265.92,230) -- cycle ;
\draw  [fill={rgb, 255:red, 255; green, 255; blue, 255 }  ,fill opacity=1 ] (347.21,120) .. controls (347.21,118.46) and (348.46,117.21) .. (350,117.21) .. controls (351.54,117.21) and (352.79,118.46) .. (352.79,120) .. controls (352.79,121.54) and (351.54,122.79) .. (350,122.79) .. controls (348.46,122.79) and (347.21,121.54) .. (347.21,120) -- cycle ;
\draw  [fill={rgb, 255:red, 255; green, 255; blue, 255 }  ,fill opacity=1 ] (347.21,180) .. controls (347.21,178.46) and (348.46,177.21) .. (350,177.21) .. controls (351.54,177.21) and (352.79,178.46) .. (352.79,180) .. controls (352.79,181.54) and (351.54,182.79) .. (350,182.79) .. controls (348.46,182.79) and (347.21,181.54) .. (347.21,180) -- cycle ;
\draw  [fill={rgb, 255:red, 255; green, 255; blue, 255 }  ,fill opacity=1 ] (297.21,230) .. controls (297.21,228.46) and (298.46,227.21) .. (300,227.21) .. controls (301.54,227.21) and (302.79,228.46) .. (302.79,230) .. controls (302.79,231.54) and (301.54,232.79) .. (300,232.79) .. controls (298.46,232.79) and (297.21,231.54) .. (297.21,230) -- cycle ;
\draw  [fill={rgb, 255:red, 255; green, 255; blue, 255 }  ,fill opacity=1 ] (237.21,230) .. controls (237.21,228.46) and (238.46,227.21) .. (240,227.21) .. controls (241.54,227.21) and (242.79,228.46) .. (242.79,230) .. controls (242.79,231.54) and (241.54,232.79) .. (240,232.79) .. controls (238.46,232.79) and (237.21,231.54) .. (237.21,230) -- cycle ;
\draw  [fill={rgb, 255:red, 255; green, 255; blue, 255 }  ,fill opacity=1 ] (187.21,120) .. controls (187.21,118.46) and (188.46,117.21) .. (190,117.21) .. controls (191.54,117.21) and (192.79,118.46) .. (192.79,120) .. controls (192.79,121.54) and (191.54,122.79) .. (190,122.79) .. controls (188.46,122.79) and (187.21,121.54) .. (187.21,120) -- cycle ;
\draw  [fill={rgb, 255:red, 255; green, 255; blue, 255 }  ,fill opacity=1 ] (187.21,180) .. controls (187.21,178.46) and (188.46,177.21) .. (190,177.21) .. controls (191.54,177.21) and (192.79,178.46) .. (192.79,180) .. controls (192.79,181.54) and (191.54,182.79) .. (190,182.79) .. controls (188.46,182.79) and (187.21,181.54) .. (187.21,180) -- cycle ;
\draw  [fill={rgb, 255:red, 255; green, 255; blue, 255 }  ,fill opacity=1 ] (297.21,80) .. controls (297.21,78.46) and (298.46,77.21) .. (300,77.21) .. controls (301.54,77.21) and (302.79,78.46) .. (302.79,80) .. controls (302.79,81.54) and (301.54,82.79) .. (300,82.79) .. controls (298.46,82.79) and (297.21,81.54) .. (297.21,80) -- cycle ;
\draw  [fill={rgb, 255:red, 255; green, 255; blue, 255 }  ,fill opacity=1 ] (237.21,80) .. controls (237.21,78.46) and (238.46,77.21) .. (240,77.21) .. controls (241.54,77.21) and (242.79,78.46) .. (242.79,80) .. controls (242.79,81.54) and (241.54,82.79) .. (240,82.79) .. controls (238.46,82.79) and (237.21,81.54) .. (237.21,80) -- cycle ;
\draw  [fill={rgb, 255:red, 255; green, 255; blue, 255 }  ,fill opacity=1 ] (267.21,50.33) .. controls (267.21,48.79) and (268.46,47.54) .. (270,47.54) .. controls (271.54,47.54) and (272.79,48.79) .. (272.79,50.33) .. controls (272.79,51.88) and (271.54,53.13) .. (270,53.13) .. controls (268.46,53.13) and (267.21,51.88) .. (267.21,50.33) -- cycle ;
\draw  [fill={rgb, 255:red, 255; green, 255; blue, 255 }  ,fill opacity=1 ] (157.54,150) .. controls (157.54,148.46) and (158.79,147.21) .. (160.33,147.21) .. controls (161.88,147.21) and (163.13,148.46) .. (163.13,150) .. controls (163.13,151.54) and (161.88,152.79) .. (160.33,152.79) .. controls (158.79,152.79) and (157.54,151.54) .. (157.54,150) -- cycle ;
\draw  [fill={rgb, 255:red, 255; green, 255; blue, 255 }  ,fill opacity=1 ] (267.21,260) .. controls (267.21,258.46) and (268.46,257.21) .. (270,257.21) .. controls (271.54,257.21) and (272.79,258.46) .. (272.79,260) .. controls (272.79,261.54) and (271.54,262.79) .. (270,262.79) .. controls (268.46,262.79) and (267.21,261.54) .. (267.21,260) -- cycle ;
\draw  [fill={rgb, 255:red, 255; green, 255; blue, 255 }  ,fill opacity=1 ] (377.21,150.33) .. controls (377.21,148.79) and (378.46,147.54) .. (380,147.54) .. controls (381.54,147.54) and (382.79,148.79) .. (382.79,150.33) .. controls (382.79,151.88) and (381.54,153.13) .. (380,153.13) .. controls (378.46,153.13) and (377.21,151.88) .. (377.21,150.33) -- cycle ;

\end{tikzpicture}

    \caption{Two-level Bruhat--Tits tree for $\mathrm{SL}_2(\mathbb{Q}_3)$ (each vertex has $4$ neighbors).}
    \label{fig_F2Q3}
\end{figure}

The boundary $\partial T_F$ (the set of ends) of $T_F$ can be naturally identified with the projective line $\mathbb{P}^1(F)$ over the valued field $F$. The $\SL_2(F)$-action on $T_F$ induces an action on the boundary $\partial T_F$, which coincides with the standard $\SL_2(F)$-action on $\mathbb{P}^1(F)$. For details, see \cite[Chapter II.1.1]{serre80trees} or \cite[Appendix B.4.2]{Kahlergroups_Py}.
\begin{lemma}\label{lem_eigen}
For any $A\in\SL_2(F)$, the following are equivalent:
\begin{itemize}
    \item [\emph{(i)}] $A$ fixes an end of $T_F$;
    \item [\emph{(ii)}] $A$ has a fixed point in $\mathbb{P}^1(F)$;
    \item [\emph{(iii)}] $A$ admits an eigenvector in $F^2$.
\end{itemize}
\end{lemma}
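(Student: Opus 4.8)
The plan is to reduce the entire statement to the equivariant identification $\partial T_F\cong\mathbb{P}^1(F)$ recalled immediately above the lemma, together with the elementary dictionary between points of $\mathbb{P}^1(F)$ and eigenlines in $F^2$. I would organize the argument as the chain of equivalences (i)$\Leftrightarrow$(ii)$\Leftrightarrow$(iii), each of which is short.

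First I would handle (i)$\Leftrightarrow$(ii). The crucial input is that the set of ends $\partial T_F$ is in $\SL_2(F)$-equivariant bijection with $\mathbb{P}^1(F)$, where the action on $\partial T_F$ is induced by the isometric action on $T_F$ and the action on $\mathbb{P}^1(F)$ is the standard one; this is precisely the statement recalled before the lemma (see \cite[Chapter II.1.1]{serre80trees} and \cite[Appendix B.4.2]{Kahlergroups_Py}). Granting this, $A$ fixes an end $\xi\in\partial T_F$ if and only if it fixes the corresponding point of $\mathbb{P}^1(F)$ under the bijection, so (i) and (ii) coincide.

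Next I would treat (ii)$\Leftrightarrow$(iii) by unwinding definitions. A point of $\mathbb{P}^1(F)$ is a line $L=F\cdot v\subset F^2$ with $v\neq 0$, and the standard action sends $L$ to $AL=F\cdot(Av)$. Thus $[v]$ is fixed precisely when $Av\in F\cdot v$, i.e. $Av=\lambda v$ for some $\lambda\in F$; since $A$ is invertible one automatically has $\lambda\neq 0$, so this says exactly that $v$ is an eigenvector of $A$. The converse is immediate: any nonzero eigenvector $v$ determines a fixed line $[v]\in\mathbb{P}^1(F)$. This gives (ii)$\Leftrightarrow$(iii) and completes the proof.

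I do not expect a genuine obstacle here: the statement is essentially a dictionary, and all of its content is carried by the equivariance of the boundary identification $\partial T_F\cong\mathbb{P}^1(F)$. The one point I would state explicitly is that this bijection intertwines the two actions---it is this equivariance, and not merely the set-theoretic bijection, that identifies ``fixing an end'' with ``fixing a projective point''. If one preferred a self-contained argument avoiding $\mathbb{P}^1(F)$ as a black box, the same dictionary can be produced directly: an end of $T_F$ is represented by a descending chain of lattices that singles out a unique line $L\subset F^2$, and $A$ preserves that end exactly when it preserves $L$, which again recovers (iii).
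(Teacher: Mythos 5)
Your proposal is correct and matches the paper's treatment: the paper states this lemma without a separate proof, regarding it as an immediate consequence of the $\SL_2(F)$-equivariant identification $\partial T_F\cong\mathbb{P}^1(F)$ recalled just beforehand (citing Serre and Py), which is exactly the reduction you make, with (ii)$\Leftrightarrow$(iii) being the same elementary eigenline dictionary. You correctly flag that the equivariance of the bijection, not just the bijection itself, is the point that carries the content.
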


\begin{lemma}\textup{\cite[Proposition II.3.15]{morganshalen1984valuationstrees}}\label{lem_length=logtrace}
    For any $A\in\SL_2(F)$, we have 
    \begin{align}\label{formula_length=logtrace}
        \ell_{T_F}(A)=-2\min\{0,v(\trace A)\}.
    \end{align}
\end{lemma}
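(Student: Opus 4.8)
The plan is to convert the geometric translation length into a matrix optimization problem over the vertices of $T_F$, and then to solve that problem using the rational canonical form of $A$, thereby avoiding any appeal to eigenvalues (which may not lie in $F$, as $F$ is neither complete nor algebraically closed).

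First I would establish an explicit distance formula. Fix a vertex $[\Lambda]$ together with an $\mathcal{O}_v$-basis of $\Lambda$, and let $B$ be the matrix of $A$ in this basis. Writing the Smith normal form $B = U\,\diag(\pi^{a_1},\pi^{a_2})\,V$ with $U,V\in\mathrm{GL}_2(\mathcal{O}_v)$ and $a_1\le a_2$, and using that $U,V$ preserve $\Lambda$, one finds that $d([\Lambda],[A\Lambda])$ equals $d([\Lambda],[\diag(\pi^{a_1},\pi^{a_2})\Lambda]) = a_2-a_1$, measured in the standard apartment. Since $a_1=\min_{i,j}v(B_{ij})$ is the content of $B$, while $a_1+a_2=v(\det A)=0$, this yields
\[
d\big([\Lambda],[A\Lambda]\big)=-2\min_{i,j}v(B_{ij}).
\]

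Next I would reduce the length to an optimization. Because every vertex is of the form $[g\Lambda_0]$ and changing the basis of $\Lambda$ conjugates $A$, and because $\SL_2(F)$ preserves the two vertex-orbits and therefore acts without inversions (so elliptic elements fix a vertex, and in the hyperbolic case the axis passes through vertices), the infimum defining $\ell_{T_F}$ is attained at a vertex, and
\[
\ell_{T_F}(A)=-2\sup_{g\in\SL_2(F)}\ \min_{i,j}v\big((g^{-1}Ag)_{ij}\big).
\]
For the upper bound on the inner quantity I would note that a determinant-one matrix cannot have all entries in $\mathfrak{m}_v$ (else $v(\det)>0$), so $\min_{i,j}v(B_{ij})\le 0$; moreover $\min_{i,j}v(B_{ij})\le\min\{v(B_{11}),v(B_{22})\}\le v(\trace B)=v(\trace A)$ by the ultrametric inequality. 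Hence the supremum is at most $\min\{0,v(\trace A)\}$, which already gives $\ell_{T_F}(A)\ge -2\min\{0,v(\trace A)\}$.

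For the matching bound I would exhibit an optimal vertex directly. If $A$ is non-scalar it is cyclic, hence conjugate over $F$ to its companion matrix $C=\left(\begin{smallmatrix}0&-1\\1&\trace A\end{smallmatrix}\right)$; the crucial point is that this holds over $F$ itself, with no completeness or splitting hypothesis. For $C$ one computes $\min_{i,j}v(C_{ij})=\min\{0,v(\trace A)\}$ exactly, so the supremum above is attained and the two bounds coincide, proving the formula; the scalar case $A=\pm I$ is immediate since then $\ell_{T_F}(A)=0$ and $v(\trace A)=v(\pm 2)\ge 0$. The only genuinely delicate step is the reduction in the previous paragraph: verifying that the vertex-wise minimum really computes the translation length, i.e. the absence of inversions together with the presence of an axis vertex in the hyperbolic case. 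Once this is granted, the companion matrix simultaneously furnishes the optimizer and, when $v(\trace A)\ge 0$, exhibits the $A$-invariant lattice $\mathcal{O}_v v_0\oplus\mathcal{O}_v Av_0$ (with $v_0$ a cyclic vector), thereby recovering the elliptic/hyperbolic dichotomy in a uniform manner.
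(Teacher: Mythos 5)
Your strategy is the standard lattice-theoretic argument and is genuinely different from the paper, which gives no proof at all but simply cites Morgan--Shalen; it is also self-contained and avoids eigenvalues, a real advantage here since $F$ is neither complete nor algebraically closed. The Smith-normal-form displacement formula $d([\Lambda],[A\Lambda])=-2\min_{i,j}v(B_{ij})$, the upper bound $\min_{i,j}v(B_{ij})\le\min\{0,v(\trace A)\}$ coming from $\det B=1$ and the ultrametric inequality, and the companion-matrix optimizer are all correct.

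There is, however, one step that fails as written: the reduction
\begin{equation*}
\ell_{T_F}(A)=-2\sup_{g\in\SL_2(F)}\ \min_{i,j}v\big((g^{-1}Ag)_{ij}\big)
\end{equation*}
is false, because the vertices of $T_F$ are \emph{not} all of the form $[g\Lambda_0]$ with $g\in\SL_2(F)$: as the paper records in its discussion of the $\SL_2(F)$-action (and as you yourself use for the no-inversion argument), $\SL_2(F)$ has two orbits of vertices, and the right-hand side above only scans the orbit of $[\Lambda_0]$. This genuinely matters. Take $F=\mathbb{Q}_3$, $\pi=3$, and $A=\begin{pmatrix}0&-\pi^{-1}\\ \pi&0\end{pmatrix}$. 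Then $\trace A=0$, so the lemma asserts $\ell_{T_F}(A)=0$, and indeed $A$ fixes the vertex $[\Lambda]$ with $\Lambda=\mathcal{O}_v e_1\oplus\mathcal{O}_v\pi e_2$, which lies in the orbit \emph{not} containing $[\Lambda_0]$. But the reduction of $A$ modulo $\pi$ acts on the neighbors of $[\Lambda]$, i.e.\ on $\mathbb{P}^1(\mathbb{F}_3)$, by $[x:y]\mapsto[-y:x]$, which has no fixed point since $-1$ is not a square mod $3$; hence the fixed subtree of $A$ is exactly $\{[\Lambda]\}$, every vertex in the $\SL_2(F)$-orbit of $[\Lambda_0]$ is moved an (even, nonzero) distance at least $2$, and in fact $d([\Lambda_0],[A\Lambda_0])=2$. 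So your displayed supremum equals $-1$ and your formula outputs $2$, not $0$. The same issue makes your last step internally inconsistent: conjugating $A$ into companion form is a $\mathrm{GL}_2(F)$-conjugation, and in this example it provably cannot be done inside $\SL_2(F)$, since an $\SL_2(F)$-conjugate of the companion matrix fixes a vertex in the orbit of $[\Lambda_0]$. The repair is immediate and costs nothing: every vertex of $T_F$ is $[g\Lambda_0]$ with $g\in\mathrm{GL}_2(F)$, and $g^{-1}Ag$ still has determinant one and trace $\trace A$, so the optimization should be stated over $g\in\mathrm{GL}_2(F)$; with that single change all three of your steps go through verbatim and the proof is complete.
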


By Lemma \ref{lem_ellipticorhyperbolic}, if $A$ is elliptic, then $A$ fixes some vertex $[\Lambda]\in T_F$, and hence conjugate to a matrix in $\SL_2(\mathcal{O}_v)$. If $A\in\SL_2(F)$ is hyperbolic, then $A$ preserves a geodesic line in $T_F$. Therefore, by Lemma \ref{lem_eigen}, $A$ is diagonalizable and conjugate to $\begin{pmatrix}a&0\\ 0&a^{-1}\end{pmatrix}$ for some $a\in F^{\times}$. The length of $A$ is $-2v(a+a^{-1})=-2v(\trace A)$.

\subsection{Harmonic maps into $\RR$-trees}\label{sec_harmonicmaps}

In this subsection, we define equivariant harmonic maps from the universal cover $\tSigma$ of a compact Riemann surface $\Sigma$ into $\RR$-trees, and present several relations between the harmonic map and the length function associated to the $\RR$-tree.

\subsubsection{Equivariant harmonic maps into $\RR$-trees}

 Let $T$ be an $\RR$-tree with an isometric action of $\Gamma$, i.e. a $\Gamma$-tree. A map $u:\tSigma\to T$ is called \emph{$\Gamma$-equivariant} if
\[
u(\gamma. x) = \gamma. u(x), \qquad \forall \gamma\in\Gamma, \; x\in\tSigma.
\]

Following \cite{korevaar1993sobolev}, the energy density of $u\in W^{1,2}_{\mathrm{loc}}(\tSigma,T)$ is defined by
\[
e(u)(x) = \limsup_{r\to 0} \frac{1}{r^2} \fint_{B_r(x)} d\!\big(u(y),u(x)\big)^2\,dy,
\]
and the total energy is $E(u)=\int_\Sigma e(u)\,d\mathrm{vol}_g$. A $\Gamma$-equivariant map is called \emph{harmonic} if it minimizes energy under compactly supported $\Gamma$-equivariant variations.

A point $x\in\tSigma$ is called \emph{regular} if $u$ maps a neighborhood of $x$ into a geodesic segment of $T$; otherwise $x$ is called \emph{singular}. By \cite{GromovSchoen1992,sun2003regularity}, the singular set has Hausdorff dimension zero.

Associated with the harmonic map $u$ is a $\Gamma$-equivariant quadratic differential $4\partial u\otimes\partial u$ on $\tSigma$. In local coordinates $z$, it can be written as $4(u_z)^2\,dz^2$. This descends to a quadratic differential $q_u$ on $\Sigma$, known as \emph{the Hopf differential} of $u$. 

\begin{theorem}\textup{\cite[Theorem 4.4]{daskalopoulos1998character}}\label{thm_minimal}
	Let $\Sigma$ be a compact Riemann surface, $u:\tSigma\to T$ be a $\Gamma$-equivariant harmonic map. The image of $u$ is the minimal $\Gamma$-subtree $T_{\min}$ of $T$.
\end{theorem}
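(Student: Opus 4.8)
The plan is to prove the two inclusions $u(\tSigma)\subseteq T_{\min}$ and $T_{\min}\subseteq \overline{u(\tSigma)}$ separately, and then upgrade them to an equality. The second inclusion is essentially formal. Since $u$ is $\Gamma$-equivariant, its image is $\Gamma$-invariant, and since $\tSigma$ is connected and $u$ is continuous, $u(\tSigma)$ is connected; in an $\RR$-tree a connected subset is automatically convex (if a point on the geodesic between two image points were missing, removing it would disconnect the image). Hence the closure $\overline{u(\tSigma)}$ is a closed, connected, $\Gamma$-invariant subset, i.e.\ a $\Gamma$-subtree, and by the defining property of the minimal subtree (Lemma \ref{lem_minimalsubtree}) we get $T_{\min}\subseteq \overline{u(\tSigma)}$.

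The substance is the reverse inclusion, and here I would exploit convexity of the target together with energy minimality. Since $T_{\min}$ is a closed convex subset of the CAT$(0)$ space $T$, the function $\phi:=d(\cdot,T_{\min})$ is convex and $1$-Lipschitz. By the Korevaar--Schoen/Gromov--Schoen theory, the composition of a harmonic map with a convex function is weakly subharmonic, so $f:=\phi\circ u=d(u(\cdot),T_{\min})$ is subharmonic on $\tSigma$. Because $\Gamma$ acts by isometries preserving $T_{\min}$, one has $f(\gamma.x)=d(\gamma.u(x),T_{\min})=d(u(x),\gamma^{-1}.T_{\min})=f(x)$, so $f$ descends to a subharmonic function on the \emph{compact} surface $\Sigma$. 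A subharmonic function on a closed manifold is constant, say $f\equiv c\ge 0$. (Equivalently, one may run the argument by comparing energies: the nearest-point retraction $\pi\colon T\to T_{\min}$ is $1$-Lipschitz and $\Gamma$-equivariant, so $E(\pi\circ u)\le E(u)$, and minimality forces equality, which again traps the image in $T_{\min}$.)

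To rule out $c>0$ I would invoke the first inclusion: choosing $p\in T_{\min}\subseteq\overline{u(\tSigma)}$ and a sequence with $u(x_n)\to p$, continuity of $\phi$ gives $c=\phi(u(x_n))\to\phi(p)=0$, so $c=0$ and $u(\tSigma)\subseteq T_{\min}$; the degenerate case $\ell_T\equiv 0$ is subsumed, as there $T_{\min}$ is a point and $u$ is forced constant. Combining the inclusions, $\overline{u(\tSigma)}$ is a $\Gamma$-subtree contained in $T_{\min}$, so minimality yields $\overline{u(\tSigma)}=T_{\min}$. The main obstacle I anticipate is the final step of promoting this to $u(\tSigma)=T_{\min}$ on the nose, i.e.\ closedness/surjectivity of the image: writing $D$ for the relatively compact closure of a fundamental domain, $u(\tSigma)=\Gamma\cdot u(D)$ is a union of translates of the compact set $u(D)$, but since the $\Gamma$-action on $T_{\min}$ need not be proper or cocompact, closedness is not automatic. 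I expect this to follow from the local structure of harmonic maps into trees---away from the measure-zero singular set $u$ factors locally through a nonconstant harmonic function into a geodesic, hence is an open map onto $T_{\min}$, making $u(\tSigma)$ open and closed in the connected set $T_{\min}$---but this local analysis is the genuinely delicate point, whereas the subharmonicity argument controlling $u(\tSigma)\subseteq T_{\min}$ is comparatively routine.
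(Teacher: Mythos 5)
Note first that the paper does not prove this statement; it is quoted verbatim from \cite[Theorem 4.4]{daskalopoulos1998character}, so there is no in-paper argument to compare against, and your proposal must be judged on its own. Its two main inclusions are sound and follow the standard route: connected subsets of an $\RR$-tree are convex, so $\overline{u(\tSigma)}$ is a closed, connected, $\Gamma$-invariant subset, i.e.\ a $\Gamma$-subtree, and it contains $T_{\min}$ (every invariant subtree contains the axis of every hyperbolic element, and by Lemma \ref{lem_minimalsubtree} the union of these axes is $T_{\min}$); conversely, $d(u(\cdot),T_{\min})$ is subharmonic because $T_{\min}$ is closed and convex in the NPC space $T$, it is $\Gamma$-invariant, hence descends to the compact surface $\Sigma$ and is constant, and the constant is $0$ by the density just established. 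The degenerate case $\ell_T\equiv 0$ is indeed subsumed, since then $u$ is constant by Theorem \ref{theorem_existence_uniqueness}.

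The genuine gap is your final step, and your proposed repair would fail. A harmonic map into a tree is \emph{not} an open map: near a regular point the image is an arc, and an arc through a branch point of $T_{\min}$ is never a neighborhood of that point in the tree; in the trees relevant here (e.g.\ the leaf-space trees $T_q$ of Proposition \ref{prop_TqisZtree}, whose vertices have infinite valence, or general $\RR$-trees where branch points may be dense) such arcs can even have empty interior. Moreover, even granting openness, ``open and dense'' does not yield equality --- the closedness you invoke is precisely the point at issue. The correct finish is an elementary cut-point argument, with no local analysis of $u$ needed: suppose $p\in T_{\min}\setminus u(\tSigma)$. Since $u(\tSigma)$ is connected, it lies in a single connected component $C$ of $T_{\min}\setminus\{p\}$, so $\overline{u(\tSigma)}\subseteq C\cup\{p\}$. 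Density forces $C\cup\{p\}=T_{\min}$, i.e.\ $T_{\min}\setminus\{p\}$ is connected, so $p$ is a valence-one point of $T_{\min}$. But by Lemma \ref{lem_minimalsubtree}, $T_{\min}$ is a union of axes of hyperbolic elements, hence every point of $T_{\min}$ lies in the interior of a geodesic line and has valence at least two --- a contradiction. Therefore $u(\tSigma)=T_{\min}$. With this substitution your argument becomes a complete proof.
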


\begin{theorem}[{\cite{GromovSchoen1992,korevaari1997global}}]
\label{theorem_existence_uniqueness}
Let $T$ be a $\Gamma$-tree. If $u_0,u_1:\tSigma\to T$ are two distinct $\Gamma$-equivariant harmonic maps, then their images are contained in a common geodesic of $T$, and their Hopf differentials coincide. If $\ell_{T}\equiv 0$, then the harmonic map must be constant. If $\ell_{T}$ is nontrivial and nonabelian, then the $\Gamma$-equivariant harmonic map is unique. 
\end{theorem}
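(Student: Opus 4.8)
The plan is to rest everything on the non-positive curvature of the $\RR$-tree $T$ and the convexity of its distance function, in the spirit of \cite{GromovSchoen1992,korevaari1997global}. The central object is $f(x) := d\big(u_0(x),u_1(x)\big)$ on $\tSigma$. Since $u_0,u_1$ are $\Gamma$-equivariant and $\Gamma$ acts isometrically, $f(\gamma.x) = d\big(\gamma.u_0(x),\gamma.u_1(x)\big) = f(x)$, so $f$ is $\Gamma$-invariant and descends to the compact surface $\Sigma$. The first step is to show $f$ is subharmonic: composing two harmonic maps with the convex distance function of an NPC space yields a subharmonic function, which is the standard Korevaar--Schoen estimate (valid away from the singular set of \cite{GromovSchoen1992}, and extending across it because that set has Hausdorff dimension zero). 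A subharmonic function on a compact manifold is constant, so $f \equiv c$ for some $c \ge 0$.

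The dichotomy on $c$ drives the whole argument. If $c=0$ then $u_0=u_1$ pointwise; so for distinct maps we must have $c>0$, and I would extract rigidity from the equality case of the subharmonicity inequality. Concretely, consider the geodesic interpolation $u_t(x)$, the point dividing $[u_0(x),u_1(x)]$ in ratio $t:(1-t)$. NPC convexity gives $E(u_t) \le (1-t)E(u_0)+tE(u_1)$, and since $u_0,u_1$ are energy minimizers, each $u_t$ is harmonic; constancy of $f$ then forces the convexity defect to vanish, so the family of segments $\{[u_0(x),u_1(x)]\}$ is parallel and spans a flat strip. Here the geometry of trees is decisive: an $\RR$-tree contains no flat strip of positive width, so the strip degenerates and both images lie on a single common geodesic line $L$. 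Identifying $L \cong \RR$, we have $u_1(x)-u_0(x) = \pm c$ pointwise, and by continuity on the connected $\tSigma$ the sign is constant, so $u_1$ is the translate $u_0+c$. Since translation does not change $u_z$, the Hopf differentials $q_{u_0}=4(u_z)^2\,dz^2$ and $q_{u_1}$ coincide, proving the first assertion.

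For the second assertion, if $\ell_T\equiv 0$ then Lemma \ref{lem_minimalsubtree} gives $T_{\min}$ a single point, and by Theorem \ref{thm_minimal} the image of any equivariant harmonic map equals $T_{\min}$, so the map is constant. For the third assertion I argue by contradiction: assume $\ell_T$ is nontrivial and nonabelian yet two distinct harmonic maps exist. By the first part they share a geodesic line $L$, and since $\ell_T$ is nontrivial, $T_{\min}=\bigcup_{\gamma}\mathrm{Axis}(\gamma)$ contains the full axis of a hyperbolic element, forcing $L=T_{\min}$, which is $\Gamma$-invariant; moreover $u_1=u_0+c$ with $c>0$. Now impose equivariance: if some $\gamma$ acts on $L\cong\RR$ as a reflection $y\mapsto -y+b_\gamma$, then $u_1(\gamma.x)=\gamma.u_1(x)$ together with $u_1=u_0+c$ yields $-c=c$, impossible. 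Hence every $\gamma$ acts by a translation, both ends of $L$ are fixed, and Proposition \ref{prop:length-function} makes $\ell_T$ abelian, contradicting the hypothesis. Therefore $c=0$ and the harmonic map is unique, existence being supplied by \cite{GromovSchoen1992,korevaari1997global}.

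The main obstacle is the rigidity step of the second paragraph: upgrading constancy of $f$ to the conclusion that the two images share a single geodesic. This requires the full Korevaar--Schoen and Gromov--Schoen regularity theory to justify both the geodesic-interpolation energy convexity and its equality case across the measure-zero singular set, combined with the elementary but essential fact that trees admit no nondegenerate flat strips. By comparison, the equivariance bookkeeping that rules out reflections, and thereby forces the abelian alternative in the third assertion, is a short but crucial observation.
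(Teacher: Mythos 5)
The paper itself gives no proof of this theorem: it is imported verbatim from Gromov--Schoen and Korevaar--Schoen, so the only possible comparison is with the standard argument of those references, which is exactly the route you reconstruct. Your treatment of the parts that are specific to this paper's setting is correct and, appropriately, uses only results already stated in the paper: constancy when $\ell_T\equiv 0$ follows from Lemma \ref{lem_minimalsubtree} together with Theorem \ref{thm_minimal}; and in the third assertion, your chain --- images in a common geodesic, hence $T_{\min}$ is a $\Gamma$-invariant line containing the axis of some hyperbolic element, the computation $c=-c$ ruling out any $\gamma$ acting as a reflection, so that $\Gamma$ acts by translations, fixes both ends, and $\ell_T$ is abelian by Proposition \ref{prop:length-function}, contradicting the hypothesis --- is exactly right. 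The subharmonicity-plus-compactness argument for constancy of $f=d(u_0,u_1)$ is also the standard one (the paper's references obtain the same conclusion from the strict convexity defect in the interpolation inequality; the two mechanisms are interchangeable here).

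The one step you assert rather than prove is the passage from ``an $\RR$-tree contains no flat strip'' to ``both images lie in a single geodesic.'' Degeneracy of the strip is not by itself collinearity: what is needed is the pointwise equality case of the quadrilateral comparison, which in a tree says that for each pair $x,y$, the parallel segments $[u_0(x),u_1(x)]$ and $[u_0(y),u_1(y)]$ force either $u_0(x)=u_0(y)$ and $u_1(x)=u_1(y)$, or all four points to lie on one geodesic; one then patches these local geodesics into a single one by connectedness of $\tSigma$ (the case $u_0$ constant being excluded by Theorem \ref{thm_minimal} and distinctness). This four-point analysis is precisely what the cited Korevaar--Schoen theory supplies, so deferring to it is legitimate given that the theorem is itself a citation; but if your write-up were meant to stand as a self-contained proof, that is the piece that would have to be written out.
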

\begin{remark}
    If $\ell_{T}$ is abelian, a $\Gamma$-equivariant harmonic map from $\tSigma$ to $T$ may not exist. However, if such an equivariant harmonic map $u:\tSigma\to T$ does exist, then the image of $u$, which is the minimal $\Gamma$-subtree of $T$, must be a geodesic line. Consequently, in this case, the existence of harmonic maps implies that the $\Gamma$-action on $T$ fixes two ends. See \cite[Lemma 6.1]{hewentworthzhang24}.
\end{remark}

\subsection{Hopf differential and folding map}
\noindent
We now discuss the relationship between the Hopf differential and the image tree of a harmonic map.  

Let $T$ be a $\Gamma$-tree and let $u:\widetilde{\Sigma}\to T$ be a $\Gamma$-equivariant harmonic map, with $q_u$ denoting its Hopf differential. 
Recall that the leaf space of $q_u$ defines a $\Gamma$-tree $T_{q_u}$, as introduced in Section~\ref{Rtree}. Consider the projection map $\pi:\widetilde{\Sigma}\to T_{q_u}$, which collapses each leaf to a point. 
It was proved in \cite{wolf1995harmonic} and \cite[Proposition~2.2]{daskalopoulos2000morganshalen} that $\pi$ is itself a harmonic map. 

To proceed, we introduce morphisms between $\Gamma$-trees.

\begin{definition}
Let $T$ and $T'$ be $\Gamma$-trees. 
A \emph{morphism} $f : T \to T'$ is a $\Gamma$–equivariant, continuous, 
piecewise isometric map that sends each edge of $T$ to an edge path in $T'$. 
We say that $f$ \emph{folds at a point} $x \in T$ if there exist segments 
$[x,y_1]$ and $[x,y_2]$ with $[x,y_1] \cap [x,y_2] = \{x\}$ 
such that $f$ maps both $[x,y_1]$ and $[x,y_2]$ isometrically onto a common segment in $T'$. 
A morphism that folds at some point is called a \emph{folding map}.
\end{definition}

\begin{remark}
By \cite[Lemma~I.1.1]{morganotal93}, a morphism is an embedding (hence an isomorphism onto its image) unless it folds at some point. 
Moreover, folding maps can be quite complicated: they may map vertex points to interior points of edges and conversely. 
We refer to \cite[Section~3.1.2]{Daskalopoulos-Wentworth05} for a more detailed discussion.
\end{remark}

The following factorization result holds.

\begin{proposition}[{\cite[Proposition 2.4]{daskalopoulos2000morganshalen}}]
\label{prop_harmoncmap_factortree}
The harmonic map $u:\widetilde{\Sigma}\to T$ factors as $u=f\circ \pi$, 
where $\pi$ is the projection to the Hopf differential tree $T_{q_u}$ of $u$ and 
$f:T_{q_u}\to T$ is a folding map.
\end{proposition}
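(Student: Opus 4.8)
The plan is to exploit the fact that both $u$ and the leaf-space projection $\pi$ are harmonic maps with the same Hopf differential $q_u$, together with the principle that a harmonic map into an $\RR$-tree is constant along the leaves of the vertical foliation of its Hopf differential. Granting this leaf-collapsing property, the factorization is essentially formal: since $\pi:\tSigma\to T_{q_u}$ is by construction the quotient that collapses precisely the leaves of $\tilde{\F}_{q_u}$ to points, any $\Gamma$-equivariant map that is constant on these leaves descends uniquely through $\pi$. Applying this to $u$ produces a well-defined $\Gamma$-equivariant map $f:T_{q_u}\to T$ with $u=f\circ\pi$, the equivariance of $f$ being inherited from those of $u$ and $\pi$.

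The analytic heart is therefore the leaf-collapsing statement, which I would prove locally in natural coordinates. Away from the singular set of $u$ and the zeros of $q_u$, the map $u$ sends a neighborhood into a geodesic segment of $T$, so we may write $u=\phi\,\xi$ for a unit tangent vector $\xi$ along that segment and a real-valued function $\phi$ recording signed distance along it. Harmonicity of $u$ into a flat one-dimensional target forces $\phi$ to be harmonic, whence $q_u=4(\phi_z)^2\,dz^2$ with $\phi_z$ holomorphic. Writing $\phi=\Re\Phi$ for $\Phi$ holomorphic and passing to the natural coordinate $w=s+it=\Phi$ adapted to $q_u$ (so that $q_u=dw^2$), one computes $\phi=\Re w=s$. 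Thus $u$ depends only on the horizontal coordinate $s$ and is constant on the vertical leaves $\{s=\mathrm{const}\}$, which are exactly the leaves of $\tilde{\F}_{q_u}$. The same computation shows that two leaves at coordinates $s_0,s_1$ are sent to points at distance $|s_1-s_0|$ in $T$, matching their transverse distance $\int|ds|=|s_1-s_0|$ in $T_{q_u}$; hence $f$ is a local isometry wherever this coordinate is valid.

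Next I would upgrade these local facts to the global structure of $f$ as a morphism. Continuity of $f$ is inherited from that of $u$ and $\pi$, and the local-isometry property on the complement of the bad set shows $f$ is piecewise isometric. Since the singular set of $u$ has Hausdorff dimension zero and the zeros of $q_u$ are isolated, the leaves passing through the bad set form a nowhere-dense subset of $\tSigma$, and one extends the morphism structure across the branch vertices of $T_{q_u}$ by continuity, checking that each geodesic segment of $T_{q_u}$ is carried to an edge-path of $T$. This exhibits $f$ as a morphism of $\Gamma$-trees in the sense of the definition above. Finally, to identify $f$ as a folding map I would invoke the dichotomy recorded in the remark following the definition: by \cite[Lemma~I.1.1]{morganotal93} a morphism of $\RR$-trees is either an isometric embedding or folds at some point, the embedding case occurring precisely when $u$ agrees, up to the isometry $f$, with the projection $\pi$ itself.

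I expect the main obstacle to lie not in the formal factorization but in the local analysis near the singular set of $u$ together with the zeros of $q_u$: one must verify that the leaf-collapsing and local-isometry properties, transparent on the regular locus via the natural coordinate $w$, persist across these measure-zero but geometrically delicate loci, so that $f$ is genuinely well defined, continuous, and piecewise isometric there. Controlling the order of contact of the geodesic image at regular points adjacent to a zero of $q_u$, and thereby showing that the several leaves meeting at a branch vertex of $T_{q_u}$ map consistently, is the step where the $\emph{folding}$ behavior of $f$ is actually created and where care is most needed.
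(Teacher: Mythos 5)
Your proposal is correct and follows essentially the same route as the proof the paper relies on: the paper offers no independent argument but cites \cite{daskalopoulos2000morganshalen}, whose proof is precisely your strategy --- use the natural coordinate $w$ (with $q_u=dw^2$) at regular points to show $u$ is constant along vertical leaves and isometric transversally, descend through $\pi$ by the universal property of the leaf space, extend across the zeros of $q_u$ and the dimension-zero singular set by continuity, and conclude via the Morgan--Otal embedding/folding dichotomy that the induced equivariant morphism $f$ is the asserted folding map. Nothing in your sketch would fail, and the delicate point you flag (consistency of the several leaves meeting at a branch vertex) is exactly where the cited proof also invests its care.
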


\section{The non-Archimedean Character variety and the Hitchin map}
This section is devoted to the construction of the non-Archimedean character variety and the Hitchin map. After introducing the representation variety and its non-Archimedean topology, we use the theory of equivariant harmonic maps into $\RR$-trees to show that the induced Hitchin map is continuous.

\subsection{Types of representations, the length function, and the trace}\label{sec_3.1}
In this subsection, we define several properties of representations and their relationships to the length function and the trace function. Throughout this subsection, $\rho$ denotes a representation $\rho:\Gamma\to\SL_2(F)$. The results presented in the following two subsections hold for any finitely generated group, not just surface groups.

\begin{definition}
A representation $\rho:\Gamma\to \SL_2(F)$ is called \textbf{irreducible} if it has no invariant lines (one-dimensional subspaces) in $F^2$; otherwise it is \textbf{reducible}. A reducible representation is \textbf{completely reducible} if it has two distinct invariant lines in $F^2$.
\end{definition}

\begin{definition}
    A representation $\rho:\Gamma\to\SL_2(F)$ define a $\Gamma$-action on the Bruhat-Tits tree $T_F$. We define the length function $\ell_{\rho}$ of the $\rho$ by \[\ell_{\rho}:\Gamma\to\ZZ_{\ge 0}:\gamma\mapsto \ell_{T_F}(\rho(\gamma)).\] 
\end{definition}

\begin{definition}
    $\rho$ is called bounded if $\rho(\Gamma)$ is a bounded subgroup of $\SL_2(F)$ with respect to the non-Archimedean norm. It is called unbounded if it is not bounded.
\end{definition}

\begin{proposition}\label{prop:bounded-fixed-vertex}
The following are equivalent:
\begin{itemize}
    \item[\emph{(i)}]  $\rho$ is bounded;
    \item[\emph{(ii)}] $\rho$ is conjugate to a representation $\rho':\Gamma\to\SL_2(\mathcal{O}_v)$;
    \item[\emph{(iii)}] $\ell_\rho\equiv 0$;
    \item[\emph{(iv)}] The corresponding $\Gamma$-action fixed a vertex $[\Lambda]\in T_F$.
\end{itemize}
\end{proposition}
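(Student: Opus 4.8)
The plan is to organize the four conditions around the geometric condition $(iv)$ and prove the cycle $(ii)\Rightarrow(i)\Rightarrow(iv)\Rightarrow(iii)\Rightarrow(iv)\Rightarrow(ii)$. The implication $(iv)\Rightarrow(ii)$ is immediate from the description of stabilizers in Section~\ref{sec_SL2Faction}: if the action fixes a vertex $[\Lambda]$ then $\rho(\Gamma)\subseteq\mathrm{Stab}([\Lambda])$, and $\mathrm{Stab}([\Lambda])$ is conjugate to $\SL_2(\mathcal{O}_v)$, so a conjugate of $\rho$ lands in $\SL_2(\mathcal{O}_v)$. The implication $(ii)\Rightarrow(i)$ holds because $\SL_2(\mathcal{O}_v)$ is bounded and conjugation by a fixed $g$ alters norms only by the factor $\|g\|\,\|g^{-1}\|$; hence boundedness is conjugation invariant.

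For $(i)\Rightarrow(iv)$ I would use that the tree distance $d_{T_F}([\mathcal{O}_v^2],g.[\mathcal{O}_v^2])$ is controlled by $\max\{\|g\|,\|g^{-1}\|\}$, and since $g\in\SL_2(F)$ one has $\|g^{-1}\|=\|g\|$; so a bounded subgroup has a bounded orbit on $T_F$. A bounded subset of the complete $\RR$-tree $T_F$ has a well-defined circumcenter, fixed by the whole group (the Bruhat--Tits fixed-point theorem), producing a fixed point $p$. If $p$ is an edge midpoint, then since $\SL_2(F)$ preserves the $2$-coloring of $T_F$ (distances within one vertex orbit are even, so adjacent vertices lie in distinct orbits), no element swaps the two endpoints; both are therefore fixed and $(iv)$ holds at an honest vertex. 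The direction $(iv)\Rightarrow(iii)$ is trivial, since a fixed vertex gives $d([\Lambda],\rho(\gamma).[\Lambda])=0$ and hence $\ell_\rho\equiv 0$.

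The substantial implication is $(iii)\Rightarrow(iv)$. By Lemma~\ref{lem_length=logtrace}, $\ell_\rho\equiv 0$ means every $\rho(\gamma)$ is elliptic, so by Lemma~\ref{lem_ellipticorhyperbolic} each has a nonempty fixed-point set $\mathrm{Fix}(\rho(\gamma))$, which is a subtree of $T_F$. Fix a finite generating set $\{g_1,\dots,g_n\}$ of $\Gamma$. For each pair $i,j$, both $\rho(g_i)$ and $\rho(g_ig_j)$ are elliptic, and Serre's lemma---that two elliptic isometries with disjoint fixed sets have a hyperbolic product \cite{serre80trees}---forces $\mathrm{Fix}(\rho(g_i))\cap\mathrm{Fix}(\rho(g_j))\neq\varnothing$. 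By the Helly property of trees, the finitely many pairwise-intersecting subtrees $\mathrm{Fix}(\rho(g_1)),\dots,\mathrm{Fix}(\rho(g_n))$ share a common point $p$, which is fixed by every generator and hence by all of $\rho(\Gamma)$; the $2$-coloring argument again promotes $p$ to a fixed vertex.

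I expect this last step to be the main obstacle, since ellipticity of individual elements does not by itself yield a common fixed point: the full unipotent subgroup $\{\left(\begin{smallmatrix}1&t\\0&1\end{smallmatrix}\right):t\in F\}$ consists of elliptic elements, is unbounded, and fixes only an end of $T_F$. What rescues the statement is the finite generation of $\Gamma$, which lets Serre's lemma convert ``all products elliptic'' into pairwise intersection of the generators' fixed trees, after which Helly's property for a \emph{finite} family closes the argument. The care needed is in invoking Serre's lemma and the Helly property in the possibly non-locally-finite tree $T_F$ (both remain valid in arbitrary $\RR$-trees) and in the bookkeeping, via the bipartite structure of $T_F$, that upgrades a fixed point to a fixed vertex.
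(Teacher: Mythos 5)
Your proof is correct, but it takes a genuinely different and more self-contained route than the paper. The paper disposes of the proposition in two sentences: it gets the equivalence of (ii), (iii), (iv) from the Culler--Morgan theory of length functions (the global-fixed-point statement is really Lemma~\ref{lem_minimalsubtree}, i.e.\ \cite[Proposition 3.1]{cullermorgan87groupactions}, though the paper cites Lemma~\ref{lem_ellipticorhyperbolic}), and it gets (i)$\Leftrightarrow$(ii) from the assertion that $\SL_2(\mathcal{O}_v)$ is a maximal bounded subgroup of $\SL_2(F)$. You instead prove the two substantive implications from scratch: (i)$\Rightarrow$(iv) via boundedness of the orbit plus the Bruhat--Tits circumcenter fixed-point theorem on the complete CAT(0) space $T_F$, and (iii)$\Rightarrow$(iv) via Serre's lemma (elliptic elements with disjoint fixed trees have hyperbolic product) combined with the Helly property for finitely many pairwise-intersecting subtrees, with the bipartite structure of $T_F$ used in both places to rule out edge inversions and promote a fixed point to a fixed vertex. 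What your approach buys is transparency: it makes explicit exactly where finite generation of $\Gamma$ enters --- your unipotent-subgroup example correctly shows (iii)$\Rightarrow$(iv) fails without it --- which is precisely the content hidden inside the paper's citation of Culler--Morgan; it also avoids invoking the structure theory of maximal bounded subgroups, replacing it with the elementary norm estimate $\|g^{-1}\|=\|g\|$ for $g\in\SL_2(F)$. What the paper's approach buys is brevity and consistency with the toolkit it has already set up in Section~\ref{Rtree}, since Lemma~\ref{lem_minimalsubtree} is quoted there anyway and is reused elsewhere. Both arguments are sound; yours would serve as a complete substitute proof, and in fact repairs the slight imprecision in the paper's citation.
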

\begin{proof}
The equivalence of (ii), (iii) and (iv) follows from Lemma \ref{lem_ellipticorhyperbolic}. The equivalence between (i) and (ii) follows from the fact that $\SL_2(\mathcal{O}_v)$ is a maximal bounded subgroup of $\SL_2(F)$.
\end{proof}

\begin{lemma}
    Suppose $\rho$ is unbounded. Then it is reducible if and only if $\ell_{\rho}$ is abelian.
\end{lemma}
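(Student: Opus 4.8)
The plan is to prove the two implications separately: reducibility $\Rightarrow$ abelian by a direct computation with the translation-length formula of Lemma \ref{lem_length=logtrace}, and the converse by invoking the fixed-end criterion of Proposition \ref{prop:length-function} together with the boundary identification $\partial T_F\cong\mathbb{P}^1(F)$.

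First I would treat the implication that reducibility forces $\ell_\rho$ to be abelian. If $\rho$ preserves a line in $F^2$, then after conjugation---which alters neither $\ell_\rho$ nor reducibility---every $\rho(\gamma)$ is upper triangular,
\[
\rho(\gamma)=\begin{pmatrix} a(\gamma) & b(\gamma)\\ 0 & a(\gamma)^{-1}\end{pmatrix}.
\]
Comparing diagonal entries of a product shows that $a\colon\Gamma\to F^\times$ is a homomorphism, and $\trace\rho(\gamma)=a(\gamma)+a(\gamma)^{-1}$. Writing $n=v(a(\gamma))$, the ultrametric inequality gives $v(a(\gamma)+a(\gamma)^{-1})=-|n|$ when $n\neq 0$ (the valuations $n$ and $-n$ differ) and $v(a(\gamma)+a(\gamma)^{-1})\ge 0$ when $n=0$; in either case $\min\{0,v(\trace\rho(\gamma))\}=-|n|$, so Lemma \ref{lem_length=logtrace} yields $\ell_\rho(\gamma)=2|v(a(\gamma))|$. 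Hence $\mu:=2\,(v\circ a)\colon\Gamma\to\ZZ\subset\RR$ is a homomorphism with $\ell_\rho=|\mu|$, i.e. $\ell_\rho$ is abelian. Note this step uses nothing about unboundedness.

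For the converse I would assume $\ell_\rho$ abelian and deduce reducibility. Since $\rho$ is unbounded, Proposition \ref{prop:bounded-fixed-vertex} gives $\ell_\rho\not\equiv 0$, so the minimal $\Gamma$-subtree $T_{\min}\subseteq T_F$ (Lemma \ref{lem_minimalsubtree}) is a minimal $\Gamma$-tree carrying the same nontrivial length function. Applying Proposition \ref{prop:length-function} to $T_{\min}$, the abelianness of $\ell_\rho$ means precisely that $\Gamma$ fixes an end $\xi$ of $T_{\min}$. Because $T_{\min}$ is a $\Gamma$-invariant subtree of $T_F$ with the restricted metric, a geodesic ray representing $\xi$ is also a geodesic ray in $T_F$ and its class remains $\Gamma$-invariant, so $\xi$ defines a $\Gamma$-fixed end of $T_F$. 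Under the $\SL_2(F)$-equivariant identification $\partial T_F\cong\mathbb{P}^1(F)$, this corresponds to a point of $\mathbb{P}^1(F)$ fixed by every $\rho(\gamma)$, that is, by Lemma \ref{lem_eigen}, a line in $F^2$ invariant under $\rho(\Gamma)$. Therefore $\rho$ is reducible.

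The step requiring the most care is this last passage in the converse: matching the abstract ``abelian length function / fixed end'' description from Proposition \ref{prop:length-function} with the concrete ``common invariant line'' description of reducibility. Two points must be verified cleanly---that Proposition \ref{prop:length-function} genuinely applies (i.e. $T_{\min}$ is minimal and carries the same nontrivial $\ell_\rho$, which is exactly where unboundedness enters), and that the dictionary $\partial T_F\cong\mathbb{P}^1(F)$ is equivariant, so that a $\Gamma$-fixed end really does produce a common eigenvector. Everything else reduces to the ultrametric bookkeeping of the forward direction.
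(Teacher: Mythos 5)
Your proof is correct, but it takes a partly different route from the paper's, which is worth spelling out. The paper proves both directions simultaneously through a single dictionary: by Lemma \ref{lem_eigen}, $\rho$ is reducible if and only if it fixes an end of $T_F$, and by Proposition \ref{prop:length-function} (combined with $\ell_\rho\not\equiv 0$ from Proposition \ref{prop:bounded-fixed-vertex}), fixing an end is equivalent to $\ell_\rho$ being abelian. Your backward direction follows this same route, but your forward direction (reducible $\Rightarrow$ abelian) is instead a direct ultrametric computation: conjugate to upper-triangular form, extract the diagonal character $a\colon\Gamma\to F^\times$, and derive $\ell_\rho(\gamma)=2\,|v(a(\gamma))|$ from Lemma \ref{lem_length=logtrace}. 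This buys you two things the paper's argument leaves implicit: the explicit homomorphism $\mu=2\,(v\circ a)$ witnessing abelianness, and the observation that this implication requires no unboundedness hypothesis at all. Your backward direction is in fact more careful than the paper's one-line version: Proposition \ref{prop:length-function} is stated for \emph{minimal} $\Gamma$-trees, and $T_F$ itself need not be minimal under the $\Gamma$-action, so applying it directly to $T_F$ (as the paper does) glosses over a step; you correctly pass to $T_{\min}$, note that it carries the same nontrivial length function, and then transport the fixed end back to $\partial T_F$ via the injective $\Gamma$-equivariant map $\partial T_{\min}\to\partial T_F$ before invoking the identification $\partial T_F\cong\mathbb{P}^1(F)$ and Lemma \ref{lem_eigen}. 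The trade-off is only length: the paper's proof is three lines and uniform in both directions, while yours is longer but self-contained and closes the minimality gap.
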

\begin{proof}
    If $\rho$ is unbounded, then by Proposition \ref{prop:bounded-fixed-vertex}, $\ell_{\rho}\ne 0$. By Lemma \ref{lem_eigen} and Proposition \ref{prop:length-function}, $\rho$ is reducible if and only if it fixes an end of $T_F$, which holds if and only if $\ell_{\rho}$ is abelian.
\end{proof}

As a result, for an unbounded representation $\rho$, it is irreducible if and only if $\ell_{\rho}$ is nonabelian.

We have the following characterization of unbounded irreducible representations:
\begin{lemma}\textup{\cite[Theorem 2.7]{cullermorgan87groupactions}}\label{lem_freegrpinnonabelian}
    An unbounded representation $\rho$ is irreducible if and only if there exists two elements $\gamma_1,\gamma_2\in \Gamma$ that generate a free subgroup of rank $2$ in $\Gamma$, and that $\langle\gamma_1,\gamma_2\rangle$ acts freely and properly discontinuously on $T_F$.
\end{lemma}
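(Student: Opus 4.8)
The plan is to reformulate the statement entirely in terms of the length function and the action on $T_F$, and then prove the two implications separately: the reverse direction by a Busemann-homomorphism argument and the forward direction by a ping--pong/Schottky construction. By the lemma preceding this one, an unbounded $\rho$ is reducible precisely when $\ell_\rho$ is abelian, equivalently (by Lemma~\ref{lem_eigen} and Proposition~\ref{prop:length-function}) precisely when the $\Gamma$-action on $T_F$ fixes an end. Thus it suffices to prove: the action fixes no end if and only if there exist $\gamma_1,\gamma_2\in\Gamma$ generating a free group of rank two that acts freely and properly discontinuously on $T_F$.

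For the reverse implication, I would suppose such $\gamma_1,\gamma_2$ exist and, for contradiction, that $\rho$ is reducible, so the action fixes an end $\xi\in\partial T_F$. Fixing $\xi$ yields a Busemann (translation-number) homomorphism $b:\langle\gamma_1,\gamma_2\rangle\to\ZZ$ recording the signed displacement of horospheres centered at $\xi$. In a tree an isometry fixing $\xi$ is hyperbolic with $\xi$ an endpoint of its axis exactly when $b\neq 0$, and is elliptic exactly when $b=0$; since the action is free, the only element with a fixed point is the identity, so $b$ is injective on $\langle\gamma_1,\gamma_2\rangle$. This embeds a nonabelian free group of rank two into $\ZZ$, which is absurd. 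Hence $\rho$ fixes no end and is irreducible.

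For the forward implication, I would assume the action fixes no end, so $\ell_\rho$ is nonabelian and there is at least one hyperbolic $g\in\Gamma$, with axis $A_g$ and endpoints $\{g^+,g^-\}\subset\partial T_F$. The first step is to produce two independent hyperbolic elements: since no end is globally fixed, some $\delta\in\Gamma$ moves $\{g^+,g^-\}$, and from $g$ and a conjugate $\delta g\delta^{-1}$ one extracts (after passing to powers if the axes share a single end) a pair $g,h$ of hyperbolic elements whose axes meet in a bounded, possibly empty, segment and share no end. The second step is ping--pong: around the four ends $g^\pm,h^\pm$ choose disjoint half-tree (branch) neighborhoods, and choose $N$ large enough that $\rho(g)^{\pm N}$ carries the complement of the neighborhood of $g^\mp$ into the neighborhood of $g^\pm$, and likewise for $h$. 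The Ping--Pong Lemma then shows $\gamma_1:=g^N$ and $\gamma_2:=h^N$ generate a free group of rank two whose nontrivial reduced words translate any fixed vertex a distance tending to infinity; freeness of the action and---since displacement grows without bound---proper discontinuity follow directly, with no appeal to local compactness of $T_F$. Because the action is free, $\rho$ is injective on $\langle\gamma_1,\gamma_2\rangle$, so this subgroup of $\Gamma$ is itself free of rank two.

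The main obstacle is the forward direction, and within it the first step: extracting genuinely independent axes from the no-fixed-end hypothesis. One must rule out the degenerate configuration in which all hyperbolic axes share a common end (which would force a fixed end, contradicting nonabelianness) and the configuration in which two axes overlap in an unbounded ray; controlling the latter is precisely what forces the passage to high powers before ping--pong applies. The bookkeeping of the branch neighborhoods and the choice of $N$ is routine once the independent pair is in hand, and the growth estimate on word displacement is what replaces compactness in establishing proper discontinuity.
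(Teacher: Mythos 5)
First, a point of comparison: the paper does not prove this lemma at all --- it is quoted as \cite[Theorem 2.7]{cullermorgan87groupactions}, after the preceding lemma reduces ``unbounded irreducible'' to ``nonabelian length function'' (equivalently, by Lemma~\ref{lem_eigen} and Proposition~\ref{prop:length-function}, ``no fixed end''). Your opening reduction is exactly that step, and your reverse implication is correct and complete: an action fixing an end $\xi$ gives an additive Busemann homomorphism $b$ into $\mathbb{Z}$ whose kernel consists of elliptic elements, so a subgroup acting freely embeds in $\mathbb{Z}$ and cannot be free of rank two. So in effect you are re-proving Culler--Morgan's theorem rather than citing it, which is a legitimate (if more laborious) route.

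The genuine gap is in the forward direction, at the step you yourself flag as the main obstacle: producing two hyperbolic elements whose axes share no end. Your proposed remedy --- ``passing to powers if the axes share a single end'' --- cannot work, because $\mathrm{Axis}(g^N)=\mathrm{Axis}(g)$ for every $N\neq 0$: taking powers never changes the configuration of axes. Worse, if the axes of $g$ and $h=\delta g\delta^{-1}$ share an end $\xi$, then $g$ and $h$ each fix $\xi$, hence so does the whole subgroup $\langle g^N,h^N\rangle$ for any $N$; by your own reverse-direction argument, no such subgroup can be free of rank two acting freely, so ping--pong with powers of this pair is doomed in principle, not merely hard to verify. (High powers are the correct fix only for the other degenerate case, where two axes overlap in a \emph{bounded} segment longer than the translation lengths.) Repairing this requires manufacturing genuinely new group elements, which is the content of Culler--Morgan's Section~2: e.g., if the distinct axes of $g,h$ share the end $\xi$ and both translate toward it, the word $w=g^{\ell(h)}h^{-\ell(g)}$ fixes $\xi$ with Busemann translation zero, hence is elliptic, and is nontrivial (otherwise a common power would force the axes to coincide); combining such elliptic elements and their conjugates, whose fixed trees can be pushed disjoint, with the fact that a product of elliptics with disjoint fixed sets is hyperbolic with controlled axis, one eventually produces an independent pair. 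Without this (or an equivalent) argument, your proof establishes the lemma only under the unproven assumption that an independent pair of hyperbolic elements exists; the rest of your ping--pong outline, the freeness of $\langle\gamma_1,\gamma_2\rangle$ inside $\Gamma$, and the deduction of proper discontinuity from linear displacement growth are all fine.
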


\subsubsection{The trace function}
The following lemma is essentially due to \cite[Proposition 1.4.1]{CullerShalen83} and \cite[Lemma 4]{invitationCullerShalen}. Note that these references work over algebraically closed fields; however, their proofs rely only on the Cayley–Hamilton theorem, which holds over any commutative ring with identity.
\begin{lemma}\label{lem_tracepolynomial}
The trace of any word in the matrices $A_1,\dots,A_n \in \SL_2(F)$ is a polynomial with integral coefficients in the $2^n-1$ traces $\trace(A_{j_1}\cdots A_{j_m})$, with $1\le j_1<\cdots<j_m\le n$ and $m\le n$.
\end{lemma}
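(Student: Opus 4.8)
The plan is to derive everything from the Cayley--Hamilton theorem, which for $M\in\SL_2(F)$ reads $M^2-\trace(M)M+I=0$ and holds over any commutative ring with identity; this is precisely why no hypothesis on $F$ is needed. Multiplying by $M^{-1}$ gives $M^{-1}=\trace(M)I-M$, hence $\trace(M^{-1})=\trace(M)$, and applying this to $Y$, multiplying by $X$, and taking the trace gives the fundamental relation $\trace(XY)+\trace(XY^{-1})=\trace(X)\trace(Y)$. Polarizing Cayley--Hamilton (replacing $M$ by $X+Y$ and using $\det(X+Y)=\det X+\det Y+\trace(X)\trace(Y)-\trace(XY)$ for $2\times2$ matrices) yields in addition
\begin{equation*}
XY+YX=\trace(X)\,Y+\trace(Y)\,X+\bigl(\trace(XY)-\trace(X)\trace(Y)\bigr)I .
\end{equation*}
These identities, together with cyclicity $\trace(XY)=\trace(YX)$ and $A_i^2=\trace(A_i)A_i-I$, are the only tools I would use.

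First I would reduce to positive words. Writing $A_i^{-1}=\trace(A_i)I-A_i$ inside any word $UA_i^{-1}V$ gives $\trace(UA_i^{-1}V)=\trace(A_i)\trace(UV)-\trace(UA_iV)$, which trades one inverse for a word that is strictly shorter ($\trace(UV)$) and one of the same length carrying one fewer inverse ($\trace(UA_iV)$). An induction on the number of inverse letters, with the strictly shorter terms absorbed by an outer induction on word length, then expresses $\trace(W)$, for any word $W$ in the $A_i^{\pm1}$, as an integer polynomial in traces of positive words.

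Second I would sort and de-duplicate the positive words. Whenever two equal letters become adjacent, $A_iA_i=\trace(A_i)A_i-I$ strictly reduces the length. To put distinct letters into increasing order I would use the polarized identity in the swap form
\begin{multline*}
\trace(UA_jA_iV)=-\trace(UA_iA_jV)+\trace(A_i)\trace(UA_jV)\\
+\trace(A_j)\trace(UA_iV)+\bigl(\trace(A_iA_j)-\trace(A_i)\trace(A_j)\bigr)\trace(UV),
\end{multline*}
which converts an out-of-order adjacent pair $A_jA_i$ (with $j>i$) into $A_iA_j$ at the cost of the swapped word of the same length plus strictly shorter words. Running this as a bubble sort — outer induction on length, inner induction on the number of inversions, interleaved with length-reducing Cayley--Hamilton steps whenever sorting creates a repeat — terminates and leaves only positive words with strictly increasing indices $A_{j_1}\cdots A_{j_m}$, $j_1<\cdots<j_m$. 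Such products are indexed by the nonempty subsets of $\{1,\dots,n\}$, of which there are $2^n-1$, and every coefficient generated is an integer, so the conclusion follows.

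The main obstacle is organizing termination: each identity application must strictly decrease a single well-chosen complexity measure. I expect the correct measure to be the lexicographic triple (length, number of inverse letters, number of inversions), and the bookkeeping lies in verifying that inverse-elimination, de-duplication, and adjacent swaps each decrease this triple while introducing only integer coefficients and only traces of sub-words. Once this measure is fixed the induction is routine; the one delicate point is that a swap may create a fresh adjacent repeat, which must be resolved by a length-reducing step before the sort continues.
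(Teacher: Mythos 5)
Your proof is correct: the three reduction identities (inverse elimination via $A_i^{-1}=\trace(A_i)I-A_i$, de-duplication via $A_i^2=\trace(A_i)A_i-I$, and the polarized Cayley--Hamilton swap) all check out, and the lexicographic measure (length, number of inverse letters, number of inversions) makes the induction terminate with only the $2^n-1$ increasing-index traces and integer coefficients. This is essentially the same approach as the paper, which writes out no argument of its own but cites Culler--Shalen's proof and observes that it relies only on the Cayley--Hamilton theorem over an arbitrary commutative ring --- precisely the classical induction you have reconstructed in detail.
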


\begin{definition}
    For a representation $\rho$, the trace function is defined as \[\trace_\rho:\Gamma\to F:\gamma\mapsto \trace(\rho(\gamma)).\]
\end{definition}

We deduce from Proposition \ref{prop:bounded-fixed-vertex} that:
\begin{proposition}
    A representation $\rho$ is bounded if and only if $\trace_{\rho}$ takes values in $\mathcal{O}_v$.
\end{proposition}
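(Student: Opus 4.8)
The plan is to reduce the statement to two results already in hand: the equivalence of boundedness with the vanishing of the length function (Proposition \ref{prop:bounded-fixed-vertex}, (i) $\Leftrightarrow$ (iii)) and the explicit valuation formula for translation length (Lemma \ref{lem_length=logtrace}). Together these turn a statement about the boundedness of $\rho(\Gamma)$ into a pointwise condition on the valuations of traces, so that no further work on the tree is needed.

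For the forward implication I would argue directly: if $\rho$ is bounded then, by Proposition \ref{prop:bounded-fixed-vertex}, it is conjugate to some $\rho':\Gamma\to\SL_2(\mathcal{O}_v)$; since traces are conjugation-invariant and every entry of $\rho'(\gamma)$ lies in $\mathcal{O}_v$, we obtain $\trace_\rho(\gamma)=\trace\rho'(\gamma)\in\mathcal{O}_v$ for all $\gamma$. For the converse I would pass through the length function to keep the argument uniform. By Lemma \ref{lem_length=logtrace}, for every $\gamma\in\Gamma$,
\[
\ell_\rho(\gamma)=\ell_{T_F}(\rho(\gamma))=-2\min\{0,v(\trace_\rho(\gamma))\}.
\]
Because $\mathcal{O}_v=\{x\in F:v(x)\ge 0\}$, the right-hand side vanishes exactly when $v(\trace_\rho(\gamma))\ge 0$, that is, when $\trace_\rho(\gamma)\in\mathcal{O}_v$. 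Hence $\ell_\rho\equiv 0$ if and only if $\trace_\rho$ takes all of its values in $\mathcal{O}_v$; combined with the equivalence (i) $\Leftrightarrow$ (iii) of Proposition \ref{prop:bounded-fixed-vertex}, this yields both directions at once.

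I do not expect a genuine obstacle here: the proposition is essentially a dictionary entry translating the geometric characterization of boundedness (fixing a vertex, equivalently $\ell_\rho\equiv 0$) into the arithmetic language of trace valuations, and all of the substantive content is already packaged into Proposition \ref{prop:bounded-fixed-vertex} and Lemma \ref{lem_length=logtrace}. The only point deserving a moment of care is the elementary equivalence $\ell_{T_F}(A)=0\iff v(\trace A)\ge 0$, which is immediate from the stated formula once one recalls the definition of $\mathcal{O}_v$.
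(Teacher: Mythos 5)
Your proof is correct and is essentially the argument the paper intends: the paper states this proposition as a direct deduction from Proposition \ref{prop:bounded-fixed-vertex}, and filling in that deduction requires exactly the combination you use, namely the trace--valuation formula of Lemma \ref{lem_length=logtrace} to identify $\ell_\rho\equiv 0$ with $\trace_\rho(\Gamma)\subset\mathcal{O}_v$, together with the equivalence of boundedness with the vanishing of the length function. Your forward direction via conjugation into $\SL_2(\mathcal{O}_v)$ is a harmless variant; as you note, the length-function argument already yields both implications at once.
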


\begin{definition}
    A representation $\rho:\Gamma\to\SL_2(F)$ is called absolutely irreducible, if it is irreducible and the induced representation $\bar{\rho}:\Gamma\to\SL_2(\overline{F}):\gamma\mapsto \rho(\gamma)$ remains irreducible over the algebraic closure $\overline{F}$.
\end{definition}

\begin{lemma}\textup{\cite[Lemma 1.2.1]{CullerShalen83}}\label{lem_reduciblechar}
    Let $\rho:\Gamma\to\SL_2(F)$ be a representation with nonabelian image. Then the following are equivalent:
    \begin{itemize}
        \item[\emph{(i)}] $\rho$ is reducible;
        \item[\emph{(ii)}] $\bar{\rho}$ is reducible over $\overline{F}$;
        \item[\emph{(iii)}] $\trace_{\rho}(\gamma)=2$ for each element $\gamma$ of the commutator subgroup $[\Gamma,\Gamma]$.
    \end{itemize}
\end{lemma}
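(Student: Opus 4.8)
The plan is to prove the three conditions equivalent by running the cycle (i)$\Rightarrow$(ii)$\Rightarrow$(iii)$\Rightarrow$(i), following \cite{CullerShalen83} but tracking the field of definition, which is the only genuinely new issue compared with the algebraically closed case. The implication (i)$\Rightarrow$(ii) is immediate, since an invariant line in $F^2$ stays invariant in $\overline{F}^2$. For (ii)$\Rightarrow$(iii) I would conjugate $\bar{\rho}$ over $\overline{F}$ into upper-triangular form; then $\gamma\mapsto(\text{upper-left entry of }\bar{\rho}(\gamma))$ is a homomorphism $\Gamma\to\overline{F}^\times$, hence trivial on $[\Gamma,\Gamma]$, so every $\bar{\rho}(\gamma)$ with $\gamma\in[\Gamma,\Gamma]$ is unipotent with trace $2$. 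As the trace is invariant under conjugation and under the base change $F\hookrightarrow\overline{F}$, this yields $\trace_\rho(\gamma)=2$ for all $\gamma\in[\Gamma,\Gamma]$.

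The substance lies in (iii)$\Rightarrow$(i), and this is where the nonabelian hypothesis enters. Assuming (iii), the set $\rho([\Gamma,\Gamma])$ consists entirely of trace-$2$ matrices, i.e. of $I$ together with nontrivial unipotents. Since $\rho(\Gamma)$ is nonabelian there is a commutator $\nu=[\alpha,\beta]$ with $N:=\rho(\nu)\ne I$, and by (iii) this $N$ is a nontrivial unipotent. The key point is that its fixed line is already defined over $F$: by Cayley--Hamilton $(N-I)^2=0$, so $N-I$ is a nonzero rank-one nilpotent matrix with entries in $F$, and $L_0:=\Ker(N-I)\subset F^2$ is a one-dimensional $F$-subspace, equal to the unique $N$-fixed line in $\overline{F}^2$. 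I would then show $L_0$ is $\rho(\Gamma)$-invariant. For arbitrary $\gamma\in\Gamma$, put $N':=\rho(\gamma\nu\gamma^{-1})$; by normality of $[\Gamma,\Gamma]$ both $N'$ and $NN'=\rho(\nu\cdot\gamma\nu\gamma^{-1})$ lie in $\rho([\Gamma,\Gamma])$, hence have trace $2$. A short explicit computation in $\SL_2(\overline{F})$ shows that two nontrivial unipotents with distinct fixed lines have product of trace $\ne 2$: writing $N=\begin{pmatrix}1&x\\0&1\end{pmatrix}$ in a basis adapted to $L_0$ (so $x\ne0$) and $N'=I+M$ with $M$ nilpotent, one gets $\trace(NN')=2+xc$, where $c$ is the lower-left entry of $M$ and $c\ne0$ precisely when the fixed line of $N'$ differs from $L_0$. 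Since $\trace(NN')=2$, we conclude $c=0$, i.e. $\rho(\gamma)L_0=L_0$. As $\gamma$ was arbitrary, $L_0$ is an $F$-rational $\rho(\Gamma)$-invariant line, giving (i).

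I expect the main obstacle to be the bookkeeping in (iii)$\Rightarrow$(i): one must use that (iii) constrains the trace of \emph{every} element of $[\Gamma,\Gamma]$, not merely of commutators of a fixed generating set, so that both $N'$ and the product $NN'$ are forced to have trace $2$; and one must ensure the invariant line is defined over $F$ and not only over $\overline{F}$. This second point is exactly what forces the nonabelian hypothesis into the statement: without it, the non-split-torus representations (abelian, irreducible over $F$ but diagonalizable over $\overline{F}$) satisfy (ii) and (iii) yet fail (i), so a hypothesis excluding them is essential, and here it is supplied by the nontrivial commutator $N$, whose kernel line is automatically $F$-rational. The final $\SL_2$ trace identity is routine and I would not belabor it.
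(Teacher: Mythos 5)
Your proof is correct. The paper itself gives no argument for this lemma---it simply cites \cite[Lemma 1.2.1]{CullerShalen83}---and your cycle (i)$\Rightarrow$(ii)$\Rightarrow$(iii)$\Rightarrow$(i) is essentially the Culler--Shalen argument, with the one genuinely new point (that the fixed line $\Ker(N-I)$ of the nontrivial unipotent $N=\rho(\nu)$ is automatically $F$-rational, so reducibility descends from $\overline{F}$ to $F$) identified and handled correctly; your trace computation $\trace(NN')=2+xc$ and the observation that the nonabelian hypothesis is exactly what rules out anisotropic-torus counterexamples are both accurate.
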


\begin{proposition}\label{prop_absirr}
    An unbounded irreducible representation $\rho:\Gamma\to\SL_2(F)$ is absolutely irreducible. 
\end{proposition}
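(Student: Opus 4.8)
The plan is to reduce everything to Lemma \ref{lem_reduciblechar}, whose equivalence (i) $\Leftrightarrow$ (ii) says precisely that reducibility of $\rho$ over $F$ is the same as reducibility of $\bar{\rho}$ over $\overline{F}$. Since $\rho$ is assumed irreducible, this would immediately give that $\bar{\rho}$ is irreducible over $\overline{F}$, which together with irreducibility of $\rho$ is exactly the definition of absolute irreducibility. The catch is that Lemma \ref{lem_reduciblechar} is only stated for representations with \emph{nonabelian} image, so the actual work is to verify that hypothesis for an unbounded irreducible $\rho$.

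First I would establish that $\rho(\Gamma)$ is nonabelian. Because $\rho$ is unbounded and irreducible, Lemma \ref{lem_freegrpinnonabelian} produces elements $\gamma_1,\gamma_2\in\Gamma$ generating a rank-two free subgroup on which the induced action on $T_F$ is free and properly discontinuous. A free action has trivial point stabilizers, so no nontrivial element of $\langle\gamma_1,\gamma_2\rangle$ can lie in $\ker\rho$; hence $\rho$ is injective on $\langle\gamma_1,\gamma_2\rangle$ and $\rho(\langle\gamma_1,\gamma_2\rangle)$ is free of rank two. In particular $\rho(\Gamma)$ contains a nonabelian subgroup and is therefore nonabelian. (Alternatively, one can argue directly on the tree: an unbounded abelian image would have a hyperbolic element whose axis is preserved by everything commuting with it, forcing two fixed ends and hence an abelian $\ell_\rho$ by Proposition \ref{prop:length-function}, contradicting irreducibility via the characterization following Proposition \ref{prop:bounded-fixed-vertex}.)

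With the nonabelian image in hand, I would apply Lemma \ref{lem_reduciblechar} to $\rho$ itself: since $\rho$ is irreducible, condition (i) fails, so by the equivalence (i) $\Leftrightarrow$ (ii) condition (ii) also fails, i.e.\ $\bar{\rho}$ remains irreducible over $\overline{F}$. Thus $\rho$ is irreducible and stays irreducible after base change, which is absolute irreducibility. The only genuinely substantive step is the first one—supplying the nonabelianity hypothesis required by Lemma \ref{lem_reduciblechar}—and this is exactly where the tree geometry enters, via the free rank-two subgroup guaranteed by Lemma \ref{lem_freegrpinnonabelian}; the remainder is a direct citation.
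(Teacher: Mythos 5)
Your proof is correct and follows essentially the same route as the paper: both invoke Lemma~\ref{lem_freegrpinnonabelian} to show $\rho(\Gamma)$ contains a rank-two free (hence nonabelian) subgroup, and then apply the equivalence (i)~$\Leftrightarrow$~(ii) of Lemma~\ref{lem_reduciblechar} to conclude $\bar{\rho}$ is irreducible over $\overline{F}$. Your explicit verification that the free action forces $\rho$ to be injective on $\langle\gamma_1,\gamma_2\rangle$ fills in a small step the paper leaves implicit, but the argument is the same.
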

\begin{proof}
    By Lemma \ref{lem_freegrpinnonabelian}, $\rho(\Gamma)$ contains a nonabelian free subgroup and hence Lemma \ref{lem_reduciblechar} applies to $\rho$. Thus $\bar{\rho}$ is irreducible over $\overline{F}$.
\end{proof}

\begin{lemma}\textup{\cite[Theorem 6.12]{Nakamoto00}}\label{lem_trace_determines_absirr}
    Suppose $\rho:\Gamma\to\SL_2(F)$ is absolutely irreducible, and let $\rho'$ be another representation. Then $\rho$ is conjugate to $\rho'$ over $F$ if and only if $\trace_{\rho}\equiv\trace_{\rho'}$.
\end{lemma}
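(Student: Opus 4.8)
The plan is to prove the converse direction, since the forward one is immediate: if $\rho'=g\rho g^{-1}$ for some invertible $g$, then $\trace_{\rho'}(\gamma)=\trace\big(g\rho(\gamma)g^{-1}\big)=\trace_\rho(\gamma)$ for every $\gamma$. So assume $\trace_\rho\equiv\trace_{\rho'}$. The starting point is the standard characterization coming from Burnside's theorem: a representation into $\SL_2(F)$ is absolutely irreducible if and only if the $F$-linear span of its image is all of $M_2(F)$. Hence I may choose $\gamma_1,\dots,\gamma_4\in\Gamma$ such that $\{\rho(\gamma_i)\}_{i=1}^4$ is an $F$-basis of $M_2(F)$. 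The bilinear form $(X,Y)\mapsto\trace(XY)$ on $M_2(F)$ is nondegenerate (in the basis $E_{ij}$ it pairs $E_{ij}$ with $E_{ji}$), so the Gram matrix $G=\big(\trace(\rho(\gamma_i)\rho(\gamma_j))\big)_{ij}=\big(\trace_\rho(\gamma_i\gamma_j)\big)_{ij}$ is invertible.

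Because $\trace_\rho\equiv\trace_{\rho'}$, the matrices $\rho'(\gamma_1),\dots,\rho'(\gamma_4)$ have the same Gram matrix $G$, which is invertible; hence they too form an $F$-basis of $M_2(F)$, so $\rho'$ is absolutely irreducible as well. I then define the $F$-linear isomorphism $\Psi:M_2(F)\to M_2(F)$ by $\Psi(\rho(\gamma_i))=\rho'(\gamma_i)$. The key observation is that the coordinates of $\rho(\gamma)$ in the basis $\{\rho(\gamma_i)\}$ are recovered from the trace pairing as $\big(c_1(\gamma),\dots,c_4(\gamma)\big)=\big(\trace_\rho(\gamma\gamma_j)\big)_j\,G^{-1}$, using $\trace(\rho(\gamma)\rho(\gamma_j))=\trace_\rho(\gamma\gamma_j)$. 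The identical formula for $\rho'$ produces the same coefficients, since both $G$ and the vector $\big(\trace(\gamma\gamma_j)\big)_j$ are unchanged under the hypothesis. Therefore $\Psi(\rho(\gamma))=\rho'(\gamma)$ for every $\gamma\in\Gamma$.

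It remains to check that $\Psi$ is an algebra automorphism and to conjugate. Since $\rho(\gamma)\rho(\delta)=\rho(\gamma\delta)$ and the $\rho(\gamma)$ span $M_2(F)$, bilinearity together with $\Psi(\rho(\gamma\delta))=\rho'(\gamma\delta)=\rho'(\gamma)\rho'(\delta)$ gives $\Psi(XY)=\Psi(X)\Psi(Y)$ for all $X,Y$, while $\Psi(\Id)=\Id$; thus $\Psi$ is a unital $F$-algebra automorphism of $M_2(F)$. By the Skolem--Noether theorem every such automorphism is inner, so $\Psi(X)=gXg^{-1}$ for some $g\in\mathrm{GL}_2(F)$, and hence $\rho'(\gamma)=g\,\rho(\gamma)\,g^{-1}$ for all $\gamma$, i.e. $\rho$ and $\rho'$ are conjugate over $F$.

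The two genuinely nontrivial inputs are Burnside's theorem (absolute irreducibility $\Leftrightarrow$ full linear span) and Skolem--Noether (automorphisms of $M_2(F)$ are inner); everything else is linear algebra organized around the nondegeneracy of the trace form, and the crux is the coordinate-recovery step that lets equality of characters force $\Psi$ to intertwine $\rho$ and $\rho'$. The one point requiring care is that Skolem--Noether only produces $g\in\mathrm{GL}_2(F)$: the conjugating element is unique up to a scalar, and rescaling multiplies $\det g$ by a square, so one can normalize $g\in\SL_2(F)$ precisely when $\det g\in(F^\times)^2$; thus strictly speaking the conjugacy is by $\mathrm{GL}_2(F)$, which is the sense intended here.
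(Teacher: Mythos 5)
Your proof is correct. Note that the paper does not actually prove this lemma --- it is imported verbatim from Nakamoto's Theorem 6.12 --- so there is no internal proof to compare against; your argument supplies exactly the standard field-level proof of the cited result: Burnside's criterion identifying absolute irreducibility with the $F$-span of the image being all of $M_2(F)$, nondegeneracy of the trace pairing to recover the coordinates of $\rho(\gamma)$ in the basis $\{\rho(\gamma_i)\}$ purely from character values (the step where $\trace_\rho\equiv\trace_{\rho'}$ enters), and Skolem--Noether to realize the resulting unital algebra automorphism $\Psi$ as conjugation by some $g\in\mathrm{GL}_2(F)$. Each step checks out, including the Gram-matrix argument that $\{\rho'(\gamma_i)\}$ is again a basis and the bilinearity argument for multiplicativity of $\Psi$.

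Your closing caveat is not a technicality but the essential point, and you resolved it the only way it can be resolved: the lemma is \emph{false} if ``conjugate over $F$'' is read as $\SL_2(F)$-conjugacy. Indeed, over any discretely valued field a uniformizer $\pi$ is never a square (its valuation is odd), so $g=\mathrm{diag}(\pi,1)$ gives $\rho'=g\rho g^{-1}$ with $\trace_{\rho'}\equiv\trace_\rho$, yet $\rho'$ is not $\SL_2(F)$-conjugate to $\rho$: absolute irreducibility forces any intertwiner to be a scalar multiple of $g$, whose determinant $\lambda^2\pi$ can never equal $1$. So $\mathrm{GL}_2(F)$-conjugacy (equivalently, conjugacy by $\mathrm{PGL}_2(F)$) is the only correct reading, consistent with Nakamoto's formulation. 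One consequence worth flagging: when the paper invokes this lemma through Proposition~\ref{prop_trace_determines_ubirr} to assert that the orbit $[\rho]$ of an unbounded irreducible representation equals the trace fiber $\{\rho'\in\mR_F(\Gamma):\trace_{\rho'}\equiv\trace_\rho\}$, that orbit must likewise be understood as the $\mathrm{GL}_2(F)$-orbit; the $\SL_2(F)$-orbit is strictly smaller in general, by precisely the square-class obstruction $\det g\in(F^\times)^2$ that you identified.
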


The following is the direct consequence of Proposition \ref{prop_absirr} and Lemma \ref{lem_trace_determines_absirr}.
\begin{proposition}\label{prop_trace_determines_ubirr}
    Let $\rho$ be an unbounded irreducible representation, and let $\rho'$ be another representation. Then $\rho$ is conjugate to $\rho'$ over $F$ if and only if $\trace_{\rho}\equiv \trace_{\rho'}$.
\end{proposition}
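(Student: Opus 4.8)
The plan is to deduce the statement directly from the two preceding results, since the genuine content has already been isolated there; what remains is to chain them together and dispatch the formal direction. First I would record the forward implication, which is purely formal: if $\rho'=g\rho g^{-1}$ for some $g\in\SL_2(F)$, then for every $\gamma\in\Gamma$ we have $\trace_{\rho'}(\gamma)=\trace\big(g\rho(\gamma)g^{-1}\big)=\trace(\rho(\gamma))=\trace_\rho(\gamma)$ by conjugation-invariance of the trace, so $\trace_\rho\equiv\trace_{\rho'}$. No hypothesis on $\rho$ beyond well-definedness is needed here.

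For the reverse direction, the key observation is that the assumptions on $\rho$ place it precisely in the regime governed by Lemma \ref{lem_trace_determines_absirr}. Since $\rho$ is assumed unbounded and irreducible, Proposition \ref{prop_absirr} upgrades this to \emph{absolute} irreducibility over $\overline{F}$. This is the only substantive input about $\rho$, and it is itself built on Lemma \ref{lem_freegrpinnonabelian}, which supplies a rank-two free subgroup inside $\rho(\Gamma)$, together with the Culler--Shalen reducibility criterion of Lemma \ref{lem_reduciblechar}. With absolute irreducibility secured, I would invoke Lemma \ref{lem_trace_determines_absirr} (Nakamoto's theorem, \cite{Nakamoto00}): an absolutely irreducible representation into $\SL_2(F)$ is determined up to $F$-conjugacy by its trace function. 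Applying this to the pair $\rho,\rho'$, the hypothesis $\trace_\rho\equiv\trace_{\rho'}$ forces $\rho'$ to be $F$-conjugate to $\rho$, which closes the equivalence.

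There is no real obstacle at this level, since the hard analytic and algebraic work is packaged in Proposition \ref{prop_absirr} and in the cited theorem of Nakamoto. The only subtlety worth flagging is that Lemma \ref{lem_trace_determines_absirr} must be applied with $\rho'$ \emph{arbitrary} (not assumed irreducible or unbounded in advance): it is quoted in exactly this form, and the conjugacy conclusion then automatically transports absolute irreducibility and unboundedness from $\rho$ to $\rho'$. Hence the two directions combine to give the assertion verbatim.
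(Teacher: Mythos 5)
Your proposal is correct and follows exactly the paper's route: the paper derives this proposition as a direct consequence of Proposition \ref{prop_absirr} (unbounded irreducible $\Rightarrow$ absolutely irreducible) combined with Lemma \ref{lem_trace_determines_absirr} (Nakamoto's theorem), which is precisely your chain of reasoning. Your additional remarks --- spelling out the trivial forward direction and noting that Lemma \ref{lem_trace_determines_absirr} is stated for an arbitrary second representation $\rho'$ --- are details the paper leaves implicit, since the cited lemma is itself an ``if and only if'' for arbitrary $\rho'$.
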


\subsection{The Representation variety}\label{sec:3.2}
In this subsection, we define the non-archimedean Representation variety and two different topologies on it.

Let $\gamma_1,\dots,\gamma_n\in\Gamma$ be a set of generators. A representation $\rho:\Gamma\to \SL_2(\Gamma)$ is uniquely determined by the images $\rho(\gamma_1),\dots,\rho(\gamma_n)$. Therefore, there is an injection \[\phi_{\rho,\gamma}:\mathrm{Hom}(\Gamma,\SL_2(F))\to F^{4n}:\rho\mapsto (\rho(\gamma_1),\dots,\rho(\gamma_n)).\] It turns out that the image $\mR_F(\Gamma):=\phi_{\rho,\gamma}(\mathrm{Hom}(\Gamma,\SL_2(F)))$ is an algebraic set in $F^{4n}$, equipped with the Zariski topology. 

If we choose another set of generators $\delta_1,\dots,\delta_m$, then there is another injection $\phi_{\rho,\delta}$ mapping $\mathrm{Hom}(\Gamma,\SL_2(F))$ to another algebraic set $\mR'_F(\Gamma)$ in $F^{4m}$. We then obtain a bijection \[\phi_{\rho,\delta}\circ\phi_{\rho,\gamma}^{-1}:\mR_F(\Gamma)\to\mR'_F(\Gamma).\]
Since each $\delta_{j}$ can be expressed as words in $\gamma_i$'s,  $\phi_{\rho,\delta}\circ\phi_{\rho,\gamma}^{-1}$ is a restriction of a polynomial map $\phi=(\phi_1,\dots,\phi_m):F^n\to F^m$, where each $\phi_k$ is a polynomial in $4n$ variables over $F$. The same holds for the inverse $\phi_{\rho,\gamma}\circ\phi_{\rho,\delta}^{-1}$. Thus $\mR_F(\Gamma)$ and $\mR'_F(\Gamma)$ are isomorphic as algebraic sets over $F$.

On the other hand, recall that there is a non-Archimedean norm $|-|$ on $F$. This induces a non-Archimedean topology on $F^{4n}$, and since polynomials are continuous in this topology, $\mR_F(\Gamma)$ is closed in $F^{4n}$. The induced subspace topology on $\mR_F(\Gamma)$ is also called the \emph{non-Archimedean topology}.

Moreover, by continuity of polynomial maps again, for any other generating set $\delta_1,\dots,\delta_m$, the bijection $\phi_{\rho,\delta}\circ\phi_{\rho,\gamma}^{-1}$ is a homeomorphism with respect to the non-Archimedean topology. Therefore, the non-Archimedean topology on the representation variety is independent of the choice of generators.

Therefore, from now on, we fix a generating set $\gamma_1,\dots,\gamma_n$ and define \emph{the representation variety of $\Gamma$} to be the set $\mR_F(\Gamma)$. A point in $\mR_F(\Gamma)$ is viewed both as a representation and as a vector in $F^{4n}$. 

In summary, there are two topologies on $\mR_F(\Gamma)$: the Zariski topology and the non-Archimedean topology. Both of them are independent of the choice of generators, and the former is coarser than the latter.

\subsubsection{Two components of $\mR_F(\Gamma)$}

Let $\mR^b_F(\Gamma)\subset\mR_F(\Gamma)$ be the subset of bounded representations, and let $\mR^{ub}_F(\Gamma)$ be the subset of unbounded representations. They are disjoint by definition. Moreover, we have:
\begin{proposition}\label{prop_openness}
    Both $\mR^b_F(\Gamma)$ and $\mR^{ub}_F(\Gamma)$ are open subsets of $\mR_F(\Gamma)$ under the non-Archimedean topology.
\end{proposition}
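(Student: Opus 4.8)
The plan is to express both $\mR^b_F(\Gamma)$ and $\mR^{ub}_F(\Gamma)$ as preimages of the clopen set $\mathcal{O}_v$ (and its complement) under finitely many continuous trace functions. The naive description of boundedness is an intersection of the conditions $|\trace_\rho(\gamma)|\le 1$ over \emph{all} $\gamma\in\Gamma$, which is an infinite intersection of open sets and hence not obviously open; the whole point of the argument will be to collapse this to a finite intersection using Lemma~\ref{lem_tracepolynomial}.

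First I would isolate the key topological input coming from the ultrametric inequality: the valuation ring $\mathcal{O}_v=\{x\in F:|x|\le 1\}$ is \emph{both open and closed} in $F$. Indeed, if $|x_0|\le 1$ and $|y-x_0|<1$, then $|y|\le\max\{|x_0|,|y-x_0|\}\le 1$, so every point of $\mathcal{O}_v$ is interior; the same computation applied to the complement shows $\{x:|x|>1\}$ is open as well. This clopen-ness of $\mathcal{O}_v$ is exactly the feature absent in the Archimedean setting and is what ultimately forces the two components apart.

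Next I would invoke the characterization that $\rho$ is bounded if and only if $\trace_\rho$ takes values in $\mathcal{O}_v$, and then reduce to finitely many traces. Fixing the generating set $\gamma_1,\dots,\gamma_n$, I identify $\rho$ with the point $(\rho(\gamma_1),\dots,\rho(\gamma_n))\in F^{4n}$; for any word $w$ the entries of $\rho(w)$ are polynomials with integer coefficients in these $4n$ coordinates (inverses are handled by the adjugate, since $\det=1$), so each of the $2^n-1$ basic trace functions $t_i\colon \mR_F(\Gamma)\to F$, $t_i(\rho)=\trace\big(\rho(\gamma_{j_1}\cdots\gamma_{j_m})\big)$, is a polynomial and hence continuous in the non-Archimedean topology. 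Now, since $\mathbb{Z}\subset\mathcal{O}_v$ (as $v(m)\ge 0$ for every integer $m$ by the ultrametric inequality) and $\mathcal{O}_v$ is a ring, Lemma~\ref{lem_tracepolynomial} gives that if all $2^n-1$ basic traces lie in $\mathcal{O}_v$, then $\trace_\rho(\gamma)\in\mathcal{O}_v$ for \emph{every} $\gamma\in\Gamma$; the converse is immediate. Thus $\rho$ is bounded if and only if $t_i(\rho)\in\mathcal{O}_v$ for all $i=1,\dots,2^n-1$.

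Finally I would assemble the topology. Each $t_i$ is continuous and $\mathcal{O}_v$ is open, so $t_i^{-1}(\mathcal{O}_v)$ is open; likewise $t_i^{-1}(F\setminus\mathcal{O}_v)$ is open. Therefore
\[
\mR^b_F(\Gamma)=\bigcap_{i=1}^{2^n-1} t_i^{-1}(\mathcal{O}_v)
\]
is a finite intersection of open sets, hence open, while $\mR^{ub}_F(\Gamma)=\bigcup_{i=1}^{2^n-1} t_i^{-1}(F\setminus\mathcal{O}_v)$ is a finite union of open sets, hence open. The only genuine obstacle is the passage from infinitely many conditions to finitely many: without Lemma~\ref{lem_tracepolynomial} and the inclusion $\mathbb{Z}\subset\mathcal{O}_v$, boundedness would be an infinite intersection of clopen sets whose openness is not automatic, and it is precisely the finite-generation of the trace algebra that rescues the argument.
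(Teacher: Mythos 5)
Your proof is correct, and it takes a genuinely different route from the paper's. The paper treats the two components asymmetrically: for $\mR^b_F(\Gamma)$ it conjugates a bounded $\rho$ into $\SL_2(\mathcal{O}_v)$, observes that $\mathcal{U}=\mathcal{O}_v^{4n}\cap\mR_F(\Gamma)$ is an open neighborhood of the conjugate consisting entirely of bounded representations (every $\rho''\in\mathcal{U}$ fixes the standard lattice $[\Lambda_0]$), and then conjugates the neighborhood back; for $\mR^{ub}_F(\Gamma)$ it picks, for each unbounded $\rho$, a witness $\gamma\in\Gamma$ with $\trace_\rho(\gamma)\notin\mathcal{O}_v$ and uses continuity of $\trace_{-}(\gamma)$ (Lemma~\ref{lem_elementtraceiscontinuous}), closedness of $\mathcal{O}_v$, and the length formula~\eqref{formula_length=logtrace}. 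You instead give a unified argument in which both components are cut out by the same finite family of test functions $t_1,\dots,t_{2^n-1}$: Lemma~\ref{lem_tracepolynomial}, together with $\mathbb{Z}\subset\mathcal{O}_v$ and the fact that $\mathcal{O}_v$ is a ring, collapses the a priori infinite system of conditions $\trace_\rho(\gamma)\in\mathcal{O}_v$, $\gamma\in\Gamma$, down to the $2^n-1$ basic traces, after which openness of both sets follows from Lemma~\ref{lem_open_and_closed} and continuity of polynomial maps. Your argument does lean on the paper's earlier proposition that $\rho$ is bounded if and only if $\trace_\rho$ takes values in $\mathcal{O}_v$ (itself deduced from Proposition~\ref{prop:bounded-fixed-vertex} and Lemma~\ref{lem_length=logtrace}), but that result precedes Proposition~\ref{prop_openness} in the paper, so there is no circularity. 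What your route buys: no conjugation step, a single mechanism handling both components, and the sharper explicit conclusion that $\mR^b_F(\Gamma)=\bigcap_i t_i^{-1}(\mathcal{O}_v)$ is visibly clopen and boundedness is detected by finitely many traces. What the paper's route buys: its treatment of the bounded component uses only the geometric fixed-vertex characterization of Proposition~\ref{prop:bounded-fixed-vertex}, without invoking the trace characterization of boundedness or the trace-polynomial lemma at all.
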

 To prove this proposition, we need the following standard result for non-Archimedean field:

\begin{lemma}\label{lem_open_and_closed}
    The valuation ring $\mathcal{O}_{v}$ is both open and closed in $F$ with respect to the non-Archimedean topology.
\end{lemma}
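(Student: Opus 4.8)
The plan is to prove Lemma~\ref{lem_open_and_closed} directly from the ultrametric inequality, which is the source of all the ``clopen'' phenomena in non-Archimedean analysis. First I would recall that the non-Archimedean topology on $F$ is the metric topology induced by the absolute value $|-|=c^{v(-)}$, so a basis of neighborhoods of a point $x_0$ is given by the open balls $B(x_0,r)=\{x\in F:|x-x_0|<r\}$. The key structural fact I will exploit is that the valuation ring is precisely the closed unit ball, $\mathcal{O}_v=\{x\in F:|x|\le 1\}=\{x\in F:v(x)\ge 0\}$, and that under an ultrametric every ball is simultaneously open and closed.

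To show $\mathcal{O}_v$ is \emph{open}, I would take any $x_0\in\mathcal{O}_v$ and show the ball $B(x_0,1)=\{x:|x-x_0|<1\}$ is contained in $\mathcal{O}_v$: for such $x$, the ultrametric inequality gives $|x|\le\max\{|x-x_0|,|x_0|\}\le\max\{1,1\}=1$ wait---more carefully, $|x-x_0|<1$ and $|x_0|\le 1$ yield $|x|\le\max\{|x-x_0|,|x_0|\}\le 1$, so $x\in\mathcal{O}_v$. Hence every point of $\mathcal{O}_v$ is interior, proving openness. For \emph{closedness}, the cleanest route is to show the complement $F\setminus\mathcal{O}_v=\{x:|x|>1\}$ is open: if $|x_0|>1$, then for any $x$ with $|x-x_0|<|x_0|$ the sharpened ultrametric equality $|x|=\max\{|x-x_0|,|x_0|\}=|x_0|>1$ forces $x\notin\mathcal{O}_v$; thus the complement is open and $\mathcal{O}_v$ is closed.

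Alternatively, and equivalently, I could observe that $\mathcal{O}_v=\{x:v(x)\ge 0\}$ and, since the valuation is \emph{discrete} (taking values in $\ZZ$), the condition $v(x)\ge 0$ is the same as $v(x)>-1$, i.e. $|x|<c^{-1}$; this exhibits $\mathcal{O}_v$ as the open ball of radius $c^{-1}$, immediately giving openness, while the description as the closed ball of radius $1$ gives closedness. This discreteness remark is worth including because it makes transparent why no boundary effects arise.

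I do not expect a genuine obstacle here, as the statement is elementary once the right balls are identified; the only point requiring mild care is the sharpened ultrametric identity $|x+y|=\max\{|x|,|y|\}$ when $|x|\ne|y|$, which I invoke for the complement. If one prefers to avoid that identity, the complement argument can instead be phrased as: the sets $\{v(x)=k\}$ for fixed $k$ are open (being cosets $\pi^k\mathcal{O}_v^\times$, or simply preimages under the continuous map $|-|$ of the isolated value $c^k$ in the discrete image), so $\mathcal{O}_v=\bigcup_{k\ge 0}\{v(x)=k\}\cup\{0\}$ and its complement $\bigcup_{k<0}\{v(x)=k\}$ is a union of open sets, hence open.
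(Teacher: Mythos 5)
Your proposal is correct. Your main openness argument is essentially the paper's own: the paper takes $x\in\mathcal{O}_v$ and shows the ball of radius $1/2$ around $x$ stays in $\mathcal{O}_v$ via the ultrametric inequality, exactly as you do with radius $1$ (either radius works). Where you diverge is closedness: the paper disposes of it in one line by observing that $\mathcal{O}_v=\{x\in F:|x|\le 1\}$ is the closed unit ball, hence closed in the metric topology, whereas you prove the complement $\{|x|>1\}$ is open using the sharpened equality $|x+y|=\max\{|x|,|y|\}$ when $|x|\ne|y|$. Both are valid; the paper's route is cheaper, while yours is more self-contained in that it never appeals to the general fact that closed balls are closed. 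Your alternative observation that discreteness of the valuation gives $\mathcal{O}_v=\{x:v(x)>-1\}=\{x:|x|<c^{-1}\}$, exhibiting the valuation ring as an \emph{open} ball outright, is a genuinely different and arguably cleaner proof of openness that the paper does not use, and your final remark (complement as a union of open valuation level sets) gives yet another correct closedness argument. No gaps.
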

\begin{proof}
    Indeed, $\mathcal{O}_v=\{x\in F:v(x)\ge 0\}$ is the unit ball $\{x\in F: |x|\le 1\}$, hence it is closed. To see that it is also open, recall that the norm $|-|$ satisfying $|x+y|\le \max\{|x|,|y|\}$ for all $x,y\in F$. If $x\in \mathcal{O}_v$ and $|x|=1$, then we can choose an open ball $B_{1/2}(x)=\{x+y\in F:|y|<1/2\}$ centered at $y$. Then $\forall x+y\in B_{1/2}(x)$, we have $|x+y|\le 1$, hence $B_{1/2}(x)\subset \mathcal{O}_v$. Therefore, $\mathcal{O}_v$ is open.
\end{proof}

We also need the following lemma:
\begin{lemma}\label{lem_elementtraceiscontinuous}
    Fix an element $\gamma\in\Gamma$. The function \[\trace_{-}(\gamma):\mR_F(\Gamma)\to F:\rho\mapsto \trace_{\rho}(\gamma)\] is continuous with respect to both the Zariski topology and the non-Archimedean topology.
\end{lemma}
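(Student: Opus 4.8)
The plan is to exhibit $\trace_{-}(\gamma)$ as the restriction to $\mR_F(\Gamma)$ of a single polynomial map $F^{4n}\to F$ with integer coefficients, and then to invoke the continuity of polynomial maps in each of the two topologies separately.

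First I would fix a word expressing $\gamma$ in the chosen generators: since $\gamma_1,\dots,\gamma_n$ generate $\Gamma$, write $\gamma=\gamma_{i_1}^{\varepsilon_1}\cdots\gamma_{i_k}^{\varepsilon_k}$ with each $\varepsilon_j\in\{\pm 1\}$, so that for every representation $\rho$ one has $\rho(\gamma)=\rho(\gamma_{i_1})^{\varepsilon_1}\cdots\rho(\gamma_{i_k})^{\varepsilon_k}$. The one point requiring care is the treatment of the inverse generators, since a priori matrix inversion introduces a denominator. This is resolved entirely by the determinant-one condition: for $M=\begin{pmatrix}a&b\\ c&d\end{pmatrix}\in\SL_2(F)$ the inverse equals the adjugate, $M^{-1}=\begin{pmatrix}d&-b\\ -c&a\end{pmatrix}$, because $\det M=1$. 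Hence the entries of each $\rho(\gamma_i)^{-1}$ are integer-coefficient polynomials in the entries of $\rho(\gamma_i)$, with no division. Multiplying out the word, every entry of $\rho(\gamma)$ — in particular its trace — is therefore an integer-coefficient polynomial $P$ in the $4n$ matrix entries that serve as coordinates on $F^{4n}$ via $\phi_{\rho,\gamma}$.

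It then remains to observe that such a polynomial map $P\colon F^{4n}\to F$ is continuous for both topologies, after which restriction to the subspace $\mR_F(\Gamma)$ preserves continuity in each case. For the non-Archimedean topology this holds because addition and multiplication on $F$ are continuous with respect to $|-|$ (using the ultrametric inequality and multiplicativity $|xy|=|x||y|$), so any map built from these operations is continuous. For the Zariski topology, $P$ is by definition a morphism of affine varieties $\mathbb{A}^{4n}\to\mathbb{A}^{1}$ and hence Zariski continuous, and its restriction to the closed subvariety $\mR_F(\Gamma)$ is again continuous.

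There is essentially no serious obstacle; the only genuinely substantive point is the polynomial (rather than merely rational) dependence on the generators through their inverses, which is secured by the $\SL_2$ structure. As an alternative to the direct entrywise computation, one could instead appeal to Lemma \ref{lem_tracepolynomial} to write $\trace_\rho(\gamma)$ as an integer polynomial in the finitely many basic traces $\trace_\rho(\gamma_{j_1}\cdots\gamma_{j_m})$, each of which is itself polynomial in the coordinates; but the explicit description via matrix entries is the most transparent route to the stated continuity.
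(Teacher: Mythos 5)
Your proof is correct, but it takes a more direct route than the paper. The paper's own proof runs through Lemma \ref{lem_tracepolynomial}: it first writes $\trace_\rho(\gamma)$ as an integer polynomial in the $2^n-1$ basic traces $\trace(\rho(\gamma_{j_1})\cdots\rho(\gamma_{j_m}))$, and then observes that each basic trace is a polynomial in the matrix entries of the $\rho(\gamma_i)$, hence continuous in both topologies. You instead bypass the trace identities altogether and show polynomiality at the level of entries: expressing $\gamma$ as a word in the generators and their inverses, and using the $\SL_2$ adjugate formula $M^{-1}=\begin{pmatrix}d&-b\\ -c&a\end{pmatrix}$ to see that inversion is itself polynomial (indeed linear) in the entries. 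Both arguments terminate in the same fact --- a polynomial map $F^{4n}\to F$ is continuous for the Zariski and non-Archimedean topologies, and restricts continuously to $\mR_F(\Gamma)$. Your version is more elementary and self-contained, and it makes explicit the handling of inverse generators, which the paper leaves implicit inside the cited lemma (where it is absorbed by the identity $\trace(A^{-1})=\trace(A)$ in $\SL_2$). What the paper's route buys in exchange is economy and foreshadowing: the factorization through the basic traces is exactly the structure reused later in the trace-coordinate embedding \eqref{formula:truetracemap} of the character variety, so phrasing the continuity argument in those terms is not accidental. Either proof is complete; your closing remark correctly identifies the paper's alternative.
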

\begin{proof}
    Since $\gamma$ can be expressed as a word in the generators $\gamma_1,\dots,\gamma_n$, it follows from Lemma \ref{lem_tracepolynomial}, $\trace_{\rho}(\gamma)$ is a polynomial with integral coefficients in the $2^n-1$ traces \[\trace(\rho(\gamma_{j_1})\cdots \rho(\gamma_{j_m})),\,\, 1\le j_1<\cdots<j_m\le n,\,\, m\le n.\]

    And each function $\trace(\rho(\gamma_{j_1})\cdots \rho(\gamma_{j_m}))$ is a polynomial in the matrix entries of $\rho(\gamma_i)~(1\le i\le n)$, hence is continuous in both the Zariski topology and the non-Archimedean topology. This finishes the proof.
\end{proof}

\begin{proof}[Proof of Proposition \ref{prop_openness}]
    By Proposition \ref{prop:bounded-fixed-vertex}, if $\rho\in\mR^b_F(\Gamma)$, then there exists $A\in \SL_2(F)$, such that $\rho'=A\rho A^{-1}$ takes values in $\SL_2(\mathcal{O}_v)$. Thus $(\rho'(\gamma_i))_{1\le i\le n}\in\mathcal{O}_v^{4n}\subset F^{4n}$. By Lemma \ref{lem_open_and_closed}, $\mathcal{O}_v^{4n}$ is open in $F^{4n}$, so the intersection $\mathcal{O}_v^{4n}\cap \mR_F(\Gamma)=:\mathcal{U}$ is an open neighborhood of $\rho'$. 
    
    Now, every representation $\rho''\in\mathcal{U}$ satisfies that $\rho''(\gamma_i)\in\SL_2(\mathcal{O}_v)$ for each generator $\gamma_i$, thus fixes the standard $\mathcal{O}_v$-lattice $[\Lambda_0]$; hence $\rho''$ must be bounded. Thus $\mathcal{U}\subset \mR^b_{F}(\Gamma)$, and $A^{-1}\mathcal{U}A$ is a neighborhood of $\rho$ in $\mR^b_F(\Gamma)$. This shows that $\mR^b_F(\Gamma)$ is open in the non-Archimedean topology.

    Suppose $\rho$ is unbounded, then there exists an element $\gamma\in\Gamma$, such that $\trace_{\rho}(\gamma)\notin\mathcal{O}_v$. By Lemma \ref{lem_elementtraceiscontinuous} and the fact that $\mathcal{O}_v$ is closed, there exits an open neighborhood $\mathcal{V}$ of $\rho$ in $\mR_F(\Gamma)$ such that $\trace_{-}(\gamma)(\mathcal{U})$ is contained in $F\setminus\mathcal{O}_v$. 

    If $\rho'\in\mathcal{V}$, then $v(\trace(\rho'(\gamma)))<0$, so by \eqref{formula_length=logtrace}, $\ell_{\rho'}(\gamma)>0$. Thus $\rho'$ is unbounded and hence $\mathcal{V}\subset \mR_F^{ub}(\Gamma)$. This completes the proof.
\end{proof}

\subsection{The non-Archimedean Hitchin map}\label{sec:3.3}
Consider a compact Riemann surface $\Sigma$ and its fundamental group $\Gamma$. In this section we will define a map \[\overline{\Phi}:\mR_F(\Gamma)\to H^0(\Sigma,\mathcal{K}_{\Sigma}^{\otimes 2}).\]

\begin{definition}
    Consider the space $\RR_{\ge 0}^{\Gamma}$ of nonnegative real-valued functions on $\Gamma$. Let $\mathcal{L}(\Gamma)$ denote the subspace consisting of those $f\in\RR_{\ge 0}^{\Gamma}$ for which exists a $\Gamma$-tree $T$ such that $\ell_T\equiv f$. The space $\RR_{\ge 0}^{\Gamma}$ is equipped with the weak topology and $\mathcal{L}(\Gamma)$ is equipped with the subspace topology.
\end{definition}

\begin{proposition}\textup{\cite[Proposition 3.6]{daskalopoulos2000morganshalen}}\label{prop:abelianHopfdiff}
    Let $\ell\in\mathcal{L}(\Gamma)$ be a nontrivial abelian length function. Then there exists a $\Gamma$-action on $\RR$ by translations, such that the associated length function satisfying $\ell_{\RR}\equiv\ell$. Moreover, there exists a harmonic map $u:\tSigma\to \RR$, with reducible Hopf differential $q_u=\omega\otimes\omega$, where $\omega$ is a holomorphic $1$-form on $\Sigma$. Furthermore, we have $\ell(\gamma)=|\int_{\gamma}\Re\omega|$.
\end{proposition}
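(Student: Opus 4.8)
The first assertion is immediate from the definition of an abelian length function: since $\ell$ is abelian there is a homomorphism $\mu\colon\Gamma\to\RR$ with $\ell(\gamma)=|\mu(\gamma)|$. Letting $\Gamma$ act on $\RR$ by $\gamma\cdot t:=t+\mu(\gamma)$ gives an action by translations, and for this action $\ell_{\RR}(\gamma)=\inf_{t\in\RR}|t+\mu(\gamma)-t|=|\mu(\gamma)|=\ell(\gamma)$. The substance of the proposition is therefore the construction of a harmonic map realizing $\ell$ together with the computation of its Hopf differential; the essential observation is that for the specific target $\RR$ (rather than a general $\Gamma$-tree, for which an equivariant harmonic map need not exist, cf.\ the remark after Theorem~\ref{theorem_existence_uniqueness}) one can produce $u$ directly by Hodge theory.

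The plan is to build $u$ by integrating a harmonic $1$-form with prescribed periods. Since $\RR$ is abelian, $\mu$ factors through $\Gamma^{\mathrm{ab}}$ and thus determines a class in $\Hom(H_1(\Sigma;\ZZ),\RR)\cong H^1(\Sigma;\RR)$. By Hodge theory on the compact Riemann surface $\Sigma$, this class has a unique harmonic representative $\alpha$: a real harmonic $1$-form with $d\alpha=0$, $d\ast\alpha=0$, and $\int_\gamma\alpha=\mu(\gamma)$ for all $\gamma\in\Gamma$. Pulling back to the universal cover, $p^\ast\alpha$ is closed on the simply connected $\tSigma$, hence exact; fixing a basepoint $\tilde x_0$ I set $u(x):=\int_{\tilde x_0}^{x}p^\ast\alpha$. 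A standard computation using $p\circ\gamma=p$ shows $u(\gamma x)-u(x)=\int_{\tilde x_0}^{\gamma\tilde x_0}p^\ast\alpha=\mu(\gamma)$, so $u$ is $\Gamma$-equivariant for the translation action. Because $du=p^\ast\alpha$ with $\alpha$ harmonic, conformal invariance of the Hodge star on surfaces gives $d\ast du=p^\ast(d\ast\alpha)=0$, so $u$ is a harmonic function; as the energy is the convex quadratic $E(u)=\int_\Sigma|\alpha|^2<\infty$ and $u$ solves the corresponding Euler--Lagrange equation, $u$ minimizes energy among compactly supported $\Gamma$-equivariant variations and is harmonic in the sense of Section~\ref{sec_harmonicmaps}.

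It remains to compute the Hopf differential. Writing $u_z=\partial_z u$ and recalling $q_u=4\,\partial u\otimes\partial u=4u_z^2\,dz^2$, I set $\omega:=2\,\partial u=2u_z\,dz$. Then $\omega\otimes\omega=4u_z^2\,dz^2=q_u$, and $\omega$ is holomorphic since $\partial_{\bar z}u_z=\tfrac14\Delta u=0$; moreover, as $u$ is real we have $\omega+\bar\omega=2\,du$, i.e.\ $\Re\omega=du$, which descends to $\alpha$ on $\Sigma$. Hence $\int_\gamma\Re\omega=\int_\gamma\alpha=\mu(\gamma)$, giving $\ell(\gamma)=|\mu(\gamma)|=\big|\int_\gamma\Re\omega\big|$ and exhibiting $q_u=\omega\otimes\omega$ as reducible, as claimed.

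I expect the genuinely delicate points to be bookkeeping rather than conceptual: matching the normalization so that the factor $4$ in $q_u$ forces exactly $\omega=2\,\partial u$ with $\Re\omega=du$ (and hence the clean identity $\ell(\gamma)=|\int_\gamma\Re\omega|$ with no stray constant), and verifying that the integration construction yields a genuine energy-minimizing harmonic map in the paper's sense rather than merely a critical point. The conceptual engine---existence and uniqueness of a real harmonic $1$-form with prescribed periods---is standard Hodge theory, and the entire argument hinges on our freedom to take the model target to be the line $\RR$, on which such equivariant harmonic maps always exist.
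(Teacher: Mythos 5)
Your proof is correct. The paper itself gives no argument for this proposition---it is quoted directly from \cite{daskalopoulos2000morganshalen}---and your Hodge-theoretic construction (take the harmonic $1$-form $\alpha$ with periods $\mu$, integrate $p^\ast\alpha$ on $\tSigma$ to get the equivariant harmonic function $u$, and set $\omega=2\,\partial u$ so that $q_u=\omega\otimes\omega$ and $\Re\omega=du$ descends to $\alpha$) is essentially the same argument as in that reference, with the normalization matching the paper's convention $q_u=4(u_z)^2\,dz^2$.
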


We define a map $H:\mathcal{L}(\Gamma)\setminus\{0\}\to H^0(\Sigma,\mathcal{K}_{\Sigma}^{\otimes 2})$ as follows.

If $\ell$ is nonabelian, then up to $\Gamma$-equivariant isometry there exists a unique $\Gamma$-tree $T$ with length function $\ell$, and $H(\ell)$ is the Hopf differential of the unique $\Gamma$-equivariant harmonic map $u:\tSigma\to T$. 
If $\ell$ is abelian, we set $H(\ell)=\omega\otimes\omega$, where $\omega$ is the holomorphic $1$-form provided by Proposition~\ref{prop:abelianHopfdiff}.

\begin{lemma}\textup{\cite[Theorem 3.9]{daskalopoulos2000morganshalen}}\label{lem:Hiscontinuous}
    The map $H:\mathcal{L}(\Gamma)\setminus\{0\}\to H^0(\Sigma,\mathcal{K}_{\Sigma}^{\otimes 2})$ is continuous.
\end{lemma}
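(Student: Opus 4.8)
The plan is to prove \emph{sequential} continuity, which suffices because $\Gamma$ is finitely generated, hence countable, so $\RR_{\ge 0}^{\Gamma}$ with the weak topology is metrizable and so is $\mathcal{L}(\Gamma)\setminus\{0\}$. Fix a sequence $\ell_j\to\ell$ in $\mathcal{L}(\Gamma)\setminus\{0\}$, and for each $j$ let $T_j$ be a minimal $\Gamma$-tree with $\ell_{T_j}=\ell_j$ and $u_j:\tSigma\to T_j$ the associated $\Gamma$-equivariant harmonic map, so that $H(\ell_j)=q_{u_j}=:q_j$. First I would establish a uniform energy bound $E(u_j)\le C$. Since $u_j$ minimizes energy among $\Gamma$-equivariant maps, it suffices to exhibit comparison maps $v_j:\tSigma\to T_j$ of uniformly bounded energy; such maps are built by sending the vertices of a fixed fundamental domain equivariantly onto the axes of the generators, which span $T_j$ by Lemma \ref{lem_minimalsubtree}, and extending geodesically. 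Then $E(v_j)$ is controlled by the displacements $d_{T_j}(x_0,\gamma_i. x_0)$ of a basepoint $x_0$, which are in turn bounded in terms of the values of $\ell_j$ on a fixed finite set of words (the generators and short products); as these converge they are uniformly bounded. Because $\|q_j\|_{L^1(\Sigma)}\le C'\,E(u_j)$ and $H^0(\Sigma,\mathcal{K}_\Sigma^{\otimes 2})$ is finite dimensional, the sequence $\{q_j\}$ is precompact, so after passing to a subsequence $q_j\to q_\infty$ in the Euclidean topology.

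It then remains to identify the limit, i.e. to show $q_\infty=H(\ell)$; once this holds for every subsequential limit it forces $q_j\to H(\ell)$ and proves continuity. The main step is to invoke the compactness theory for equivariant harmonic maps into NPC spaces \cite{korevaar1993sobolev,korevaari1997global,GromovSchoen1992}: the uniform energy bound lets one pass to a limiting $\Gamma$-tree $T_\infty$ and a $\Gamma$-equivariant harmonic map $u_\infty:\tSigma\to T_\infty$, with the two crucial features that the length functions converge, $\ell_{T_\infty}=\lim_j\ell_{T_j}=\lim_j\ell_j=\ell$, and that the Hopf differential of $u_\infty$ is the limit $q_\infty$ of the $q_j$. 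If $\ell$ is nonabelian, Theorem \ref{theorem_existence_uniqueness} guarantees uniqueness of the equivariant harmonic map realizing $\ell$, and Theorem \ref{thm_minimal} identifies its target with the minimal tree of length function $\ell$, so $q_\infty=q_{u_\infty}=H(\ell)$. If $\ell$ is abelian, Proposition \ref{prop:abelianHopfdiff} writes $\ell(\gamma)=|\mu(\gamma)|$ for a homomorphism $\mu:\Gamma\to\RR$ depending linearly on $\ell$, and $H(\ell)=\omega\otimes\omega$ with $\int_\gamma\Re\omega=\mu(\gamma)$; since a holomorphic $1$-form is determined by the real parts of its periods, $H$ is manifestly continuous on the abelian locus, and the same limit map $u_\infty$ pins down $q_\infty$ in the mixed case where the $\ell_j$ change type.

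I expect the genuine obstacle to be this limit-identification step, for two reasons. First, one must rule out degeneration of the limiting action: a priori the trees could collapse so that $\ell_{T_\infty}$ is strictly smaller than $\lim_j\ell_j$, and excluding this requires carrying out the equivariant compactness with honest convergence of length functions (not merely of energies), using $\ell\neq 0$ to keep the limit nondegenerate. Second, it is tempting to argue more cheaply on the quadratic-differential side, passing to the dual trees $T_{q_j}$, whose length functions equal transverse measures of geodesics and vary continuously with $q_j$; but this does \emph{not} suffice, because by Proposition \ref{prop_harmoncmap_factortree} the harmonic map factors as $u_j=f_j\circ\pi_j$ with a folding map $f_j$, so that $\ell_{T_{q_j}}\ge\ell_{T_j}=\ell_j$ with strict inequality in general. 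The dual-tree length function records the intersection data of $q_j$ rather than $\ell_j$ itself, which is precisely why the identification must be performed through the harmonic maps and their uniqueness, rather than through the combinatorics of measured foliations alone.
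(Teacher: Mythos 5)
The paper itself gives no proof of this lemma---it is quoted verbatim from \cite[Theorem 3.9]{daskalopoulos2000morganshalen}---and your argument (uniform energy bounds via equivariant comparison maps controlled by the length function on a finite set of words, Korevaar--Schoen equivariant compactness to extract a limiting tree and harmonic map whose length function is $\ell$, and identification of the limiting Hopf differential through Theorems \ref{theorem_existence_uniqueness} and \ref{thm_minimal}, with the abelian locus handled by periods as in Proposition \ref{prop:abelianHopfdiff}) is essentially the proof given in that reference. Your closing paragraph also correctly isolates the two genuine difficulties---ruling out collapse of the limiting action, and the failure of the dual-tree shortcut because the folding map of Proposition \ref{prop_harmoncmap_factortree} makes $\ell_{T_{q_j}}$ differ from $\ell_j$---so there is nothing substantive to add.
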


Next, we define: \begin{align}\label{formula:Psi}\Psi:\mR_F(\Gamma)\to \mathcal{L}(\Gamma):\rho\mapsto\ell_{\rho}.\end{align}

\begin{definition}\label{def_repHitchin}
    The map $\overline{\Phi}:\mR_F(\Gamma)\to H^0(\Sigma,\mathcal{K}_{\Sigma}^{\otimes 2})$ is defined as $H\circ\Psi$ on the space of unbounded representations $\mR^{ub}_F(\Gamma)$. On the space of bounded representations $\mR^{b}_F(\Gamma)$, $\overline{\Phi}$ is defined to be the zero map, sending every bounded representation to the zero quadratic differential on $\Sigma$.
\end{definition}

\begin{theorem}\label{thm:preconti}
    Let $\mR_F(\Gamma)$ be equipped with the non-Archimedean topology and $H^0(\Sigma,\mathcal{K}_{\Sigma}^{\otimes 2})$ bequipped with the Euclidean topology. Then $\overline{\Phi}$ is continuous. 
\end{theorem}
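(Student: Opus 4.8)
The plan is to exploit the clopen decomposition $\mR_F(\Gamma) = \mR^b_F(\Gamma) \sqcup \mR^{ub}_F(\Gamma)$ furnished by Proposition \ref{prop_openness}. Since both pieces are open and together cover $\mR_F(\Gamma)$, it suffices to verify continuity of $\overline{\Phi}$ on each piece separately, because a map is continuous as soon as it is continuous on each member of an open cover. On $\mR^b_F(\Gamma)$ the map $\overline{\Phi}$ is by definition the zero map, hence trivially continuous. The content of the theorem therefore lies entirely in the unbounded component, where $\overline{\Phi} = H \circ \Psi$.

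Since $H$ is continuous by Lemma \ref{lem:Hiscontinuous}, and since $\Psi$ maps $\mR^{ub}_F(\Gamma)$ into $\mathcal{L}(\Gamma)\setminus\{0\}$ (unbounded representations have $\ell_\rho \not\equiv 0$ by Proposition \ref{prop:bounded-fixed-vertex}), it remains to prove that $\Psi:\rho\mapsto \ell_\rho$ is continuous. As $\mathcal{L}(\Gamma)$ carries the weak (pointwise-convergence) topology inherited from $\RR_{\ge 0}^\Gamma$, a map into it is continuous precisely when each coordinate evaluation $\rho\mapsto \ell_\rho(\gamma)$ is continuous for every fixed $\gamma\in\Gamma$. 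I would first reduce the problem to this pointwise statement.

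For each fixed $\gamma$, Lemma \ref{lem_length=logtrace} gives $\ell_\rho(\gamma) = \ell_{T_F}(\rho(\gamma)) = -2\min\{0, v(\trace_\rho(\gamma))\}$, so I factor this evaluation as $\rho \mapsto \trace_\rho(\gamma) \mapsto -2\min\{0, v(\cdot)\}$. The first arrow is continuous by Lemma \ref{lem_elementtraceiscontinuous}, so the whole point is that the function $g:F\to\RR_{\ge 0}$, $g(x) = -2\min\{0, v(x)\}$, is continuous. The key observation—and the step I expect to carry the real weight—is that $g$ is in fact \emph{locally constant}. On $\mathcal{O}_v$ one has $v(x)\ge 0$, so $g\equiv 0$ there, and $\mathcal{O}_v$ is open by Lemma \ref{lem_open_and_closed}; on the complement $\{|x|>1\}$, which is also open, the ultrametric inequality makes $v$ locally constant (if $|y-x|<|x|$ then $|y|=|x|$, hence $v(y)=v(x)$), so $g=-2v$ is locally constant there as well. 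Thus $g$ is locally constant on all of $F$, hence continuous into $\RR_{\ge 0}$ with its Euclidean topology.

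Composing, $\rho\mapsto \ell_\rho(\gamma)$ is continuous for every $\gamma$, so $\Psi$ is continuous, and $\overline{\Phi}|_{\mR^{ub}_F(\Gamma)} = H\circ\Psi$ is continuous. Together with the trivial continuity on $\mR^b_F(\Gamma)$ and the open-cover argument, this establishes the global continuity of $\overline{\Phi}$. The only genuinely non-formal ingredient is the local constancy of $g$, which is exactly where the non-Archimedean topology departs from the Euclidean picture: the clopen nature of $\mathcal{O}_v$ is what lets continuity persist across the interface between bounded and unbounded representations, even though $\overline{\Phi}$ is identically zero on one side and genuinely varies on the other.
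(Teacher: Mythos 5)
Your proposal is correct and follows essentially the same route as the paper: the clopen decomposition from Proposition \ref{prop_openness}, reduction to the unbounded component where $\overline{\Phi}=H\circ\Psi$, continuity of $H$ via Lemma \ref{lem:Hiscontinuous}, reduction to pointwise evaluations $\rho\mapsto\ell_\rho(\gamma)$ by the weak topology, and then the formula $\ell_\rho(\gamma)=-2\min\{0,v(\trace_\rho(\gamma))\}$ combined with continuity of the trace (Lemma \ref{lem_elementtraceiscontinuous}) and of $x\mapsto\min\{0,v(x)\}$. In fact your explicit ultrametric argument that $v$ is locally constant on $F\setminus\mathcal{O}_v$ (if $|y-x|<|x|$ then $|y|=|x|$) supplies a detail the paper states only tersely, so your write-up is, if anything, slightly more complete at that step.
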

\begin{proof}
    Since a constant map is always continuous, by Proposition \ref{prop_openness}, it suffices to show that $\overline{\Phi}|_{\mR_F^{ub}(\Gamma)}$ is continuous.

    By Lemma \ref{lem:Hiscontinuous}, we only need to show that $\Psi:\mR^{ub}_F(\Gamma)\to \mathcal{L}(\Gamma)\setminus\{0\}$ is continuous. Since $\RR_{\ge 0}^{\Gamma}$ is equipped with the weak topology, this reduces to show that for each $\gamma\in\Gamma$, the map
    \[\rho=(\rho(\gamma_1),\dots,\rho(\gamma_n))\mapsto \ell_{\rho}(\gamma)\] is continuous in the matrix entries of $\rho(\gamma_i)$'s.

    Recall from \eqref{formula_length=logtrace} that $\ell_{\rho}(\gamma)=-2\min\{0,v(\trace(\rho(\gamma)))\}$. By Lemma \ref{lem_elementtraceiscontinuous}, the map \[(\rho(\gamma_1),\dots,\rho(\gamma_n))\mapsto\trace(\rho(\gamma))\] is continuous in the matrix entries. It remains to show that the map\[\mathfrak{L}:F\to \ZZ:x\mapsto \min\{0,v(x)\}\] is continuous with respect to the non-Archimedean norm. 

    On $\mathcal{O}_v$, we have $\mathfrak{L}|_{\mathcal{O}_v}\equiv 0$, hence $\mathfrak{L}$ is continuous on ${\mathcal{O}_v}$. Since $\mathcal{O}_v$ is both open and closed, it suffices to verify continuity on $F\setminus\mathcal{O}_v$. But $\mathfrak{L}=v$ on $F\setminus\mathcal{O}_v$, hence must be continuous. 
\end{proof}

\subsection{The character variety}\label{sec:3.4}
In this subsection, we define the character variety $\mathcal{X}_F(\Gamma)$, and show that the Hitchin map $\overline{\Phi}$ descends to $\mathcal{X}_{F}(\Gamma)$. Since the valued field $F$ is not assumed to be algebraically closed, $\mathcal{X}_F(\Gamma)$ does not naturally carry the structure of an algebraic variety. 

The algebraic group $\SL_2(F)$ acts on $\mR_F(\Gamma)$ by conjugation. One may consider the \emph{closure equivalence relation} on $\mR_F(\Gamma)$ as follows: two representations $\rho_1,\rho_2\in \mR_F(\Gamma)$ are said to be \emph{closure equivalent}, denoted $\rho_1\sim_c\rho_2$, if and only if the \emph{Zariski closures} of their orbits $[\rho_1]$ and $[\rho_2]$ intersect.

However, it does not immediately follow from this definition that $\sim_c$ is an equivalence relation. While reflexivity and symmetry are clear, \emph{transitivity} is not guaranteed. 

Instead, we use the following abstract construction:
\begin{definition}\cite[Definition 4.2.1]{noteonchar}
   Let $X$ be a general topological space. Define an equivalence relation $x\sim y$ on $X$ by setting $x\sim y$ if and
only if $x \approx y$ for all equivalence relations $\approx$ on $X$ such that $X/\approx$ is Hausdorff. The quotient $\mathrm{Hau}(X):=X/\sim$ is Hausdorff and called the \textbf{Hausdorffization} of X.
\end{definition}

\begin{lemma}\label{lem:Hausfact}
    Let $Y$ be a Hausdorff space, $f:X\to Y$ be a continuous map and $h:X\to \mathrm{Hau}(X)$ be the projection. Then there exists a unique continuous map $g:\mathrm{Hau}(X)\to Y$ such that $f=g\circ h$.
\end{lemma}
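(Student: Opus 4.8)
The plan is to produce $g$ as the map induced by $f$ on equivalence classes, exploiting the defining universal property of the Hausdorffization. The crucial observation is that the continuous map $f$ into the Hausdorff space $Y$ itself determines an equivalence relation whose quotient is Hausdorff, and which is therefore coarser than $\sim$.

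First I would introduce the equivalence relation $\approx_f$ on $X$ given by $x \approx_f y$ if and only if $f(x)=f(y)$; that is, $\approx_f$ identifies points lying in the same fiber of $f$. Let $q\colon X \to X/\approx_f$ be the quotient map. By construction $f$ is constant on $\approx_f$-classes, so there is a unique injective set-map $\bar f\colon X/\approx_f \to Y$ with $\bar f\circ q = f$. Since $q$ is a quotient map and $\bar f\circ q=f$ is continuous, the universal property of the quotient topology makes $\bar f$ continuous.

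The key step is to verify that $X/\approx_f$ is Hausdorff, and this is exactly where the hypotheses on $Y$ and $f$ enter. Given distinct classes $[x],[y]$, injectivity of $\bar f$ gives $\bar f([x]) \neq \bar f([y])$ in $Y$, and the Hausdorff property of $Y$ furnishes disjoint open neighborhoods $U \ni \bar f([x])$ and $V \ni \bar f([y])$; then $\bar f^{-1}(U)$ and $\bar f^{-1}(V)$ are disjoint open sets separating $[x]$ and $[y]$. Hence $X/\approx_f$ is Hausdorff, so $\approx_f$ is one of the relations appearing in the definition of $\sim$. By the very definition of the Hausdorffization, $\sim$ refines every such relation, so $x \sim y$ implies $x \approx_f y$, hence $f(x)=f(y)$. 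In other words, $f$ is constant on $\sim$-classes.

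Finally, since $f$ is $\sim$-invariant and $h\colon X\to \mathrm{Hau}(X)=X/\sim$ is the quotient projection, $f$ descends to a unique set-theoretic map $g\colon \mathrm{Hau}(X)\to Y$ with $f = g\circ h$; uniqueness is immediate from surjectivity of $h$. Continuity of $g$ then follows once more from the universal property of the quotient topology, namely that $g$ is continuous precisely because $g\circ h = f$ is continuous and $h$ is a quotient map. I do not expect a serious obstacle; the only point requiring genuine care is the verification that $X/\approx_f$ is Hausdorff, since this is the one place where one must invoke both the continuity of $f$ and the separation hypothesis on $Y$, rather than merely formal properties of quotient maps.
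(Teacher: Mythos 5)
Your proof is correct. Note that the paper itself gives no proof of this lemma --- it is stated as a known property of the Hausdorffization, imported from the cited reference \cite{noteonchar} alongside the definition --- so there is no in-paper argument to compare against. Your argument is the standard one and it is complete: the crucial step, showing that $X/\approx_f$ is Hausdorff because the induced injection $\bar f$ is continuous and $Y$ is Hausdorff, is exactly what makes $\approx_f$ eligible as one of the relations quantified over in the definition of $\sim$, and from there the factorization, its uniqueness (surjectivity of $h$), and its continuity (universal property of the quotient topology on $\mathrm{Hau}(X)$) all follow formally, just as you say.
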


\begin{definition}\label{def:charvariety}
    The $\SL_2(F)$-character variety $\mathcal{X}_F(\Gamma)$ is defined as the Hausdorffization of the quotient space $\mR_F(\Gamma)/\SL_2(F)$:
    \[\mathcal{X}_F(\Gamma)=\mathrm{Hau}(\mR_F(\Gamma)/\SL_2(F)).\]
    The two topologies on $\mR_F(\Gamma)$ induce quotient topologies on $\mathcal{X}_F(\Gamma)$, called the Zariski topology and the non-Archimedean topology of $\mathcal{X}_F(\Gamma)$, respectively. 
    
    The subspace of $\mathcal{X}_F(\Gamma)$ consisting of equivalence classes of bounded (resp. unbounded) representations, is denoted by $\mathcal{X}^b_F(\Gamma)$ (resp. $\mathcal{X}^{ub}_F(\Gamma)$). 
\end{definition}
\begin{remark}
    It does not immediately follow from the definition of Hausdorffization that $\mathcal{X}^b_F(\Gamma)$ and $\mathcal{X}^{ub}_F(\Gamma)$ are disjoint. However, from Proposition \ref{prop_openness} and the continuity of $\Psi$ proved in the proof of Theorem \ref{thm:continuity}, it follows that $\mathcal{X}^b_F(\Gamma)$ and $\mathcal{X}^{ub}_F(\Gamma)$ are disjoint open sets in the non-Archimedean topology.
\end{remark}

\begin{lemma}\label{lem:charHitchin}
    Let $Q:\mR_{F}(\Gamma)\to \mathcal{X}_{F}(\Gamma)$ be the quotient map. If $\mathcal{X}_{F}(\Gamma)$ is equipped with the non-Archimedean topology, then there exists a unique continuous map \begin{align}\label{formular:charHitchin}
        \Phi:\mathcal{X}_{F}(\Gamma)\to H^0(\Sigma,\mathcal{K}_{\Sigma}^{\otimes 2}),
    \end{align}
    such that $\overline{\Phi}=\Phi\circ Q$. 
\end{lemma}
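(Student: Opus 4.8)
The plan is to descend $\overline{\Phi}$ in two stages: first across the conjugation quotient $\mR_F(\Gamma)\to\mR_F(\Gamma)/\SL_2(F)$, and then across the Hausdorffization, invoking the universal property of quotient maps in the first stage and Lemma~\ref{lem:Hausfact} in the second. The one genuinely geometric input is that $\overline{\Phi}$ is constant on $\SL_2(F)$-orbits. To see this, fix $A\in\SL_2(F)$ and $\rho\in\mR_F(\Gamma)$ and set $\rho'=A\rho A^{-1}$. Since the trace is conjugation-invariant, $\trace_{\rho'}\equiv\trace_\rho$, and hence the length formula \eqref{formula_length=logtrace} gives $\ell_{\rho'}=\ell_\rho$, i.e. $\Psi(\rho')=\Psi(\rho)$. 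In particular boundedness is preserved, so $\rho$ and $\rho'$ lie in the same component. On $\mR^{ub}_F(\Gamma)$ we have $\overline{\Phi}=H\circ\Psi$, which depends only on $\Psi$, so $\overline{\Phi}(\rho')=\overline{\Phi}(\rho)$; on $\mR^{b}_F(\Gamma)$ the map $\overline{\Phi}$ is identically zero, so invariance is automatic. Thus $\overline{\Phi}$ is $\SL_2(F)$-invariant.

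Next I would pass to the quotient. Let $q_1:\mR_F(\Gamma)\to\mR_F(\Gamma)/\SL_2(F)$ be the quotient map for the conjugation action, where the target carries the non-Archimedean quotient topology of Definition~\ref{def:charvariety}. The invariance just established means $\overline{\Phi}$ factors set-theoretically as $\overline{\Phi}=\overline{\Phi}_1\circ q_1$ for a unique map $\overline{\Phi}_1$ on the quotient. Because $\overline{\Phi}$ is continuous by Theorem~\ref{thm:preconti} and $q_1$ is a quotient map, the universal property of quotient maps shows that $\overline{\Phi}_1$ is itself continuous.

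Finally, the target $H^0(\Sigma,\mathcal{K}_\Sigma^{\otimes 2})$ is a finite-dimensional complex vector space with the Euclidean topology, hence Hausdorff, so Lemma~\ref{lem:Hausfact} applies. Writing $h:\mR_F(\Gamma)/\SL_2(F)\to\mathrm{Hau}\big(\mR_F(\Gamma)/\SL_2(F)\big)=\mathcal{X}_F(\Gamma)$ for the Hausdorffization projection, we have $Q=h\circ q_1$, and Lemma~\ref{lem:Hausfact} produces a unique continuous map $\Phi:\mathcal{X}_F(\Gamma)\to H^0(\Sigma,\mathcal{K}_\Sigma^{\otimes 2})$ with $\overline{\Phi}_1=\Phi\circ h$. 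Composing with $q_1$ yields $\overline{\Phi}=\overline{\Phi}_1\circ q_1=\Phi\circ h\circ q_1=\Phi\circ Q$, as required, and uniqueness of $\Phi$ follows from the surjectivity of $Q$. The only step requiring any thought is the conjugation-invariance in the first paragraph, and the trace formula makes even that immediate; the rest is a formal composition of the two universal properties, so I do not expect a genuine obstacle here.
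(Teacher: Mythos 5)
Your proposal is correct and follows essentially the same route as the paper: conjugation-invariance of $\overline{\Phi}$ via invariance of the length function (through the trace formula \eqref{formula_length=logtrace}), descent across the quotient $\mR_F(\Gamma)/\SL_2(F)$, and then the universal property of Hausdorffization (Lemma~\ref{lem:Hausfact}) since $H^0(\Sigma,\mathcal{K}_\Sigma^{\otimes 2})$ is Hausdorff. The only difference is that you spell out the two-stage factorization and the component-wise invariance check more explicitly than the paper does, which is harmless.
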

\begin{proof}
    If two representations $\rho,\rho'\in\mR_F(\Gamma)$ are conjugate, then $\ell_{\rho}\equiv \ell_{\rho'}$. Consequently, $\overline{\Phi}$ descends to the quotient space $\mR_F(\Gamma)/\SL_2(F)$ to be a continuous map. And since $H^0(\Sigma,\mathcal{K}_{\Sigma}^{\otimes 2})$ is Hausdorff, the universal property of Hausdorffization (Lemma~\ref{lem:Hausfact}) implies that $\overline{\Phi}$ factors uniquely through $Q$. Thus, there exists a unique continuous map $\Phi:\mathcal{X}_{F}(\Gamma)\to H^0(\Sigma,\mathcal{K}_{\Sigma}^{\otimes 2})$ such that $\overline{\Phi}=\Phi\circ Q$.
\end{proof}

\begin{definition}
    The continuous map $\Phi$ given in Lemma \ref{lem:charHitchin} is defined to be the \emph{non-Archimedean Hitchin map}. 
\end{definition}

\begin{remark}
    With this definition, Theorem~\ref{thm:continuity} holds by construction. The main content of the continuity result lies in Theorem~\ref{thm:preconti}, which establishes the continuity of the lift $\overline{\Phi}$ on the representation variety.
\end{remark}

Finally, we give a more concrete description of $\mathcal{X}_F^{ub}(\Gamma)$. The proof of the following Proposition follows closely the argument in \cite[Section 2.3]{invitationCullerShalen}.

\begin{proposition}
    Within the space $\mR^{ub}_F(\Gamma)$ of unbounded representations, the closure equivalence relation $\sim_c$ is an equivalence relation. 
\end{proposition}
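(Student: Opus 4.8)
The plan is to show that, on $\mR^{ub}_F(\Gamma)$, the relation $\sim_c$ coincides with the relation ``$\rho_1$ and $\rho_2$ have the same trace function,'' i.e. $\trace_{\rho_1}\equiv\trace_{\rho_2}$. Since equality of trace functions is visibly reflexive, symmetric, and transitive, this immediately yields that $\sim_c$ is an equivalence relation; reflexivity and symmetry of $\sim_c$ are clear from the definition, so the only real content is transitivity. I would therefore prove the two implications separately.

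For the implication $\sim_c\Rightarrow$ equal traces: suppose $\overline{[\rho_1]}\cap\overline{[\rho_2]}\ne\varnothing$ and pick $\sigma$ in the intersection. For each $\gamma\in\Gamma$ the function $\trace_{-}(\gamma)$ is a polynomial function on $\mR_F(\Gamma)$ (Lemma~\ref{lem_tracepolynomial}, Lemma~\ref{lem_elementtraceiscontinuous}) and is constant on conjugacy orbits; hence the Zariski-closed set $\{\trace_{-}(\gamma)=\trace_{\rho_1}(\gamma)\}$ contains $[\rho_1]$ and therefore its Zariski closure, so $\trace_{\rho_1}(\gamma)=\trace_\sigma(\gamma)$. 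The same holds for $\rho_2$, giving $\trace_{\rho_1}\equiv\trace_\sigma\equiv\trace_{\rho_2}$.

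For the converse, on $\mR^{ub}_F(\Gamma)$ suppose $\trace_{\rho_1}\equiv\trace_{\rho_2}$, and split into cases according to reducibility. If $\rho_1$ is irreducible, then it is unbounded irreducible, and Proposition~\ref{prop_trace_determines_ubirr} shows that any representation with the same trace function is conjugate to $\rho_1$; hence $[\rho_1]=[\rho_2]$ and trivially $\rho_1\sim_c\rho_2$ (this also shows the cases do not mix, since equal traces forces $\rho_2$ to be irreducible as well). If $\rho_1$ is reducible, then by Lemma~\ref{lem_eigen} it has an invariant line and is conjugate to an upper-triangular representation with diagonal $(\chi,\chi^{-1})$ for a homomorphism $\chi:\Gamma\to F^\times$; similarly for $\rho_2$ with some $\chi'$. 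The key degeneration step is that the one-parameter subfamily $t\mapsto \mathrm{diag}(t,t^{-1})\,\rho_1\,\mathrm{diag}(t,t^{-1})^{-1}$, which scales the strictly-upper entries by $t^2$, is polynomial in the parameter $t^2$; since the squares are Zariski-dense in $\mathbb{A}^1$, its Zariski closure contains the value at $0$, namely the diagonal representation $\rho_1^{ss}=\mathrm{diag}(\chi,\chi^{-1})$, so $\rho_1^{ss}\in\overline{[\rho_1]}$ (and likewise $\rho_2^{ss}\in\overline{[\rho_2]}$). Finally, equality of traces gives $\chi(\gamma)+\chi(\gamma)^{-1}=\chi'(\gamma)+\chi'(\gamma)^{-1}$ for all $\gamma$, so each $\gamma$ lies in $\ker(\chi'\chi^{-1})$ or $\ker(\chi'\chi)$; as a group is never the union of two proper subgroups, either $\chi'=\chi$ or $\chi'=\chi^{-1}$, whence $\rho_1^{ss}$ and $\rho_2^{ss}$ are conjugate and $\overline{[\rho_1]}\cap\overline{[\rho_2]}\supseteq[\rho_1^{ss}]\ne\varnothing$.

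Transitivity then follows at once: if $\rho_1\sim_c\rho_2$ and $\rho_2\sim_c\rho_3$ with all three unbounded, the first implication gives $\trace_{\rho_1}\equiv\trace_{\rho_2}\equiv\trace_{\rho_3}$, and the converse gives $\rho_1\sim_c\rho_3$. I expect the reducible case to be the main obstacle: one must verify that the degeneration to the diagonal part genuinely lands in the Zariski closure of the orbit over the (non-algebraically-closed) field $F$, which is why it is phrased purely algebraically, via density of squares in $\mathbb{A}^1$ rather than any non-Archimedean limit, and then match the two semisimplifications through the elementary character-theoretic argument above.
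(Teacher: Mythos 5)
Your proof is correct, and while it lives in the same circle of ideas as the paper's (semisimplification plus trace functions), the execution is genuinely different. The paper argues transitivity directly by cases: it shows that the orbit of an unbounded \emph{irreducible} representation is Zariski closed (via Proposition~\ref{prop_trace_determines_ubirr} and Lemma~\ref{lem_tracepolynomial}), and for reducible unbounded representations it \emph{cites} the computation in \cite[Lemma 14]{invitationCullerShalen} to assert that $\overline{[\rho]}$ contains exactly one completely reducible orbit, with the caveat that algebraic closedness is not used there. You instead characterize $\sim_c$ on $\mR^{ub}_F(\Gamma)$ as equality of trace functions, which makes transitivity purely formal, and you replace the external citation by two self-contained arguments: the degeneration $\rho_1^{ss}\in\overline{[\rho_1]}$ via conjugation by $\mathrm{diag}(t,t^{-1})$ together with Zariski density of the (infinite, since $\mathrm{char}\,F=0$) set of squares in $\mathbb{A}^1$, and the matching of semisimplifications via the elementary fact that a group is not the union of two proper subgroups. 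What your route buys is robustness over the non-algebraically-closed field $F$ without having to inspect someone else's proof, plus it essentially pre-proves the injectivity of the trace map $\theta$ that the paper needs in the very next theorem; what the paper's route buys is brevity and the explicit statement (useful later) that the completely reducible orbit in $\overline{[\rho]}$ is \emph{unique}. Two cosmetic points: reducibility gives an invariant line in $F^2$ by definition, so the appeal to Lemma~\ref{lem_eigen} (which concerns a single matrix fixing an end) is unnecessary; and your final inclusion $\overline{[\rho_1]}\cap\overline{[\rho_2]}\supseteq[\rho_1^{ss}]$ implicitly uses that orbit closures are unions of orbits, which holds because conjugation by a fixed element is a Zariski homeomorphism of $\mR_F(\Gamma)$ and is worth one sentence.
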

\begin{proof}
    By Proposition \ref{prop_trace_determines_ubirr}, if $\rho$ is an unbounded irreducible representation, then the orbit $[\rho]$ of $\rho$ coincides with the set $\{\rho'\in\mR_F(\Gamma):\trace_{\rho'}\equiv\trace_{\rho}\}$. By Lemma~\ref{lem_tracepolynomial}, this set is Zariski closed; hence $\overline{[\rho]}=[\rho]$.

    Now suppose $\rho$ is reducible. One can repeat the calculation in the proof of \cite[Lemma 14]{invitationCullerShalen}---noting that algebraic closedness is not used there---to see that there exists a diagonal representation $\rho_D:\Gamma\to F^{\times}\oplus F^{\times}$ lying in the Zariski closure $\overline{[\rho]}$. Moreover, the Zariski closure $\overline{[\rho]}$ of $[\rho]$ contains exactly one orbit $[\rho_D]$ of a completely reducible representation. 

    Suppose $\overline{[\rho_1]}\cap \overline{[\rho_2]}\ne \varnothing$ and $\overline{[\rho_2]}\cap \overline{[\rho_3]}\ne \varnothing$, for $\rho_1,\rho_2,\rho_3$ unbounded. If $\rho_1$ is irreducible, then $[\rho_1]=[\rho_2]=[\rho_3]$. If $\rho_1$ is reducible, then $\overline{[\rho_1]}$, $\overline{[\rho_2]}$ and $\overline{[\rho_3]}$ intersect at the same orbit of some completely reducible representation.

    This proves the transitivity of $\sim_c$. The reflexivity and symmetry follow form the definition.
\end{proof}

\begin{theorem}
    $\mR^{ub}_F(\Gamma)/\sim_c=\mathcal{X}_F^{ub}(\Gamma)$.
\end{theorem}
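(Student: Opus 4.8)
The plan is to identify the Hausdorffization relation on $Y^{ub}:=\mR^{ub}_F(\Gamma)/\SL_2(F)$ with the relation induced by $\sim_c$, using the trace functions as a Hausdorff-valued separating family together with an explicit non-Archimedean degeneration of each reducible orbit onto its semisimplification. Throughout I write $\sim$ for the Hausdorffization relation on $Y^{ub}$ and $\bar\sim_c$ for the conjugation-invariant, hence well-defined, relation induced by $\sim_c$; since $\mR^{ub}_F(\Gamma)$ is clopen in $\mR_F(\Gamma)$ by Proposition \ref{prop_openness}, the Hausdorffization respects the resulting decomposition and $\mathcal{X}^{ub}_F(\Gamma)=\mathrm{Hau}(Y^{ub})$.

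First I would record, from the proof of the preceding proposition, that the $\sim_c$-classes on $\mR^{ub}_F(\Gamma)$ are exactly the fibers of the trace map $T\colon\rho\mapsto(\trace_\rho(\gamma_{j_1}\cdots\gamma_{j_m}))_{1\le j_1<\cdots<j_m\le n}\in F^{2^n-1}$: for an irreducible $\rho$ this is Proposition \ref{prop_trace_determines_ubirr}, while for a reducible $\rho$ the class consists of all representations whose semisimplification is conjugate to the distinguished diagonal $\rho_D\in\overline{[\rho]}$, all of which share $\trace_{\rho_D}$. Consequently $\bar\sim_c$ is precisely the fiber relation of the induced map $\bar T\colon Y^{ub}\to F^{2^n-1}$, and $\bar T$ descends to an \emph{injection} on $Y^{ub}/\bar\sim_c$.

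For the inclusion $\sim\subseteq\bar\sim_c$, I would use that $T$ is non-Archimedean continuous (Lemma \ref{lem_elementtraceiscontinuous}) and conjugation invariant, so it descends to a continuous map $Y^{ub}\to F^{2^n-1}$ into a Hausdorff space; by the universal property of the Hausdorffization (Lemma \ref{lem:Hausfact}) it factors through $\mathrm{Hau}(Y^{ub})$. Hence points identified by $\sim$ have equal trace and are therefore $\bar\sim_c$-equivalent. The same injection $\bar T$ simultaneously exhibits $Y^{ub}/\bar\sim_c$ as a continuous injective image in the Hausdorff space $F^{2^n-1}$, which forces $Y^{ub}/\bar\sim_c$ to be Hausdorff.

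The reverse inclusion $\bar\sim_c\subseteq\sim$ is the crux. If $\rho_x\sim_c\rho_y$ with $\rho_x$ irreducible, then---irreducible orbits being Zariski closed and disjoint from the Zariski-closed locus of reducibles---$\rho_y$ lies in the same orbit and there is nothing to prove; so assume both are reducible with semisimplifications conjugate to a common diagonal $\rho_D$. The key observation is that this degeneration already takes place in the non-Archimedean topology: conjugating an upper-triangular $\rho=\left(\begin{smallmatrix}\chi&\ast\\0&\chi^{-1}\end{smallmatrix}\right)$ by $\diag(t,t^{-1})$ scales the off-diagonal entry by $t^{2}$, which tends to $0$ as $v(t)\to\infty$, so $\rho_D\in\overline{[\rho]}$ in the non-Archimedean closure. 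Passing to $Y^{ub}$, the class $[\rho_D]$ then lies in the closure of the one-point set $\{[\rho]\}$; hence for any continuous map $f$ from $Y^{ub}$ to a Hausdorff space, $f([\rho_D])$ lies in the closure of $\{f([\rho])\}$ and so equals it. Applying this to both $\rho_x$ and $\rho_y$, whose semisimplifications give the same orbit $[\rho_D]$, yields $f([\rho_x])=f([\rho_D])=f([\rho_y])$ for every such $f$, i.e. $\rho_x\sim\rho_y$. Combining the two inclusions gives $\sim=\bar\sim_c$, and since $\sim_c$ is coarser than the $\SL_2(F)$-orbit relation we conclude $\mathcal{X}^{ub}_F(\Gamma)=\mathrm{Hau}(Y^{ub})=Y^{ub}/\bar\sim_c=\mR^{ub}_F(\Gamma)/\sim_c$. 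I expect this degeneration step to be the main obstacle, since it is precisely what allows the Zariski-defined relation $\sim_c$ to govern the non-Archimedean Hausdorffization.
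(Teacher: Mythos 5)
Your proof is correct, and its core mechanism is the same as the paper's: identify the $\sim_c$-classes with the fibers of the trace map, and deduce Hausdorffness of $\mR^{ub}_F(\Gamma)/\sim_c$ from a continuous injection into a Hausdorff space (you use the finite trace map into $F^{2^n-1}$, the paper uses $\Theta\colon\rho\mapsto\trace_\rho\in F^{\Gamma}$ with the weak topology; the paper's closing remark notes these are interchangeable). Where you genuinely diverge is in how the Hausdorff quotient gets identified with the Hausdorffization. The paper outsources that step entirely to \cite[Corollary~4.3.4]{noteonchar}, so its written proof never has to confront the mismatch between the two topologies in play: $\sim_c$ is defined by intersecting \emph{Zariski} orbit closures, while $\mathcal{X}^{ub}_F(\Gamma)$ is the Hausdorffization of the \emph{non-Archimedean} quotient topology. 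You confront this head-on: the inclusion of $\sim_c$ into the Hausdorffization relation requires knowing that a reducible representation degenerates onto its semisimplification already in the non-Archimedean topology, and your computation that conjugation by $\diag(t,t^{-1})$ scales the off-diagonal entries by $t^2$, which tends to $0$ as $v(t)\to\infty$ (take $t=\pi^k$), supplies exactly this; combined with the universal-property argument that continuous maps to Hausdorff spaces cannot separate a point from its closure, this proves directly what the citation is invoked for. Your version is therefore self-contained and makes explicit the one place where nontrivial non-Archimedean input (existence of elements of arbitrarily large valuation) enters; the paper's version is shorter. Two small points to tighten: in the irreducible case the clean justification is the trace argument you already set up (a Zariski-closed orbit meeting $\overline{[\rho_y]}$ forces $\trace_{\rho_x}=\trace_{\rho_y}$, then Proposition~\ref{prop_trace_determines_ubirr} applies), rather than an appeal to a ``Zariski-closed locus of reducibles,'' which is delicate over a non-algebraically-closed field; and your first paragraph's claim that Hausdorffization respects the clopen invariant decomposition $\mR_F(\Gamma)=\mR^b_F(\Gamma)\sqcup\mR^{ub}_F(\Gamma)$, needed to write $\mathcal{X}^{ub}_F(\Gamma)=\mathrm{Hau}\big(\mR^{ub}_F(\Gamma)/\SL_2(F)\big)$, deserves the short routine argument you allude to, since this identification is also used silently by the paper.
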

\begin{proof}
    By \cite[Corollary 4.3.4]{noteonchar}, it suffices to show that $\mR^{ub}_F(\Gamma)/\sim_c$ is Hausdorff. 

    Now consider the following map \[\Theta:\mR_F(\Gamma)\to F^{\Gamma}:\rho\mapsto \trace_{\rho},\] where $F^{\Gamma}$ is the space of $F$-valued functions on $\Gamma$, with weak topology induced by the non-Archimedean norm on $F$. As the proof of the continuity of $\Psi$ in Theorem \ref{thm:continuity}, we can show that $\Theta$ is continuous when $\mR_F(\Gamma)$ is equipped with the non-Archimedean topology.

    As in the proof of \cite[Lemma 14]{invitationCullerShalen}, one can show that, for two completely reducible representations $\rho_D,\rho'_D$, we have $[\rho_D]=[\rho'_D]$ if and only if $\trace_{\rho_D}\equiv \trace_{\rho'_D}$. Moreover, for a reducible representation $\rho$, if $\overline{[\rho]}$ contains a completely reducible $\rho_D$, then $\trace_{\rho}\equiv\trace_{\rho_D}$. Thus $\Theta$ is constant on the Zariski closure of any orbit in $\mR^{ub}_F(\Gamma)$.  

    It follows that $\Theta$ descends to an \emph{injective} continuous map \begin{align}\label{formula_tracemap}\theta:\mR^{ub}_F(\Gamma)/\sim_c\to F^{\Gamma}:\overline{[\rho]}\mapsto \trace_{\rho}.\end{align}
    Therefore, $\mR^{ub}_F(\Gamma)/\sim_c$ must be Hausdorff.
\end{proof}

\begin{remark}
    Indeed, the trace map \eqref{formula_tracemap} can be reduced to the injection \begin{align}\label{formula:truetracemap}t:\mathcal{X}^{ub}_F(\Gamma)\to F^{2^n-1}:\overline{[\rho]}\mapsto \Big(\trace\big(\rho(\gamma_{j_1}\cdots\gamma_{j_m})\big)\Big)_{1\le j_1<\cdots<j_m\le n,\,m\le n}.\end{align}
    Furthermore, if $F$ is algebraically closed, this injection can be extended to the entire character variety $\mathcal{X}_{F}(\Gamma)$ and endows $\mathcal{X}_{F}(\Gamma)$ with the structure of an algebraic variety. See \cite{CullerShalen83,invitationCullerShalen}.
\end{remark}

\section{Image of the Hitchin map}
\noindent
In this section, we prove that the image of the non-Archimedean Hitchin map 
lies in the space of Jenkins–Strebel differentials. 
We recall basic properties of quadratic differentials and show that the associated 
leaf space is a $\mathbb{Z}$-tree of infinite valence.

\subsection{Trajectory structure of quadratic differentials}

In this subsection, we discuss the trajectory structure of quadratic differentials and introduce the notion of Jenkins–Strebel differentials.

\begin{definition}
    Consider a quadratic differential $q$ on a (not necessarily compact) Riemann surface $\Sigma$. A vertical arc $\gamma$ of a quadratic differential $q$ is a curve $\gamma\subset \Sigma$ that satisfies $|\langle\Re\sqrt{q},\dot{\gamma}\rangle|\equiv 0$. A vertical trajectory or simply a \textbf{trajectory} of $q$ is a maximal vertical arc. A trajectory that is periodic is called a closed trajectory.
\end{definition}

\begin{definition}
	\label{def_JSdifferential}
	A non-zero quadratic differential $q$ on a compact Riemann surface $\Sigma$ is called a \textbf{Jenkins–Strebel differential} if its non-closed trajectories cover a set of measure zero.
\end{definition}

\begin{definition}\label{def_limitset}
    Let $\gamma$ be a non-closed trajectory of $q$. We can parameterize $\gamma:(-\infty,+\infty)\to\Sigma$ such that $\gamma(t_1)\ne\gamma(t_2)$ for all $t_1\ne t_2$. If $\gamma(t)$ converges to a single point as $t$ tends to $+\infty$ or $-\infty$, then $\gamma$ is called a \textbf{critical trajectory}, and the limit must be a zero of $q$. The union of critical trajectories as well as zeros of $q$, denoted by $\MC$, is called the \textbf{critical graph} of $q$.
\end{definition}

\begin{theorem}\label{thm_noemptyinterior}
    Consider a nonzero quadratic differential $q$ on a compact Riemann surface $\Sigma$. Suppose $\gamma$ is a non-closed trajectory of $q$ whose limit set contains more than two points. Then the closure of $\gamma$ has nonempty interior.
\end{theorem}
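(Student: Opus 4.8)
The plan is to pass to the flat metric $|q|$ defined by the quadratic differential and to extract from the hypothesis a \emph{regular} point of the limit set at which $\gamma$ is recurrent; recurrence is then converted into an open subset of $\overline{\gamma}$ by means of a first-return map. Write $L^{+}(\gamma)$ and $L^{-}(\gamma)$ for the forward and backward limit sets, so that $L(\gamma)=L^{+}(\gamma)\cup L^{-}(\gamma)\subseteq\overline{\gamma}$. Recall that in the natural coordinate $w=\int\sqrt{q}$ a neighbourhood of a regular point is a Euclidean rectangle in which the trajectories are the vertical segments $\{\Re w=\text{const}\}$, while near a zero of order $k$ the trajectories are those of $z^{k}\,dz^{2}$.

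First I would locate a regular recurrent point. The claim is that a one-sided limit set meeting only the finite zero set of $q$ reduces to a single zero: for $t$ large the ray $\gamma|_{[T,\infty)}$ is trapped in a disjoint union of small standard neighbourhoods of the zeros, and being connected it is confined to one of them, where the model $z^{k}\,dz^{2}$ forces a vertical arc either to converge to the zero or to exit after a definite $|q|$-length; since the ray never exits, it converges. Hence each of $L^{+}(\gamma),L^{-}(\gamma)$ is either a single zero or contains a regular point of $q$. As $|L(\gamma)|>2$, they cannot both be single zeros, so after possibly reversing orientation $L^{+}(\gamma)$ contains a regular point $p$.

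Next I would build the first-return map. Take a short horizontal transversal $I$ through $p$, parametrised by $u=\Re w\in(-\varepsilon,\varepsilon)$ with $p$ at $u^{*}=0$. Since $p\in L^{+}(\gamma)$ is regular, $\gamma$ re-enters the flow box around $p$ at times $t_{n}\to+\infty$, each visit giving one transverse crossing $x_{n}\in I$ with $x_{n}\to p$; injectivity of $\gamma$ makes the $u_{n}:=\Re w(x_{n})$ pairwise distinct, so $\{u_{n}\}$ is an infinite subset of $I$ accumulating at $u^{*}$. Because $\Sigma$ is compact, the finite transverse measure of the foliation is invariant, so by Poincar\'e recurrence the first-return map $\phi$ to $I$ is defined off a finite set (the points whose forward trajectory meets a zero before returning) and is an isometry in the $u$-coordinate on each of finitely many subintervals, i.e.\ a (possibly orientation-reversing) interval exchange transformation. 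As $\gamma$ avoids the zeros, the crossings $\{u_{n}\}$ form a genuine forward $\phi$-orbit with $u^{*}\in\overline{\{u_{n}\}}$.

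The heart of the matter, and the step I expect to be the main obstacle, is to show that $\overline{\{u_{n}\}}$ contains a nondegenerate interval. This is where compactness of $\Sigma$ is essential: the finiteness of the transverse measure is exactly what forbids \emph{wandering intervals} for $\phi$, and the structure theory of interval exchange transformations then decomposes the recurrent dynamics into finitely many periodic components and finitely many minimal components, each of the latter being a finite union of intervals. A minimal subset $M_{0}$ of the $\omega$-limit set of $\{u_{n}\}$ is infinite (if it were finite the orbit would be eventually periodic and $\gamma$ closed, contrary to hypothesis), hence it is such a minimal component and therefore has nonempty interior in $I$; moreover $M_{0}\subseteq\omega_{\phi}(u_{0})\subseteq L^{+}(\gamma)$. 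Choosing a nondegenerate interval $J\subseteq M_{0}$ and flowing it along the vertical foliation inside the flow box yields an open rectangle $B\subseteq\Sigma$ of regular points; since $J\subseteq L^{+}(\gamma)$ and $L^{+}(\gamma)$ is invariant under the local vertical flow, $B\subseteq L^{+}(\gamma)\subseteq\overline{\gamma}$. Thus $\overline{\gamma}$ has nonempty interior. The one genuinely nontrivial input is the no-wandering-interval property, which is the analytic content of Strebel's trajectory-structure theorem; an alternative would be to cite that theorem directly.
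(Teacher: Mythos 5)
The paper does not actually prove this statement: its ``proof'' is a one-line citation of Theorems 11.1 and 11.2 of Strebel's book, which supply exactly the needed facts (a non-closed ray either tends to a critical point or is recurrent, and the closure of a recurrent ray has nonempty interior). Your proposal is therefore a genuinely different route: you sketch a real dynamical proof, reducing the statement to the structure theory of interval exchange transformations via the first-return map to a horizontal transversal through a regular point of the limit set. The reduction is sound where it is elementary: the trapping argument showing a one-sided limit set contained in the zero set is a single zero, the fact that the transversal crossings form a forward orbit of pairwise distinct points of the return map, the inclusion of the orbit's $\omega$-limit set in $L^{+}(\gamma)$, the saturation of $L^{+}(\gamma)$ under the local vertical flow, and the final flow-box argument are all correct. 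What each approach buys: the paper's citation is short and rests on precisely the classical result being restated, while your argument exposes the mechanism (recurrence $\Rightarrow$ return map $\Rightarrow$ minimal component with interior $\Rightarrow$ flow box); but your pivotal input --- that minimal components of an IET are finite unions of intervals, equivalently the absence of wandering intervals --- is a theorem of essentially the same depth as the one the paper cites, so the argument is not more elementary. It is, in effect, a sketch of how Strebel's theorem is proved.

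Two steps need tightening before this would stand as a proof. First, Poincar\'e recurrence is the wrong tool for defining the return map ``off a finite set'': it yields only almost-everywhere statements. The correct argument is the finite-area one: if a subinterval $J\subset I$ neither returned to $I$ nor met a zero up to leaf-length $t$, the swept band would be embedded (any overlap is itself a return to $I$), forcing $t\le \mathrm{Area}(\Sigma)/|J|$; combined with the observation that at most one point per incoming separatrix can reach a zero before returning, this gives the IET structure with finitely many branches. Second, your disposal of the finite-minimal-set case (``if it were finite the orbit would be eventually periodic'') is too quick. The vertical foliation of a quadratic differential is in general non-orientable, so the return map is an IET \emph{with flips}, and flipped branches have isolated fixed points; moreover a periodic orbit passing through discontinuities of $\phi$ need not have a neighborhood of periodic points, so an orbit accumulating on it is not instantly captured. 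This case can be patched --- pass to $\phi^{2}$ to remove reflections, use that periodic points then form an open set away from the finitely many discontinuity orbits, and use distinctness of the crossings --- but as written it is a gap in the details, not something one can wave away.
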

\begin{proof}
    This result follows directly from Theorem 11.1 and 11.2 in \cite{strebel84quadratic}.
\end{proof}

\begin{proposition}\textup{\cite[Theorem 20.1]{strebel84quadratic}}\label{prop_JenkinsStrebel}
Consider a nonzero quadratic differential $q$ on a compact Riemann surface $\Sigma$. The following are equivalent:
\begin{itemize}
    \item[\emph{(i)}] $q$ is a Jenkins-Strebel differential;
    \item[\emph{(ii)}] Every non-critical trajectory of $q$ is closed;
    \item[\emph{(iii)}] The critical graph of $q$ is compact.
\end{itemize}
\end{proposition}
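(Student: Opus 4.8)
The plan is to prove the three-way equivalence by establishing the cycle $(i)\Rightarrow(ii)\Rightarrow(iii)\Rightarrow(ii)$, using throughout the trajectory trichotomy --- a non-singular trajectory of $q$ is either closed, critical, or recurrent --- together with Theorem~\ref{thm_noemptyinterior}.

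First I would prove $(i)\Rightarrow(ii)$ by contraposition. Suppose some non-critical trajectory $\gamma$ fails to be closed. By Definition~\ref{def_limitset}, being non-critical means $\gamma(t)$ does not converge to a single point as $t\to\pm\infty$, so at each end the limit set is a nondegenerate connected set and in particular contains more than two points. Theorem~\ref{thm_noemptyinterior} then forces $\overline{\gamma}$ to have nonempty interior $U$. The region $U$ is saturated by trajectories, and in such a recurrent region almost every trajectory is itself non-closed; hence the non-closed trajectories cover a set of positive measure, contradicting $(i)$. Thus under $(i)$ every non-critical trajectory is closed.

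Next I would establish $(ii)\Rightarrow(iii)$ and $(ii)\Rightarrow(i)$ together. Assuming $(ii)$, no trajectory can be recurrent, since a recurrent trajectory is neither closed nor critical. Consequently every critical trajectory must terminate at a zero at \emph{both} ends: a free end would, by the argument of the previous paragraph, produce a continuum as its limit set and hence, via Theorem~\ref{thm_noemptyinterior}, a recurrent region. Since $q$ is a nonzero holomorphic quadratic differential on the compact surface $\Sigma$, it has only finitely many zeros, each emitting finitely many prongs; therefore there are only finitely many such saddle connections. The critical graph $\MC$ is then a finite graph, which is compact, giving $(iii)$, and is a finite union of arcs, hence of measure zero; since under $(ii)$ the non-closed trajectories are exactly the critical ones, this gives $(i)$.

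Finally I would close the loop with $(iii)\Rightarrow(ii)$. Assume $\MC$ is compact and, for contradiction, that some non-critical trajectory $\gamma$ is non-closed; as above it is recurrent and $R:=\mathrm{int}\,\overline{\gamma}\neq\varnothing$. The boundary of $R$ is built from critical trajectories and zeros, so there is a critical trajectory $\beta$ entering $R$; inside the recurrent region $\beta$ is dense, so $\overline{\beta}=\overline{R}$ is two-dimensional. Then $\MC\supseteq\beta$, while interior points of $R$ lie on non-critical trajectories and hence outside $\MC$; thus $\MC$ fails to contain the limit points of $\beta$ and is not closed, contradicting compactness on $\Sigma$. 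The main obstacle throughout is the soft input behind the trichotomy and behind the step ``one recurrent trajectory $\Rightarrow$ a positive-measure family of non-closed trajectories'': this is precisely where Theorem~\ref{thm_noemptyinterior} (distilled from \cite{strebel84quadratic}, Theorems~11.1--11.2) carries the weight, so the real work lies in invoking it correctly rather than in any computation. Since the statement is Theorem~20.1 of \cite{strebel84quadratic}, one may alternatively record it by citation; the sketch above is the structural argument underlying it.
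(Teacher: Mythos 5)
The paper never proves Proposition~\ref{prop_JenkinsStrebel}: it is recorded purely as a citation to \cite[Theorem 20.1]{strebel84quadratic}, exactly as your closing sentence anticipates, so the only comparison to make is between your sketch and the classical argument it paraphrases. Your skeleton is the right one --- the trichotomy closed/critical/recurrent together with ``recurrent $\Rightarrow$ the closure has nonempty interior'' --- and, despite the typo in your opening line (the announced cycle ``$(i)\Rightarrow(ii)\Rightarrow(iii)\Rightarrow(ii)$'' is not a cycle), the implications actually proved in the body, namely $(i)\Rightarrow(ii)$, $(ii)\Rightarrow(iii)$, $(ii)\Rightarrow(i)$, $(iii)\Rightarrow(ii)$, do close the equivalence.

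However, as a self-contained proof the sketch has two concrete gaps, both at points where you invoke strictly more than Theorem~\ref{thm_noemptyinterior} supplies. First, that theorem only asserts that $\overline{\gamma}$ has nonempty interior $U$; it says nothing about the \emph{other} trajectories meeting $U$. Your step ``in such a recurrent region almost every trajectory is itself non-closed'' (used in $(i)\Rightarrow(ii)$ and again, implicitly, to get the contradiction in $(ii)\Rightarrow(iii)$) therefore needs an argument: the standard one is that a regular closed trajectory lies in an open annulus swept out by closed trajectories, so no closed trajectory can meet $U\subset\overline{\gamma}$ (otherwise $\gamma$, which accumulates on every point of $U$, would enter that annulus and be closed itself); combined with the finiteness of the critical trajectories (finitely many zeros, each with finitely many prongs) this shows $U$ is covered up to measure zero by non-closed non-critical trajectories. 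Without this fact the positive-measure conclusion is unsupported. Second, in $(iii)\Rightarrow(ii)$ the assertions that $\partial R$ consists of critical trajectories and zeros and that some critical trajectory enters $R$ and is \emph{dense} there are Strebel's Theorems 11.1--11.2 in full strength, not Theorem~\ref{thm_noemptyinterior}; moreover your sentence silently assumes $\partial R\neq\varnothing$, which fails in the minimal case $\overline{\gamma}=\Sigma$. That case needs a separate line: $q$ has zeros because $\deg \mathcal{K}_{\Sigma}^{\otimes 2}=4g-4>0$, their separatrices are critical trajectories, they are dense by minimality, and $\MC$ has measure zero, so $\MC$ is not closed and hence not compact. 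All of these holes are fillable from \cite{strebel84quadratic}, but as written they are asserted rather than proved --- which is presumably why the paper, like your final remark, records the proposition by citation rather than reproving it.
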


The set $\Sigma\setminus \MC$ consists of a collection of cylinders ${C_k}$ swept out by closed trajectories. We call ${C_k}$ the \emph{characteristic cylinders} of the Jenkins–Strebel differential $q$, and the isotopy class of the closed trajectories in a characteristic cylinder $C_k$ the \emph{core curve} of $C_k$. The supremum of the transverse distances of two closed trajectories in $C_k$ is called the \emph{height} of $C_k$. See Figure \ref{fig_JSgenus2} for an example of a Jenkins-Strebel differential.

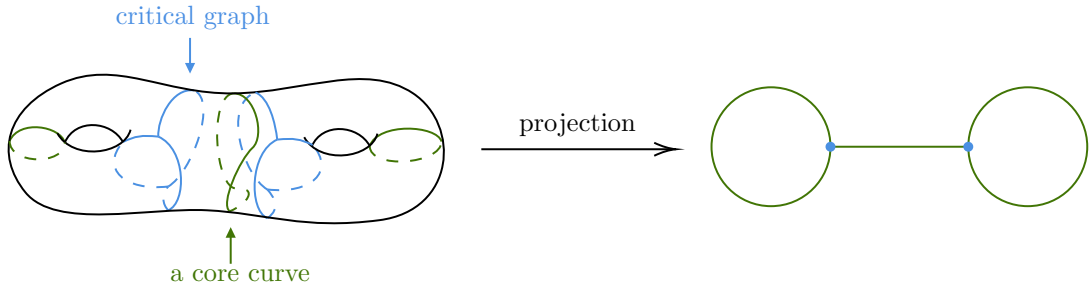
\begin{figure}[!h]
    \centering

\tikzset{every picture/.style={line width=0.75pt}} 

\begin{tikzpicture}[x=0.75pt,y=0.75pt,yscale=-1,xscale=1]

\draw [color={rgb, 255:red, 65; green, 117; blue, 5 }  ,draw opacity=1 ] [dash pattern={on 4.5pt off 4.5pt}]  (232.13,139.21) .. controls (232.13,151.21) and (268,148.67) .. (268.34,137) ;
\draw [color={rgb, 255:red, 65; green, 117; blue, 5 }  ,draw opacity=1 ]   (232.13,139.21) .. controls (236.5,127.17) and (266,128.67) .. (268.34,137) ;
\draw [color={rgb, 255:red, 74; green, 144; blue, 226 }  ,draw opacity=1 ] [dash pattern={on 4.5pt off 4.5pt}]  (180.47,161.17) .. controls (168.47,154.17) and (157.5,118.67) .. (174.5,112.17) ;
\draw [color={rgb, 255:red, 65; green, 117; blue, 5 }  ,draw opacity=1 ] [dash pattern={on 4.5pt off 4.5pt}]  (163.5,112.67) .. controls (150.5,113.67) and (154,161.17) .. (165,160.67) .. controls (176,160.17) and (170.5,168.67) .. (161.5,172.17) ;
\draw [color={rgb, 255:red, 65; green, 117; blue, 5 }  ,draw opacity=1 ]   (163.5,112.67) .. controls (174.5,112.67) and (179.5,134.17) .. (173,140.17) .. controls (166.5,146.17) and (156.5,168.67) .. (161.5,172.17) ;
\draw [color={rgb, 255:red, 65; green, 117; blue, 5 }  ,draw opacity=1 ]   (50.5,136.67) .. controls (55.5,125.67) and (76,129.17) .. (78.34,137.5) ;
\draw [color={rgb, 255:red, 74; green, 144; blue, 226 }  ,draw opacity=1 ]   (185.47,136) .. controls (171.47,142.17) and (171,171.67) .. (180.5,175.67) ;
\draw [color={rgb, 255:red, 74; green, 144; blue, 226 }  ,draw opacity=1 ] [dash pattern={on 4.5pt off 4.5pt}]  (180.5,175.67) .. controls (187.5,170.67) and (183.97,165.17) .. (180.47,161.17) ;
\draw [color={rgb, 255:red, 74; green, 144; blue, 226 }  ,draw opacity=1 ] [dash pattern={on 4.5pt off 4.5pt}]  (180.47,161.17) .. controls (202.47,163.67) and (214.97,152.17) .. (203.33,138.71) ;
\draw [color={rgb, 255:red, 74; green, 144; blue, 226 }  ,draw opacity=1 ]   (143.5,111.17) .. controls (132.5,109.67) and (125,125.67) .. (125.5,134.5) ;
\draw [color={rgb, 255:red, 74; green, 144; blue, 226 }  ,draw opacity=1 ]   (125.5,134.5) .. controls (139.5,140.67) and (140,167.67) .. (130.5,171.67) ;
\draw [color={rgb, 255:red, 74; green, 144; blue, 226 }  ,draw opacity=1 ]   (125.5,134.5) .. controls (117,133.67) and (114,134.17) .. (107.63,137.21) ;
\draw [color={rgb, 255:red, 74; green, 144; blue, 226 }  ,draw opacity=1 ] [dash pattern={on 4.5pt off 4.5pt}]  (130.5,159.67) .. controls (142.5,152.67) and (155.5,116.17) .. (143.5,111.17) ;
\draw [color={rgb, 255:red, 74; green, 144; blue, 226 }  ,draw opacity=1 ] [dash pattern={on 4.5pt off 4.5pt}]  (130.5,171.67) .. controls (127.5,168.67) and (127,163.67) .. (130.5,159.67) ;
\draw [color={rgb, 255:red, 74; green, 144; blue, 226 }  ,draw opacity=1 ] [dash pattern={on 4.5pt off 4.5pt}]  (130.5,159.67) .. controls (108.5,162.17) and (96,150.67) .. (107.63,137.21) ;
\draw   (163.5,112.67) .. controls (199,112.17) and (230,96.67) .. (254,112.67) .. controls (278,128.67) and (275.5,171.67) .. (237,176.67) .. controls (198.5,181.67) and (180,173.17) .. (150,171.67) .. controls (120,170.17) and (74,181.67) .. (58.5,162.67) .. controls (43,143.67) and (48.5,120.17) .. (76.5,107.67) .. controls (104.5,95.17) and (128,113.17) .. (163.5,112.67) -- cycle ;
\draw    (74.58,132.78) .. controls (82.31,146.8) and (108.22,143.72) .. (111.5,131) ;
\draw    (78.34,137.5) .. controls (83.7,124.99) and (101.83,126.57) .. (107.63,137.21) ;
\draw    (199.08,134.78) .. controls (206.81,148.8) and (232.72,145.72) .. (236,133) ;
\draw    (202.84,139.5) .. controls (208.2,126.99) and (226.33,128.57) .. (232.13,139.21) ;
\draw [color={rgb, 255:red, 74; green, 144; blue, 226 }  ,draw opacity=1 ]   (174.5,112.17) .. controls (181.5,115.17) and (184.5,127.67) .. (185.47,136) ;
\draw [color={rgb, 255:red, 74; green, 144; blue, 226 }  ,draw opacity=1 ]   (185.47,136) .. controls (193.97,135.17) and (196.97,135.67) .. (203.33,138.71) ;
\draw [color={rgb, 255:red, 65; green, 117; blue, 5 }  ,draw opacity=1 ] [dash pattern={on 4.5pt off 4.5pt}]  (50.5,136.67) .. controls (50.5,148.67) and (78,149.17) .. (78.34,137.5) ;
\draw [color={rgb, 255:red, 65; green, 117; blue, 5 }  ,draw opacity=1 ]   (162,198.67) -- (162,183.17) ;
\draw [shift={(162,180.17)}, rotate = 90] [fill={rgb, 255:red, 65; green, 117; blue, 5 }  ,fill opacity=1 ][line width=0.08]  [draw opacity=0] (5.36,-2.57) -- (0,0) -- (5.36,2.57) -- cycle    ;
\draw [color={rgb, 255:red, 74; green, 144; blue, 226 }  ,draw opacity=1 ]   (141.5,84.67) -- (141.5,99.67) ;
\draw [shift={(141.5,102.67)}, rotate = 270] [fill={rgb, 255:red, 74; green, 144; blue, 226 }  ,fill opacity=1 ][line width=0.08]  [draw opacity=0] (5.36,-2.57) -- (0,0) -- (5.36,2.57) -- cycle    ;
\draw  [color={rgb, 255:red, 65; green, 117; blue, 5 }  ,draw opacity=1 ] (404.5,139.6) .. controls (404.5,123.03) and (417.93,109.6) .. (434.5,109.6) .. controls (451.07,109.6) and (464.5,123.03) .. (464.5,139.6) .. controls (464.5,156.17) and (451.07,169.6) .. (434.5,169.6) .. controls (417.93,169.6) and (404.5,156.17) .. (404.5,139.6) -- cycle ;
\draw  [color={rgb, 255:red, 65; green, 117; blue, 5 }  ,draw opacity=1 ] (534,139.6) .. controls (534,123.03) and (547.43,109.6) .. (564,109.6) .. controls (580.57,109.6) and (594,123.03) .. (594,139.6) .. controls (594,156.17) and (580.57,169.6) .. (564,169.6) .. controls (547.43,169.6) and (534,156.17) .. (534,139.6) -- cycle ;
\draw [color={rgb, 255:red, 65; green, 117; blue, 5 }  ,draw opacity=1 ]   (464.5,139.6) -- (534,139.6) ;
\draw  [color={rgb, 255:red, 74; green, 144; blue, 226 }  ,draw opacity=1 ][fill={rgb, 255:red, 74; green, 144; blue, 226 }  ,fill opacity=1 ] (462.5,139.6) .. controls (462.5,138.5) and (463.4,137.6) .. (464.5,137.6) .. controls (465.6,137.6) and (466.5,138.5) .. (466.5,139.6) .. controls (466.5,140.7) and (465.6,141.6) .. (464.5,141.6) .. controls (463.4,141.6) and (462.5,140.7) .. (462.5,139.6) -- cycle ;
\draw  [color={rgb, 255:red, 74; green, 144; blue, 226 }  ,draw opacity=1 ][fill={rgb, 255:red, 74; green, 144; blue, 226 }  ,fill opacity=1 ] (532,139.6) .. controls (532,138.5) and (532.9,137.6) .. (534,137.6) .. controls (535.1,137.6) and (536,138.5) .. (536,139.6) .. controls (536,140.7) and (535.1,141.6) .. (534,141.6) .. controls (532.9,141.6) and (532,140.7) .. (532,139.6) -- cycle ;
\draw    (288.4,140.87) -- (384.53,140.87) ;
\draw [shift={(386.53,140.87)}, rotate = 180] [color={rgb, 255:red, 0; green, 0; blue, 0 }  ][line width=0.75]    (10.93,-3.29) .. controls (6.95,-1.4) and (3.31,-0.3) .. (0,0) .. controls (3.31,0.3) and (6.95,1.4) .. (10.93,3.29)   ;

\draw (130,200) node [anchor=north west][inner sep=0.75pt]   [align=left] {\textcolor[rgb]{0.25,0.46,0.02}{{\small a core curve}}};
\draw (102.5,67) node [anchor=north west][inner sep=0.75pt]   [align=left] {\textcolor[rgb]{0.29,0.56,0.89}{{\small critical graph}}};
\draw (306,121.27) node [anchor=north west][inner sep=0.75pt]  [font=\small] [align=left] {projection};

\end{tikzpicture}

    \caption{A Jenkins-Strebel differential on a genus-$2$ surface and its leaf space}
    \label{fig_JSgenus2}
\end{figure}

\subsection{Hopf differentials associated with $\ZZ$-trees}

\begin{lemma}\label{lem_hausdorff}
    Suppose $(T,d)$ is a $\ZZ$-tree whose vertex set is nonempty. If $\Gamma$ is a group acting on $T$ isometrically, then the quotient space $T/\Gamma$ is a Hausdorff space.
\end{lemma}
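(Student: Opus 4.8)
The plan is to prove the statement via the standard criterion that a quotient of a metric space by an isometric group action is Hausdorff once all orbits are closed. First I would record that the quotient map $\pi\colon T\to T/\Gamma$ is open (as always for group actions, since $\pi^{-1}(\pi(U))=\bigcup_{\gamma}\gamma U$). Then, given two points with $\pi(x)\neq\pi(y)$, i.e.\ $x\notin\Gamma y$, if the orbit $\Gamma y$ is closed we have $2\eta:=\dist(x,\Gamma y)=\inf_{\gamma}d(x,\gamma y)>0$, and the open sets $\pi(B(x,\eta))$, $\pi(B(y,\eta))$ separate $\pi(x),\pi(y)$: any common image point would produce $p'\in B(x,\eta)$ and $\alpha$ with $\alpha p'\in B(y,\eta)$, whence $d(x,\alpha^{-1}y)<2\eta$, contradicting the definition of $\eta$. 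Thus the whole lemma reduces to showing that \emph{every orbit $\Gamma y$ is closed}, i.e.\ if $z_n=\gamma_n y\to x$ then $x\in\Gamma y$.

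The key is to exploit the simplicial structure through the set $W\subseteq T$ of \emph{branch points} (points of valence $\ge 3$). Since $\Gamma$ acts by isometries, hence homeomorphisms, $W$ is $\Gamma$-invariant; and because $T$ is a $\ZZ$-tree, branch points are simplicial vertices, so $W$ is $1$-separated, hence closed and discrete. I would then prove the crucial rigidity statement: \emph{every hyperbolic isometry has translation length $\ge 1$}. Indeed, for hyperbolic $\gamma$ (Lemma~\ref{lem_ellipticorhyperbolic}) fix a branch point $b$ and let $b'$ be its nearest-point projection onto $\mathrm{Axis}(\gamma)$; then $b'$ has valence $\ge 3$ (two axial directions plus the direction toward $b$), so $b'\in W$, and $\gamma b'\in W$ lies on the axis at distance $\ell_T(\gamma)$ from $b'$. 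As distinct branch points are at integer distance, $\ell_T(\gamma)\in\ZZ_{>0}$. Consequently, \textbf{any isometry moving some point by less than $1$ must be elliptic}. I would also note the invariant $1$-Lipschitz function $\phi:=\dist(\cdot,W)$, which satisfies $\phi(\gamma p)=\phi(p)$.

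To prove closedness, first dispose of the degenerate case $W=\varnothing$: a $\ZZ$-tree with no branch points and nonempty vertex set must have an endpoint (the bi-infinite line, the unique $\ZZ$-tree with \emph{no} vertex points, being excluded precisely by the hypothesis), so $T$ is a point, a segment, or a ray, and $\mathrm{Isom}(T)$ is finite; all orbits are then finite, hence closed. Now assume $W\neq\varnothing$ and $z_n\to x$. By invariance and continuity of $\phi$, $c:=\phi(y)=\phi(x)$. If $c=0$ then $x\in\overline W=W$ and every $z_n\in W$; since $W$ is discrete, $z_n$ is eventually equal to $x$, giving $x\in\Gamma y$.

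The remaining case $c>0$ is where I expect the real difficulty, since elliptic elements can move $y$ by arbitrarily small positive amounts, so a naive displacement bound fails. Here I would argue geometrically. Choose $r<\tfrac12\min(c,1)$; then $B(x,r)$ contains no branch point, so it lies inside a maximal branchless geodesic arc $L$, one endpoint of which is the nearest branch point (this exists because $x$, not being a branch point, has at most two directions). For large $n$ all $z_n$ lie on $L$. Given two distinct $z_n,z_m\in B(x,r)$, we have $d(z_n,z_m)<1$, so the element $\gamma$ with $\gamma z_n=z_m$ is elliptic; a short tree argument identifies the midpoint $\mu$ of $[z_n,z_m]$ with the projection of $z_n$ onto $\mathrm{Fix}(\gamma)$, so $\gamma$ fixes $\mu$ and acts on $L$ as the reflection about $\mu$. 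This reflection sends the branch-point endpoint of $L$ to the opposite endpoint, forcing both endpoints of $L$ to be branch points and pinning $\mu$ to the exact \emph{center} of $L$. Since this center does not depend on $(n,m)$, all pairwise midpoints of the $z_n$ coincide --- impossible for three distinct collinear points. Hence at most two orbit points lie in $B(x,r)$, so $z_n$ is eventually constant and equals $x$, giving $x\in\Gamma y$. This establishes closedness of all orbits and completes the proof; the reflection-pinned-to-the-center rigidity is exactly what the branch-point hypothesis buys us and is the crux of the argument.
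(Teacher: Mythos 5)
Your proof is correct, and it takes a genuinely different route from the paper's. The paper reduces Hausdorffness to showing each orbit is discrete and handles the key case of an edge point $x$ bare-handedly: it takes the geodesic from $x$ to the nearest vertex $z$, notes that when $g_m.x$ and $g_n.x$ are very close the two translated geodesics must overlap in a common interval (after fixing orientations, and using that their interiors contain no vertices), and concludes $d(g_m.z,g_n.z)=d(g_m.x,g_n.x)\to 0$, contradicting discreteness of the vertex set. You instead route the argument through the elliptic/hyperbolic classification: projecting a branch point onto an axis shows hyperbolic elements have translation length $\geq 1$, so any element moving a point by less than $1$ is elliptic, and the midpoint/fixed-set projection fact then pins all pairwise midpoints of nearby orbit points to the center of the maximal branchless arc, giving at most two orbit points per small ball. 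Both arguments extract rigidity from the same two facts --- isometries preserve valence, and vertices are uniformly separated --- but your decomposition is different and buys more: the integrality of hyperbolic translation lengths and the two-points-per-ball bound are quantitative byproducts the paper does not record, and your reduction step (all orbits closed $\Rightarrow$ quotient Hausdorff, via openness of the quotient map) is fully justified, whereas the paper only asserts that discreteness of orbits suffices (which is indeed enough for isometric actions, since a non-closed orbit must accumulate on itself, but that implication is left implicit there). Conversely, the paper's argument is shorter and needs no analogue of your extra case $W=\varnothing$, which arises only because you work with branch points rather than with all vertex points (valence $\neq 2$, including endpoints) as the paper does. One caveat common to both proofs: the assertion that vertices are $1$-separated is really an implicit normalization of the $\ZZ$-tree (unit, or at least uniformly bounded below, edge lengths); both your argument and the paper's need the vertex set to be uniformly discrete, which holds in all of the paper's applications.
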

\begin{proof}
    It suffices to verify that for any point $x\in T$, the $\Gamma$-orbit $[x]=\Gamma.x=\{g.x:g\in\Gamma\}$ is discrete. 

    Let $x$ be a vertex point of $T$, then $[x]$ is a set of vertices, since the $\Gamma$-action is isometric. And by assumption on $T$, $[x]$ must be a discrete subset of $T$.

    Let $x$ be an edge point of $T$. Suppose, for contradiction, there exists a sequence $\{g_i\}$ in $\Gamma$, such that $g_i.x\ne x$ converges to $x$.
    
    Since the vertex set of $T$ is a nonempty closed subset, there exists a vertex $z$ closest to $x$. Let $\gamma:[0,d(x,z)]\to T$ be the unique geodesic segment between $x$ and $z$, with $\gamma(0)=x$ and $\gamma(d(x,z))=z$. Then by definition of $z$, there are no other vertices of $T$ on $\gamma$. And $\forall g\in\Gamma$, $g.\gamma$ is the geodesic segment from the edge point $g.x$ to the vertex $g.z$.

    We can choose a sufficiently small $\epsilon>0$, and fix an orientation on $B_{\epsilon}(x)$ such that the metric ball is identified with the open interval $(-\epsilon,\epsilon)$. Since $g_i.x \to x$, for sufficiently large $i$, we have $g_i.x \in B_\epsilon(x)$, and the geodesic $g_i.\gamma$ intersects $B_\epsilon(x)$ nontrivially. The orientation of $g_i.\gamma$ may agree or disagree with the orientation of $B_\epsilon(x)$. By passing to subsequence and changing the orientation of $B_{\epsilon}(x)$, we may always assume the orientations of $g_i.\gamma$ and $B_{\epsilon}(x)$ are always coincide on their intersections.
     
    Note that for any $g_i$, there are no vertices on the interior of $g_i.\gamma$. Therefore, for $m$ and $n$ sufficiently large, $g_m.\gamma\cup g_n.\gamma$ is isometric with a closed interval.

\begin{figure}[!h]
\centering

\tikzset{every picture/.style={line width=0.75pt}} 

\begin{tikzpicture}[x=0.75pt,y=0.75pt,yscale=-1,xscale=1]

\draw    (360,90.4) -- (582.67,90.07) ;
\draw  [color={rgb, 255:red, 208; green, 2; blue, 27 }  ,draw opacity=1 ][fill={rgb, 255:red, 208; green, 2; blue, 27 }  ,fill opacity=1 ] (444.2,89.93) .. controls (444.2,88.18) and (442.78,86.77) .. (441.03,86.77) .. controls (439.28,86.77) and (437.87,88.18) .. (437.87,89.93) .. controls (437.87,91.68) and (439.28,93.1) .. (441.03,93.1) .. controls (442.78,93.1) and (444.2,91.68) .. (444.2,89.93) -- cycle ; \draw  [color={rgb, 255:red, 208; green, 2; blue, 27 }  ,draw opacity=1 ] (443.27,87.69) -- (438.79,92.17) ; \draw  [color={rgb, 255:red, 208; green, 2; blue, 27 }  ,draw opacity=1 ] (438.79,87.69) -- (443.27,92.17) ;
\draw  [fill={rgb, 255:red, 0; green, 0; blue, 0 }  ,fill opacity=1 ] (422.8,90.13) .. controls (422.8,88.38) and (421.38,86.97) .. (419.63,86.97) .. controls (417.88,86.97) and (416.47,88.38) .. (416.47,90.13) .. controls (416.47,91.88) and (417.88,93.3) .. (419.63,93.3) .. controls (421.38,93.3) and (422.8,91.88) .. (422.8,90.13) -- cycle ; \draw   (421.87,87.89) -- (417.39,92.37) ; \draw   (417.39,87.89) -- (421.87,92.37) ;
\draw  [color={rgb, 255:red, 74; green, 144; blue, 226 }  ,draw opacity=1 ][fill={rgb, 255:red, 74; green, 144; blue, 226 }  ,fill opacity=1 ] (392.3,90.7) .. controls (392.3,88.95) and (390.88,87.53) .. (389.13,87.53) .. controls (387.38,87.53) and (385.97,88.95) .. (385.97,90.7) .. controls (385.97,92.45) and (387.38,93.87) .. (389.13,93.87) .. controls (390.88,93.87) and (392.3,92.45) .. (392.3,90.7) -- cycle ; \draw  [color={rgb, 255:red, 74; green, 144; blue, 226 }  ,draw opacity=1 ] (391.37,88.46) -- (386.89,92.94) ; \draw  [color={rgb, 255:red, 74; green, 144; blue, 226 }  ,draw opacity=1 ] (386.89,88.46) -- (391.37,92.94) ;
\draw  [color={rgb, 255:red, 74; green, 144; blue, 226 }  ,draw opacity=1 ][fill={rgb, 255:red, 74; green, 144; blue, 226 }  ,fill opacity=1 ] (513.97,90.2) .. controls (513.97,88.45) and (512.55,87.03) .. (510.8,87.03) .. controls (509.05,87.03) and (507.63,88.45) .. (507.63,90.2) .. controls (507.63,91.95) and (509.05,93.37) .. (510.8,93.37) .. controls (512.55,93.37) and (513.97,91.95) .. (513.97,90.2) -- cycle ; \draw  [color={rgb, 255:red, 74; green, 144; blue, 226 }  ,draw opacity=1 ] (513.04,87.96) -- (508.56,92.44) ; \draw  [color={rgb, 255:red, 74; green, 144; blue, 226 }  ,draw opacity=1 ] (508.56,87.96) -- (513.04,92.44) ;
\draw  [color={rgb, 255:red, 208; green, 2; blue, 27 }  ,draw opacity=1 ][fill={rgb, 255:red, 208; green, 2; blue, 27 }  ,fill opacity=1 ] (563.47,89.83) .. controls (563.47,88.08) and (562.05,86.67) .. (560.3,86.67) .. controls (558.55,86.67) and (557.13,88.08) .. (557.13,89.83) .. controls (557.13,91.58) and (558.55,93) .. (560.3,93) .. controls (562.05,93) and (563.47,91.58) .. (563.47,89.83) -- cycle ; \draw  [color={rgb, 255:red, 208; green, 2; blue, 27 }  ,draw opacity=1 ] (562.54,87.59) -- (558.06,92.07) ; \draw  [color={rgb, 255:red, 208; green, 2; blue, 27 }  ,draw opacity=1 ] (558.06,87.59) -- (562.54,92.07) ;
\draw  [draw opacity=0] (373.94,100.16) .. controls (371.89,97.56) and (370.67,94.27) .. (370.67,90.7) .. controls (370.67,87.05) and (371.94,83.7) .. (374.08,81.07) -- (385.97,90.7) -- cycle ; \draw   (373.94,100.16) .. controls (371.89,97.56) and (370.67,94.27) .. (370.67,90.7) .. controls (370.67,87.05) and (371.94,83.7) .. (374.08,81.07) ;  
\draw  [draw opacity=0] (466.79,98.96) .. controls (468.84,96.36) and (470.07,93.07) .. (470.07,89.5) .. controls (470.07,85.85) and (468.79,82.5) .. (466.66,79.87) -- (454.77,89.5) -- cycle ; \draw   (466.79,98.96) .. controls (468.84,96.36) and (470.07,93.07) .. (470.07,89.5) .. controls (470.07,85.85) and (468.79,82.5) .. (466.66,79.87) ;  

\draw (376.8,68.6) node [anchor=north west][inner sep=0.75pt]  [font=\footnotesize]  {$\textcolor[rgb]{0.29,0.56,0.89}{g}\textcolor[rgb]{0.29,0.56,0.89}{_{m}}\textcolor[rgb]{0.29,0.56,0.89}{.x}$};
\draw (431.5,69.8) node [anchor=north west][inner sep=0.75pt]  [font=\footnotesize]  {$\textcolor[rgb]{0.82,0.01,0.11}{g}\textcolor[rgb]{0.82,0.01,0.11}{_{n}}\textcolor[rgb]{0.82,0.01,0.11}{.x}$};
\draw (415.3,94.9) node [anchor=north west][inner sep=0.75pt]  [font=\footnotesize]  {$x$};
\draw (491.9,69.6) node [anchor=north west][inner sep=0.75pt]  [font=\footnotesize]  {$\textcolor[rgb]{0.29,0.56,0.89}{g}\textcolor[rgb]{0.29,0.56,0.89}{_{m}}\textcolor[rgb]{0.29,0.56,0.89}{.z}$};
\draw (545.6,69.8) node [anchor=north west][inner sep=0.75pt]  [font=\footnotesize]  {$\textcolor[rgb]{0.82,0.01,0.11}{g}\textcolor[rgb]{0.82,0.01,0.11}{_{n}}\textcolor[rgb]{0.82,0.01,0.11}{.z}$};
\draw (364.27,107.6) node [anchor=north west][inner sep=0.75pt]  [font=\footnotesize]  {${\textstyle B_{\epsilon }( x)}$};

\end{tikzpicture}

         \caption{Overlaping geodesics}
         \label{overlap}
     \end{figure}
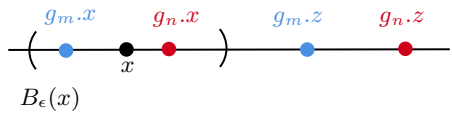

    Now we have $d(g_m.z,g_n.z)=d(g_m.x,g_n.x)$. As $m,n\to\infty$, $d(g_m.x,g_n.x)$ tends to zero, and hence the set of vertices $\{g_i.z\}$ has a limit point. This contradicts the assumption that the vertex set of $T$ is discrete.
\end{proof}

\begin{theorem}\label{thm_HopfdiffisJS}
    Consider a nonconstant $\Gamma$-equivariant harmonic map $u:\tSigma\to T$, where $T$ is a $\ZZ$-tree with nonempty vertex set. Then the Hopf differential $q_u$ is a Jenkins-Strebel differential on $\Sigma$. 
\end{theorem}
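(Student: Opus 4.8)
The plan is to argue by contradiction via Strebel's criterion, the central point being that the simplicial hypothesis on $T$ lets us descend $u$ to the Hausdorff quotient $T/\Gamma$. First I record that $q_u\not\equiv 0$: if $q_u\equiv 0$ the transverse measure $\mu_{q_u}=|\Re\sqrt{q_u}|$ vanishes identically, so the leaf space $T_{q_u}$ is a single point, and then the factorization $u=f\circ\pi$ of Proposition~\ref{prop_harmoncmap_factortree} forces $u$ to be constant, contrary to hypothesis. Now suppose, for contradiction, that $q_u$ is \emph{not} Jenkins--Strebel. By Proposition~\ref{prop_JenkinsStrebel} there is a trajectory $\gamma$ of $q_u$ that is neither closed nor critical. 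Being non-critical, $\gamma$ does not converge to a single zero at either end, so by the trajectory structure theory its limit set is infinite and in particular contains more than two points; Theorem~\ref{thm_noemptyinterior} then shows that $\overline{\gamma}$ has nonempty interior, i.e. $\gamma$ is dense in some open set $U\subseteq\Sigma$. The goal is to derive a contradiction from this density.

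The key device is to push $u$ down to $T/\Gamma$. Since $u$ is $\Gamma$-equivariant and $p:\tSigma\to\Sigma$ is the quotient by $\Gamma$, composing $u$ with the quotient map $T\to T/\Gamma$ yields a continuous map $\bar u:\Sigma\to T/\Gamma$. By Proposition~\ref{prop_harmoncmap_factortree} the map $u$ is constant along the leaves of the lifted vertical foliation (the fibers of $\pi$); lifting a sub-arc of $\gamma$ joining two of its points to $\tSigma$ keeps it inside a single leaf, whence $\bar u$ is constant along each trajectory of $q_u$ on $\Sigma$. In particular $\bar u(\gamma)=\{[w]\}$ for a single point $w\in T$. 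Here the hypothesis enters: by Lemma~\ref{lem_hausdorff} the space $T/\Gamma$ is Hausdorff, so $\{[w]\}$ is closed. Since $\gamma$ is dense in $U$ and $\bar u$ is continuous, $\bar u(U)\subseteq\overline{\{[w]\}}=\{[w]\}$, so $\bar u$ is constant on the open set $U$.

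It remains to convert constancy of $\bar u$ on $U$ into vanishing of $q_u$. On the open set $p^{-1}(U)\subseteq\tSigma$ the map $u$ takes values in the single fiber over $[w]$, namely the orbit $\Gamma\cdot w$. The proof of Lemma~\ref{lem_hausdorff} shows that every $\Gamma$-orbit in the $\ZZ$-tree $T$ is discrete; as $u$ is continuous and $p^{-1}(U)$ is open, $u$ must therefore be locally constant on $p^{-1}(U)$. Hence $\partial u\equiv 0$ there, so $q_u=4\,\partial u\otimes\partial u$ vanishes on $U$. Since $q_u$ is a holomorphic quadratic differential on the connected surface $\Sigma$, it vanishes identically, contradicting $q_u\not\equiv 0$. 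This contradiction shows that $q_u$ is Jenkins--Strebel.

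The step I expect to be the main obstacle is precisely the transfer between the trajectory on $\Sigma$ and the map $u$, which lives only on $\tSigma$: a single leaf downstairs lifts to infinitely many leaves upstairs carrying distinct $\Gamma$-translated values, so one cannot directly assert that $u$ is constant on a lift of $U$. Descending to $T/\Gamma$ is exactly what circumvents this, and the simplicial hypothesis (a $\ZZ$-tree with nonempty vertex set) is used twice through Lemma~\ref{lem_hausdorff}: once via the Hausdorff property, to know that the orbit value is a closed point and hence that $\bar u$ is constant on $U$, and once via discreteness of orbits, to upgrade ``$u$ lands in one orbit'' to ``$u$ is locally constant'' and thereby kill $q_u$. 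The only other point requiring care is the input from classical trajectory structure theory ensuring that a non-closed, non-critical trajectory has a limit set large enough to invoke Theorem~\ref{thm_noemptyinterior}.
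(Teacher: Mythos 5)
Your proposal is correct and takes essentially the same approach as the paper's proof: descend $u$ to the Hausdorff quotient $T/\Gamma$ via Lemma~\ref{lem_hausdorff}, use constancy of $u$ along leaves together with Theorem~\ref{thm_noemptyinterior} to force $\bar u$ to be constant on an open disk where a non-closed, non-critical trajectory is dense, and conclude that $q_u$ would vanish on an open set, contradicting holomorphy. Your two refinements---the explicit verification that $q_u\not\equiv 0$ for a nonconstant harmonic map, and the use of orbit discreteness to upgrade ``$u$ lands in one orbit'' to local constancy on $p^{-1}(U)$---are points the paper leaves implicit, and they make the argument tighter.
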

\begin{proof}
    We shown in Lemma \ref{lem_hausdorff} that $\mathcal{G}:=T/\Gamma$ is a Hausdorff space. The $\Gamma$-equivariant map $u$ reduce to a continuous map $\underline{u}:\Sigma \to T/\Gamma$.

    Note that when restricted to a trajectory of $p^\ast q_u$, $u$ is constant. Thus for any trajectory $\beta$ of $q_u$ on $\Sigma$, $\underline{u}|_{\beta}$ is constant. By Theorem \ref{thm_noemptyinterior}, if $\beta$ is non-critical and non-closed, then there is a small disk $D$ on $\Sigma$, such that $\beta\cap D$ is dense in $D$.

    Since $T/\Gamma$ is Hausdorff, $\underline{u}|_{\beta\cap D}$ extends to $D$ as a constant map. It follows that on the preimage $p^{-1}(D)$, the map $u:\tSigma\to T$ is constant. Thus $p^\ast q_u$ vanishes on $p^{-1}(D)$. However, since $q_u$ is holomorphic and nonzero, this is impossible. Therefore, any non-critical trajectory of $q_u$ is closed, and hence by Proposition \ref{prop_JenkinsStrebel}, $q_u$ is Jenkins-Strebel.
\end{proof}

Theorem \ref{thm:imageJS} is a corollary of Theorem \ref{thm_HopfdiffisJS}, since $T_F$ is always a $\ZZ$-tree with nonempty vertex set.

\begin{remark}
   The fact that a trajectory dense in somewhere forces a harmonic map to be constant was previously used in \cite{Wolf_JSdiffs}.
\end{remark}

\subsection{Leaf space of a Jenkins-Strebel differential}
In this subsection, we discuss the leaf space structure of a Jenkins-Strebel differential. As an application, we show that the leaf space $T_q$ on $\tSigma$ has infinite valence.  We will also show that a quadratic differential $q$ is Jenkins-Strebel if and only if $T_q$ is a $\ZZ$-tree. 

Consider a nonzero Jenkins-Strebel differential $q$ on $\Sigma$. Denote the critical graph of $q$ by $\MC$. Let $\widetilde{\MC}:=p^{-1}(\MC)$, where $p:\tSigma\to \Sigma$ is the universal covering map. We write $\MC=\MC_1\sqcup\cdots\sqcup \MC_m$, where each $\MC_i$ is a connected component of $\MC$. 
Let  
\[
H_i:=\mathrm{Im}((\iota_i)_*:\pi_1(\MC_i)\to \Gamma)
\]  
be the image of the map induced by the inclusion $\iota_i:\MC_i\hookrightarrow\Sigma$. Then  
\[
\sharp\widetilde{\MC}_i=[\pi_1(\MC_i):H_i].
\]  

\begin{lemma}
    The inclusion $(\iota_i)_*:\pi_1(\MC_i)\to \Gamma$ is injective for any $1\le i\le m$.
\end{lemma}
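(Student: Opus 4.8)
The plan is to reinterpret injectivity of $(\iota_i)_*$ as a statement about the preimage component $\widetilde{\MC}_i$ and then exploit the cylinder structure of a Jenkins--Strebel differential. By the standard theory of covering spaces, the restriction $p\colon\widetilde{\MC}_i\to\MC_i$ is itself a covering map, corresponding to the subgroup $\ker(\iota_i)_*\le\pi_1(\MC_i)$; indeed $p_*\pi_1(\widetilde{\MC}_i)=\ker(\iota_i)_*$, since a loop in $\MC_i$ lifts to a loop in $\widetilde{\MC}_i\subset\tSigma$ exactly when it is null-homotopic in $\Sigma$. Hence $(\iota_i)_*$ is injective if and only if $\pi_1(\widetilde{\MC}_i)=1$, i.e. if and only if the connected graph $\widetilde{\MC}_i$ is a tree, equivalently contains no embedded cycle. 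So the whole lemma reduces to showing that $\widetilde{\MC}_i$ has no embedded cycle.

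First I would argue by contradiction: suppose $\widetilde{\MC}_i$ contains an embedded cycle $\tilde\gamma$. Since $\Sigma$ has genus at least two, its universal cover $\tSigma$ is homeomorphic to $\RR^2$, so by the Jordan--Schoenflies theorem $\tilde\gamma$ bounds a closed topological disk $\Omega\subset\tSigma$. As $\widetilde{\MC}:=p^{-1}(\MC)$ is a $1$-dimensional subcomplex it has empty interior, so $\mathrm{int}\,\Omega$ contains a point $x\notin\widetilde{\MC}$. Then $x$ lies in $\tSigma\setminus\widetilde{\MC}=p^{-1}(\Sigma\setminus\MC)$, which is the disjoint union of the lifts of the characteristic cylinders $C_k$; let $\widetilde{C}$ be the connected component (a lift of some $C_k$) containing $x$. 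Note $\widetilde{C}$ is disjoint from $\widetilde{\MC}$, since $C_k\cap\MC=\varnothing$.

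The geometric input I would use is that the core curve of each characteristic cylinder $C_k$ is \emph{essential} in $\Sigma$. This holds because closed trajectories are geodesics of the flat metric $|q|$, and a null-homotopic simple closed geodesic would bound a disk with geodesic boundary and flat interior whose only singularities are cone points of $|q|$ (all of cone angle $\ge 3\pi$, hence contributing nonpositive curvature); Gauss--Bonnet would then give $2\pi=\int_{\partial}\kappa_g+\sum_j(2\pi-\theta_j)\le 0$, a contradiction. Consequently $\pi_1(C_k)\cong\ZZ$ injects into the torsion-free group $\Gamma$, the component $\widetilde{C}$ is the \emph{universal} cover of the annulus $C_k$, and the deck generator $g_k$ (the core curve) acts on $\widetilde{C}$ with an infinite, discrete orbit by proper cocompactness of the $\Gamma$-action on $\tSigma$; in particular $\widetilde{C}$ is unbounded.

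Finally I would derive the contradiction: $\widetilde{C}$ is connected and meets $\mathrm{int}\,\Omega$ (it contains $x$), but being unbounded it cannot lie inside the compact disk $\Omega$; hence $\widetilde{C}$ must meet $\partial\Omega=\tilde\gamma$. Yet $\tilde\gamma\subset\widetilde{\MC}$ while $\widetilde{C}\cap\widetilde{\MC}=\varnothing$, which is impossible. Therefore $\widetilde{\MC}_i$ contains no embedded cycle, so it is a tree and $(\iota_i)_*$ is injective. The step I expect to be the main obstacle is the essentiality of the core curves: everything else is soft planar topology together with covering-space bookkeeping, but this is the one place where the holomorphicity of $q$ genuinely enters, through the flat geometry of $|q|$ and Gauss--Bonnet, and some care is needed to treat the cone points and to rule out a contractible core curve cleanly.
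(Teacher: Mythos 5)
Your proof is correct, but the contradiction is derived by a genuinely different mechanism than in the paper. Both arguments make the same initial reduction---injectivity of $(\iota_i)_*$ is equivalent to each connected component of $p^{-1}(\MC_i)$ being simply connected, hence a tree---and both then take a simple loop $\tilde\gamma$ in such a component together with the disk it bounds in $\tSigma\approx\RR^2$. At that point the paper stays entirely inside foliation theory: the boundary of the disk lies in (singular) leaves of the lifted vertical foliation, every singularity of the foliation of a holomorphic quadratic differential has negative index, and the Euler--Poincar\'e (Poincar\'e--Hopf) formula for a disk with boundary tangent to the foliation forces the index sum to equal $\chi(D)=1>0$, an immediate contradiction. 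You instead invoke the Jenkins--Strebel cylinder decomposition: an interior point of the disk off $\widetilde{\MC}$ lies in a lift $\widetilde{C}$ of a characteristic cylinder; the core curve is essential (your Gauss--Bonnet argument with cone angles $\ge 3\pi$ is a correct proof of this standard fact), so $\widetilde{C}$ is the universal cover of the annulus and, by proper discontinuity of the deck action, cannot be contained in the compact disk $\Omega$; connectedness then forces $\widetilde{C}$ to meet $\partial\Omega\subset\widetilde{\MC}$, which is impossible. Your route is heavier: it uses the cylinder structure (so the Jenkins--Strebel hypothesis enters in a stronger way), the essentiality of core curves, and the properness of the $\Gamma$-action, whereas the paper's index count is a short local argument needing only that zeros of $q$ have negative foliation index. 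What your version buys is the avoidance of the Euler--Poincar\'e formula for foliations on surfaces with boundary, replaced by Gauss--Bonnet for the flat metric $|q|$ plus soft covering-space topology. One bookkeeping slip to fix: $\widetilde{\MC}_i=p^{-1}(\MC_i)$ is in general disconnected, so ``$\pi_1(\widetilde{\MC}_i)=1$'' must be read component by component (as the paper does with $\widetilde{\MC}_{ij}$); this is harmless for your argument, since your cycle $\tilde\gamma$ lies in a single component and the lifting criterion identifies $p_*\pi_1$ of that component with $\ker(\iota_i)_*$.
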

\begin{proof}
    It suffices to show that a connected component of $\widetilde{\MC}_i=:p^{-1}(\MC_i)$, say $\widetilde{\MC}_{ij}$, is a simplicial tree of finite valence. In this case, $p|_{\widetilde{\MC}_{ij}}$ is the universal covering of $\MC_i$, and hence $H_i\cong \pi_1(\MC_i)$.

    Suppose, for contradiction, that $\widetilde{\MC}_{ij}$ is not simply connected, then there is a simple loop $\gamma$ contained in $\widetilde{\MC}_{ij}$. Since $\tSigma\approx \mathbb{H}^2$, $\gamma$ bounds a disk $D$ on $\tSigma$. Then the singular foliation $\tilde{\F}_q$ on $\tSigma$, restricts to a singular foliation $\mathcal{F}_D:=\mathcal{F}_q|_D$ on $D$. However, the boundary $\partial D$ lies in a leaf of $\mathcal{F}_D$, while $\mathcal{F}_D$ has no critical point of nonnegative index. This contradicts with the Euler-Poincar\'e formula for a disk.
\end{proof}

As each component of the graph contains the zeros of $q$, $\MC_i$ contains a vertex and each vertex of $\MC_i$ has valence at least three, hence $\pi_1(\MC_i)$ must be a free group of rank at least two. Consequently, $H_i$ is a nonabelian free subgroup of $\Gamma$.  
As every finite-index subgroup of $\Gamma$ is a surface group, $H_i$ cannot be of finite index.  
Thus $\sharp\widetilde{\MC}_i$ and $\sharp\widetilde{\MC}$ are countably infinite.

Recall the construction of the $\RR$-tree $T_q$ associated to a quadratic differential $q$, given after Definition \ref{def:Gammatree}. Now we have:

\begin{proposition}\label{prop_TqisZtree}
Let $q$ be a Jenkins-Strebel differential on $\Sigma$. Then $T_q$ is a $\ZZ$-tree with countably infinitely many vertices, and each vertex has countably infinitely many incident edges.
\end{proposition}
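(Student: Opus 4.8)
The plan is to read the simplicial structure of $T_q$ directly off the cylinder decomposition of $q$. First I would make the leaf-collapsing projection $\pi\colon\tSigma\to T_q$ explicit. Since the transverse measure $\tilde{\mu}_q$ vanishes along vertical arcs, any two points in a single connected component $\widetilde{\MC}_{ij}$ of $\widetilde{\MC}$ can be joined through the zeros by concatenated vertical arcs of zero transverse length; hence each component collapses to one point of $T_q$, and these points will be the \emph{vertices}. By Proposition~\ref{prop_JenkinsStrebel}, $\Sigma\setminus\MC$ is a finite union of characteristic cylinders $C_k$; each lift of a $C_k$ is an infinite strip swept out by lifts of its closed trajectories, and it collapses isometrically onto an open segment of length equal to the height $h_k$ of $C_k$. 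These segments will be the \emph{edges}.

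Granting that $(T_q,d)$ is an $\RR$-tree (the leaf-space construction recalled after Definition~\ref{def:Gammatree}), I would then verify that it is genuinely simplicial. Because there are only finitely many cylinders, the edge lengths are bounded below by $\min_k h_k>0$, so the vertex set cannot accumulate and the decomposition above exhibits $T_q$ as a simplicial $1$-complex; thus $T_q$ is a $\ZZ$-tree. The count of vertices is then immediate: they are in bijection with the connected components of $\widetilde{\MC}$, which was shown above to be countably infinite.

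For the valence statement, I would note that the edges incident to a vertex $v=\pi(\widetilde{\MC}_{ij})$ correspond bijectively to the strips having a boundary line in $\widetilde{\MC}_{ij}$; since $T_q$ is a tree it has no loops, so each such strip joins $v$ to a genuinely distinct neighbour. The upper bound is cheap, as $\Gamma$ is countable and there are finitely many cylinders, whence $\tSigma$ contains at most countably many strips. For the lower bound I would exploit the stabilizer $H_i=\mathrm{Stab}_\Gamma(\widetilde{\MC}_{ij})\cong\pi_1(\MC_i)$, a free group of rank at least two. It permutes the incident strips, and the full $\Gamma$-stabilizer of any one of them is the infinite cyclic subgroup generated by the core-curve class of the corresponding cylinder. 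A cyclic subgroup has infinite index in a free group of rank $\ge 2$, so the $H_i$-orbit of a single incident strip is already infinite, producing countably infinitely many incident edges.

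The step I expect to be the main obstacle is the bookkeeping in this lower bound: I must check that the full $\Gamma$-stabilizer of an incident strip actually lies inside $H_i$---so that its orbit again consists of strips incident to $\widetilde{\MC}_{ij}$---and that this stabilizer is exactly the cyclic group generated by the core curve, after which the infinite-index fact for free groups finishes the argument. The precise behaviour of $\pi$ on the critical graph, namely that an entire component rather than a single critical trajectory collapses to one point, also warrants care and is where the vanishing of $\tilde{\mu}_q$ along vertical arcs is used.
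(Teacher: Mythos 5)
Your overall architecture matches the paper's: the vertices of $T_q$ are the images of the components of $\widetilde{\MC}$, the edges come from the collapsed lifts of the characteristic cylinders, the vertex count comes from the countable infinitude of components of $\widetilde{\MC}$, and infinite valence comes from an orbit--stabilizer argument pitting the rank $\ge 2$ free group $H_i$ against a cyclic stabilizer. That orbit argument is sound, and the worry you flag at the end is easily dispatched: you do not actually need $\mathrm{Stab}_\Gamma(\tilde{C})\subset H_i$, since it suffices that $\mathrm{Stab}_{H_i}(\tilde{C})$ is contained in a cyclic group and hence has infinite index in $H_i$ (and in any case an element stabilizing $\tilde{C}$ stabilizes its boundary line in $\widetilde{\MC}_{ij}$, hence lies in the stabilizer of $\widetilde{\MC}_{ij}$).

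The genuine gap is elsewhere: you assert, rather than prove, that the leaf-space projection \emph{separates} the objects you are counting. Concretely, you take for granted (a) that distinct components of $\widetilde{\MC}$ map to distinct points of $T_q$ (``in bijection \dots immediate''), and (b) that distinct strips incident to $\widetilde{\MC}_{ij}$ map to distinct edges (``correspond bijectively''). Neither is automatic: the metric on $T_q$ is induced by the transverse measure, so a priori two disjoint closed subsets of $\tSigma$ could lie at transverse distance zero and be identified in the quotient, and two distinct incident strips could collapse onto one and the same segment, which would invalidate the valence count. Without (a), even your simpliciality argument (``edge lengths bounded below, so vertices cannot accumulate'') cannot start, because you must first know the alleged vertices are distinct points of $T_q$. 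These separation statements are exactly what occupies the second half of the paper's proof: there one shows that any arc joining two distinct lifts projects to a loop in $\Sigma$ representing an element of $\Gamma\setminus\pi_1(\MC_i)$, which therefore must cross the core curve of some characteristic cylinder, so its transverse length is bounded below by the height of that cylinder; similarly, arcs joining lifts of distinct components $\MC_i\neq\MC_k$ have transverse length bounded below by the positive transverse distance between $\MC_i$ and $\MC_k$, and arcs joining distinct lifts of a boundary trajectory $\beta$ are bounded below by the transverse distance from $\beta$ to $\MC$. To close your proof you would need to supply this homotopy-plus-transverse-length argument (or an equivalent disk/index argument showing that the dual graph of the strip decomposition of $\tSigma$ has no loops and no double edges).
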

\begin{proof}
    Let $\pi:\tSigma\to T_q$ be the projection. We show that the image of $\widetilde{\MC}$ under $\pi$ is exactly the vertex set of $T_q$. Note that any $x\in T_q\setminus \pi(\widetilde{\MC})$ is an edge point, so it suffices to show that each point $x\in \pi(\widetilde{\MC})$ is a vertex of infinite valence.

    Consider a small neighborhood $N(\MC_i)$ of $\MC_i\subset\MC$ on $\Sigma$, whose boundary consists of finitely many closed trajectories of $q$. $N(\MC_i)$ has a deformation retraction onto $\MC_i$. Let $\beta\subset \partial N(\MC_i)$ be such a closed trajectory. $N(\MC_i)$ can be lifted to an open subset $N(\widetilde{\MC}_i)$ of $\tSigma$ with countably infinite connected components, and each its component (denoted by $N(\widetilde{\MC}_{ij})$) is a neighborhood of a component $\widetilde{\MC}_{ij}$ of $\widetilde{\MC}_i$. $\tilde{\beta}=:p^{-1}(\beta)$ is contained in the boundary of $N(\widetilde{\MC}_i)$. Each component of $\tilde{\beta}$ is projected to an edge point of $T_q$. 
    
    We next show that $\pi(\tilde{\beta}\cap\partial N(\widetilde{\MC}_{ij}))$ is a countably infinite set of edge points. 

    First, we show that $\tilde{\beta}\cap\partial N(\widetilde{\MC}_{ij})$ has countably infinite connected components. Recall that $H_i=(\iota_i)_{\ast}\pi_1(\MC_i)\cong\pi_1(\MC_i)$ is a free subgroup of rank at least two. And $\beta$ represents a nontrivial element in $H_i$. 
    
    On the universal covering $\tSigma$, the stabilizer of $\widetilde{\MC}_{ij}$ (as well as $N(\widetilde{\MC}_{ij})$) in the fundamental group $\Gamma$, is conjugate equivalent to $H_i$. And similarly, the stabilizer of a component of $\tilde{\beta}$ is conjugate equivalent to the subgroup $\langle[\beta]\rangle$ with one generator. Suppose $\tilde{\beta}_{ik}$ is a component of $\tilde{\beta}\cap\partial N(\widetilde{\MC}_{ij})$, then from the discussion above, we deduce that its stabilizer is a subgroup of infinite index of the stabilizer of $\widetilde{\MC}_{ij}$, since $\beta$ represents an element of $H_i$. Therefore, under the action of the stabilizer of $\widetilde{\MC}_{ij}$, $\tilde{\beta}_{ik}$ is mapped to infinitely many components in $\tilde{\beta}\cap\partial N(\widetilde{\MC}_{ij})$.

    Next, we verify that each two distinct components $\tilde{\beta}_{ik}$ and $\tilde{\beta}_{il}$ in $\tilde{\beta}\cap\partial N(\widetilde{\MC}_{ij})$ have different image under the projection $\pi:\tSigma\to T_q$. It suffices to show that \[d(\pi(\tilde{\beta}_{ik}),\pi(\tilde{\beta}_{il}))=\inf_{\gamma}\int_{\gamma}\langle\Re\sqrt{p^\ast q},\dot{\gamma}\rangle dt>0,\]where $\gamma$ ranges over all piece-wise $C^1$ arcs connecting $\tilde{\beta}_{ik}$ and $\tilde{\beta}_{il}$. That is, to show that the transverse distance between $\tilde{\beta}_{ik}$ and $\tilde{\beta}_{il}$ is nonzero.

    Given an arc $\gamma$ connecting $\tilde{\beta}_{ik}$ and $\tilde{\beta}_{il}$, consider the push-forward $\underline{\gamma}:=p(\gamma)$ (w.l.o.g. assumed to be a loop). We have \[\int_{\gamma}\langle\Re\sqrt{p^\ast q},\dot{\gamma}\rangle dt=\int_{\underline{\gamma}}\langle\Re\sqrt{q},\dot{\underline{\gamma}}\rangle dt.\]

    It follows that $\underline{\gamma}$ intersects $\MC$ nontrivially. Indeed, if $\underline{\gamma}\cap \MC=\varnothing$, then $\underline{\gamma}$ is contained in a cylinder and hence $[\underline{\gamma}]=n[\beta]$ for some $n\in\ZZ$, which implies $\tilde{\beta}_k=\tilde{\beta}_l$, contradicting to the assumption. However, when $\underline{\gamma}\cap \MC\ne\varnothing$, $\int_{\underline{\gamma}}\langle\Re\sqrt{q},\dot{\underline{\gamma}}\rangle dt$ has a positive lower bound, which is given by the transverse distance between $\beta$ and the critical graph $\MC$. Therefore, each vertex of $T_q$ has infinite valence.

    Finally, we show that each component of $\widetilde{\MC}$ maps to a distinct vertex of $T_q$, and hence $T_q$ has infinitely many vertices. Suppose $\widetilde{\MC}_{ij}$ and $\widetilde{\MC}_{kl}$ are lifts of different components $\MC_i$ and $\MC_k$ of $\MC$, under the covering map $p \colon \tSigma \to \Sigma$. Then the distance between $\pi(\widetilde{\MC}_{ij})$ and $\pi(\widetilde{\MC}_{kl})$ is bounded below by the transverse distance between $\MC_i$ and $\MC_k$.

    If $\widetilde{\MC}_{ij}$ and $\widetilde{\MC}_{il}$ are two distinct components of $p^{-1}(\MC_i)$, then for any arc $\gamma$ connecting them, the projection $\underline{\gamma}$ in $\Sigma$ represents an element in $\Gamma \setminus \pi_1(\MC_i)$. There exists a characteristic cylinder $C_k$ of $q$ such that $\partial C_k \cap \MC_i \ne \varnothing$, and $\underline{\gamma}$ intersects the core curve of $C_k$ nontrivially. Suppose not. Then $\underline{\gamma}$ is homotopic to a loop in $\MC_i$, hence $[\underline{\gamma}] \in \pi_1(\MC_i)$, which is a contradiction. Now if $\underline{\gamma}$ has nonempty intersection with the core curve of $C_k$, then the transverse length of $\underline{\gamma}$ is bounded below by the height of $C_k$. Thus $\pi(\widetilde{\MC}_{ij})$ and $\pi(\widetilde{\MC}_{il})$ are different vertices in $T_q$.
\end{proof}

\begin{proposition}
    A quadratic differential $q$ is Jenkins-Strebel if and only if $T_{q}$ is a $\mathbb{Z}$-tree. 
\end{proposition}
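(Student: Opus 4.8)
The forward implication is already in hand: Proposition \ref{prop_TqisZtree} shows that the leaf space of a Jenkins--Strebel differential is a $\ZZ$-tree. So the plan is to prove the converse by contraposition: if $q$ is \emph{not} Jenkins--Strebel, then $T_q$ fails to be a simplicial $1$-complex. The mechanism I would exploit is that the branch points (points of valence $\ge 3$) of $T_q$ are exactly the images under the projection $\pi:\tSigma\to T_q$ of the singular leaves of $\tilde{\F}_q$, that is, the leaves passing through zeros of $p^{\ast}q$; a non-Jenkins--Strebel differential forces these singular leaves to be dense in some open region, which in turn forces the branch points of $T_q$ to accumulate. Since a $\ZZ$-tree is a simplicial $1$-complex, its vertex set is discrete, so any arc of finite length meets only finitely many vertices; producing a single segment of $T_q$ that carries a dense set of branch points therefore yields the desired contradiction.

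To set this up, suppose $q$ is not Jenkins--Strebel. By Proposition \ref{prop_JenkinsStrebel} there is a non-critical trajectory $\beta$ that is not closed; being non-critical, neither end of $\beta$ converges to a single zero, so its limit set contains more than two points, and Theorem \ref{thm_noemptyinterior} gives that $\overline{\beta}$ has nonempty interior $U$. On $U$ the foliation is recurrent, and here I would invoke the structure theory of minimal components of measured foliations: such a component carries zeros of $q$ whose separatrices are half-leaves lying in a minimal set, hence dense in $U$. Thus the union $S$ of singular leaves of $\F_q$ is dense in $U$.

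The heart of the argument is then a transversal computation, which I would carry out in the universal cover to keep $\pi$ well behaved. Choose a short transversal arc $\tau\subset U$ of positive transverse measure $\delta>0$ and lift it to $\tilde{\tau}\subset\tSigma$. In the simply connected cover each leaf of $\tilde{\F}_q$ is an embedded arc meeting $\tilde{\tau}$ at most once, since a second crossing together with the intervening leaf arc would bound a foliated bigon, which is excluded by the Euler--Poincar\'e argument already used in the lemma preceding Proposition \ref{prop_TqisZtree}. Consequently $\pi|_{\tilde{\tau}}$ is injective and, by the definition of the metric on $T_q$ as transverse distance, maps $\tilde{\tau}$ isometrically onto a segment of length $\delta$ in $T_q$. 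Working in a flow box I would then check that density of $S$ in $U$ forces the crossings $S\cap\tilde{\tau}$ to be dense in $\tilde{\tau}$: a singular leaf passing within $\ep$ of a point of $\tau$ must cross the full cross-section $\tau$ of the local flow box at a point within $\ep$. Each such crossing maps to a branch point of $T_q$, so the segment $\pi(\tilde{\tau})$ contains a dense set of branch points, contradicting the discreteness of vertices in a $\ZZ$-tree. Hence every non-critical trajectory is closed, and by Proposition \ref{prop_JenkinsStrebel} the differential $q$ is Jenkins--Strebel.

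I expect the main obstacle to be the two geometric inputs in the last paragraph. First, one must justify that a non-Jenkins--Strebel differential genuinely produces a region in which singular leaves are dense; this is where the trichotomy of trajectories and the structure of minimal components have to be quoted with care, so that $\overline{\beta}$ really contains separatrices dense in its interior. Second, one must pin down that the projection of a transversal is an isometric embedding onto an honest segment, so that density of leaf crossings transfers to density of branch points. Both are local-to-global statements about measured foliations where passing to the universal cover and ruling out self-returns of leaves across $\tilde{\tau}$ is essential.
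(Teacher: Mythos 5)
Your forward direction is identical to the paper's (both simply quote Proposition~\ref{prop_TqisZtree}), but your converse takes a genuinely different route. The paper does not argue contrapositively: it notes that the leaf-space projection $\pi:\tSigma\to T_q$ is itself a $\Gamma$-equivariant harmonic map whose Hopf differential is $q$ (citing \cite[Proposition~2.2]{daskalopoulos2000morganshalen}), rules out the vertex-free case $T_q\cong\RR$ via Theorem~\ref{thm_smallaction} together with \cite[Lemma~2.1]{farbwolf01}, and then applies Theorem~\ref{thm_HopfdiffisJS} directly: if $T_q$ is a $\ZZ$-tree with a vertex, the Hopf differential of $\pi$, namely $q$, must be Jenkins--Strebel. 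This makes the converse a two-line consequence of machinery already built in the paper. Your argument instead works entirely within the geometry of measured foliations: non-Jenkins--Strebel forces a recurrent trajectory, hence a minimal domain with dense separatrices, hence a segment of $T_q$ carrying a dense set of branch points, which is incompatible with a simplicial structure. What your route buys is independence from the harmonic-map theory (you never need $\pi$ to be harmonic) and it handles the $T_q\cong\RR$ case for free, since dense branching already contradicts being a line; what it costs is that you must import three nontrivial facts that the paper nowhere establishes: (a) that the interior of $\overline{\beta}$ is a minimal domain into which separatrices enter and are dense (Strebel's structure theory; note the zeros may sit on the boundary of the domain, so you need prongs pointing inward, which follows from an Euler--Poincar\'e count); (b) that a lifted transversal embeds \emph{isometrically} into $T_q$ --- injectivity via the bigon argument is not enough, you must also exclude shortcuts, which requires the separation/coarea argument that every path between two points of $\tilde\tau$ crosses all the intervening leaves; and (c) that the image of a lifted singular leaf is a genuine branch point, i.e.\ that leaves in distinct sectors of the lifted singular leaf project to distinct directions at that point. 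All three are true and standard (Strebel, Fathi--Laudenbach--Po\'enaru, Morgan--Shalen), and you correctly flag (a) and (b) as the delicate points, but in a final write-up they would need precise citations or proofs, whereas the paper's argument closes with references it has already set up. One small correction: you assert the branch points of $T_q$ are \emph{exactly} the images of singular leaves; only the inclusion ``singular leaves map to branch points'' is needed, and the reverse inclusion is neither needed nor obvious. Also, ``any arc of finite length meets only finitely many vertices'' uses that the vertex set is closed \emph{and} discrete, which is how the paper reads the definition of a $\ZZ$-tree (cf.\ the proof of Lemma~\ref{lem_hausdorff}), so that step is fine under the paper's conventions.
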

\begin{proof}
Suppose $T_q$ is a $\ZZ$-tree. By Theorem \ref{thm_smallaction} and \cite[Lemma 2.1]{farbwolf01}, $T_q$ must have a vertex point. Moreover, by \cite[Proposition 2.2]{daskalopoulos2000morganshalen}, the projection $\pi:\tSigma\to T_q$ is a harmonic map whose Hopf differential is precisely $q$. Thus Theorem \ref{thm_HopfdiffisJS} implies that $q$ is Jenkins-Strebel. Conversely, when $q$ is Jenkins-Strebel, we have shown in Proposition \ref{prop_TqisZtree} that $T_q$ is a $\ZZ$-tree.
\end{proof}

\section{Density and Non-Small Action}
In this section, we investigate Zariski and topological density of representations 
and their geometric consequences for the $\Gamma$-action on $T_F$.
\subsection{Zariski and topological density}

For $H=\rho(\Gamma)$, let $\overline{H}^{\mathrm{Zar}}$ denote the Zariski closure of $H$ with respect to the Zariski topology of $\mathrm{SL}_2(F)$, and let $\overline{H}$ denote the closure of $H$ in the analytic topology of $\mathrm{SL}_2(F)$, which is induced by the non-Archimedean topology on $F$.

\begin{definition}
We say that $\rho$ is \emph{Zariski dense} if $\overline{H}^{\mathrm{Zar}} = \SL_2(F)$, and \emph{topologically dense} if $\overline{H} = \SL_2(F)$.
\end{definition}

\begin{proposition}\label{prop_Zariskidensity}
    An unbounded representation $\rho$ is Zariski dense if and only if it is irreducible.
\end{proposition}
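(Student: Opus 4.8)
The plan is to prove both implications, with the forward direction being immediate and the reverse requiring the classification of algebraic subgroups of $\SL_2$. For the easy direction, suppose $\rho$ is reducible. Then $\rho(\Gamma)$ preserves a line $L\subset F^2$, and after choosing a basis adapted to $L$ the image lies in the Borel subgroup $B(F)$ of upper-triangular matrices, which is a proper Zariski-closed subgroup of $\SL_2(F)$ (cut out by the vanishing of the lower-left entry). Hence $\overline{\rho(\Gamma)}^{\mathrm{Zar}}\subseteq B\subsetneq \SL_2$, so $\rho$ is not Zariski dense. This establishes that Zariski density implies irreducibility.

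For the converse, assume $\rho$ is unbounded and irreducible, and let $\mathbf{G}:=\overline{\rho(\Gamma)}^{\mathrm{Zar}}$, regarded as an algebraic subgroup of $\SL_2$ defined over $F$. By Proposition \ref{prop_absirr}, $\rho$ is absolutely irreducible, so $\mathbf{G}_{\overline{F}}$ acts irreducibly on $\overline{F}^2$. I would then invoke the classification of Zariski-closed subgroups of $\SL_{2,\overline{F}}$: writing $\mathbf{G}^\circ$ for the identity component, the possibilities are $\mathbf{G}^\circ\in\{1,\mathbb{G}_a,\mathbb{G}_m,B,\SL_2\}$. If $\mathbf{G}^\circ$ is $\mathbb{G}_a$ or a Borel, then $\mathbf{G}$ normalizes a one-dimensional unipotent subgroup or is itself a Borel, and in either case it fixes a line in $\overline{F}^2$, contradicting absolute irreducibility. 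If $\mathbf{G}^\circ=\SL_2$, then $\mathbf{G}=\SL_2$ since $\SL_2$ is connected, and we are done. There remain two cases: $\mathbf{G}$ finite (when $\mathbf{G}^\circ=1$), and $\mathbf{G}^\circ=\mathbb{G}_m$ a maximal torus $T$, in which case $\mathbf{G}$ normalizes $T$ and hence $\mathbf{G}\subseteq N(T)$.

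The main obstacle is the torus-normalizer case, precisely because $N(T)$ itself acts irreducibly on $\overline{F}^2$ (the Weyl element swaps the two eigenlines), so irreducibility alone does not exclude it. Here I would bring in the dynamical input: since $\rho$ is unbounded and irreducible, Lemma \ref{lem_freegrpinnonabelian} supplies $\gamma_1,\gamma_2\in\Gamma$ whose images generate a free subgroup of rank two inside $\rho(\Gamma)\subseteq\mathbf{G}(\overline{F})$. However, both a finite group and $N(T)=T\rtimes\langle w\rangle$ are virtually abelian—$N(T)$ contains the abelian subgroup $T$ with index two—and a virtually abelian group cannot contain a rank-two free subgroup. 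This contradiction eliminates both remaining cases, forcing $\mathbf{G}=\SL_2$ and therefore $\overline{\rho(\Gamma)}^{\mathrm{Zar}}=\SL_2(F)$.

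I expect the only delicate points to be bookkeeping ones: first, the passage between the Zariski topology on $\overline{F}$-points and on $F$-points, which is handled by observing that $\mathbf{G}$, being the closure of the $F$-rational subset $\rho(\Gamma)$, is defined over $F$, so that $\mathbf{G}_{\overline{F}}=\SL_{2,\overline{F}}$ yields $\mathbf{G}=\SL_{2,F}$; and second, citing the classification of closed subgroups of $\SL_2$ in a form valid over a non-algebraically-closed field, which is exactly why one base-changes to $\overline{F}$ at the outset and uses absolute irreducibility from Proposition \ref{prop_absirr} rather than mere $F$-irreducibility.
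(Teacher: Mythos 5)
Your proposal is correct, and it rests on the same two pillars as the paper's proof: the Borel-subgroup observation for the easy direction (identical to the paper), and Lemma \ref{lem_freegrpinnonabelian} plus the structure theory of proper algebraic subgroups of $\SL_2$ for the hard direction. The difference is in how the hard direction is organized. The paper argues the contrapositive in one stroke: if $\rho$ is not Zariski dense, its image lies in a proper algebraic subgroup of $\SL_2(F)$, every such subgroup is virtually solvable and hence contains no nonabelian free subgroup, and then Lemma \ref{lem_freegrpinnonabelian} forces $\rho$ to be reducible. In particular the paper never invokes absolute irreducibility (Proposition \ref{prop_absirr}). You instead run a direct case analysis on the identity component of the Zariski closure, using Proposition \ref{prop_absirr} to kill the unipotent and Borel cases and reserving the free-subgroup input for the finite and torus-normalizer cases. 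What your version buys: it makes explicit the $F$-versus-$\overline{F}$ descent issue that the paper glosses over (the classification of subgroups is an algebraically-closed-field statement), and it isolates precisely why irreducibility alone cannot suffice --- the normalizer $N(T)$ acts irreducibly, so some dynamical input is unavoidable there. What the paper's version buys: brevity, and uniformity --- a single virtual-solvability fact disposes of all cases at once, with no need for the absolute irreducibility apparatus. Both arguments are complete; note only that in your finite and torus-normalizer cases you should (as you implicitly do) observe that freeness of the action in Lemma \ref{lem_freegrpinnonabelian} guarantees $\rho$ is injective on $\langle\gamma_1,\gamma_2\rangle$, so the free group of rank two really lands inside $\rho(\Gamma)$.
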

\begin{proof}
    If $\rho$ is not Zariski dense, then $\overline{H}^{\mathrm{Zar}}$ is a proper algebraic subgroup of $\SL_2(F)$. Every proper algebraic subgroup of $\SL_2(F)$ is virtually solvable, hence contains no nonabelian free subgroups. As a result, for any two elements $\gamma_1,\gamma_2\in\Gamma$, the subgroup $\langle\rho(\gamma_1),\rho(\gamma_2)\rangle<\SL_2(F)$ cannot be isometric with to the free group $F_2$. By Lemma \ref{lem_freegrpinnonabelian}, $\rho$ must be reducible.

    Conversely, if $\rho$ is reducible, then $\rho(\Gamma)$ lies in a Borel subgroup of $\SL_2(F)$, wihch is a proper algebraic subgroup of $\SL_2(F)$. Thus $\rho$ is not Zariski dense.
\end{proof}

\begin{corollary}
    The space of Zariski dense representations is open in $\mR^{ub}_F(\Gamma)$.
\end{corollary}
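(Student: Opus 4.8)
The plan is to first invoke Proposition \ref{prop_Zariskidensity}, which identifies the Zariski dense unbounded representations with the irreducible ones inside $\mR^{ub}_F(\Gamma)$. Thus it suffices to prove that the set of irreducible unbounded representations is open in $\mR^{ub}_F(\Gamma)$. I would detect irreducibility through a single trace condition and then exploit the continuity of the trace functions established in Lemma \ref{lem_elementtraceiscontinuous}.

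Concretely, suppose $\rho_0 \in \mR^{ub}_F(\Gamma)$ is irreducible. By Lemma \ref{lem_freegrpinnonabelian} its image is nonabelian, so Lemma \ref{lem_reduciblechar} applies; since $\rho_0$ is irreducible, criterion (iii) of that lemma fails, meaning there exists $\gamma_0 \in [\Gamma,\Gamma]$ with $\trace_{\rho_0}(\gamma_0) \neq 2$. I would then consider
\[
U := \{\rho \in \mR^{ub}_F(\Gamma) : \trace_\rho(\gamma_0) \neq 2\}.
\]
Because $F$ is Hausdorff in the non-Archimedean topology, the singleton $\{2\}$ is closed, so by the continuity of $\rho \mapsto \trace_\rho(\gamma_0)$ (Lemma \ref{lem_elementtraceiscontinuous}) the set $U$ is open and contains $\rho_0$.

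The heart of the argument is to show that $U$ consists only of irreducible representations. Any $\rho \in U$ cannot have abelian image: if it did, every commutator would map to the identity, forcing $\trace_\rho(\gamma) = 2$ for all $\gamma \in [\Gamma,\Gamma]$ and contradicting $\trace_\rho(\gamma_0) \neq 2$. Hence $\rho$ has nonabelian image, so Lemma \ref{lem_reduciblechar} again applies; since $\trace_\rho(\gamma_0) \neq 2$ with $\gamma_0 \in [\Gamma,\Gamma]$, condition (iii) fails and $\rho$ is irreducible. This exhibits $U$ as an open neighborhood of $\rho_0$ contained in the irreducible locus, which proves openness.

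The main obstacle I anticipate is the bookkeeping around representations with abelian image, which is precisely the hypothesis that Lemma \ref{lem_reduciblechar} requires one to rule out. The observation that an abelian image forces every commutator trace to equal $2$ resolves this cleanly, and it simultaneously guarantees that the single commutator $\gamma_0$ chosen for $\rho_0$ continues to certify irreducibility throughout the neighborhood $U$, so no uniformity issue arises from the trace criterion depending on $\rho$.
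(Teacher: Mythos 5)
Your proof is correct, and it follows the same skeleton as the paper's proof --- reduce via Proposition \ref{prop_Zariskidensity} to openness of the unbounded irreducible locus, choose a commutator element witnessing irreducibility, and spread that witness to a neighborhood by continuity --- but the witnessing condition is genuinely different. The paper picks $\gamma\in[\Gamma,\Gamma]$ with $\ell_{\rho}(\gamma)>0$ and uses continuity of $\rho'\mapsto\ell_{\rho'}(\gamma)$: positivity of the length function on a commutator is an open condition that certifies unboundedness and irreducibility simultaneously (an unbounded reducible representation has abelian length function, which vanishes on $[\Gamma,\Gamma]$). You instead pick $\gamma_0\in[\Gamma,\Gamma]$ with $\trace_{\rho_0}(\gamma_0)\neq 2$ and invoke the Culler--Shalen criterion of Lemma \ref{lem_reduciblechar}. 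Your open condition is weaker --- it certifies irreducibility but says nothing about unboundedness --- yet this costs nothing, since the corollary only asks for openness inside $\mR^{ub}_F(\Gamma)$, which is itself open in $\mR_F(\Gamma)$ by Proposition \ref{prop_openness}. What your route buys in exchange: the existence of your witness is immediate from Lemma \ref{lem_reduciblechar} combined with the nonabelian-image observation (exactly the mechanism of Proposition \ref{prop_absirr}), whereas the paper's assertion that an unbounded irreducible representation admits a \emph{hyperbolic} commutator, i.e.\ one with $\ell_\rho(\gamma)>0$ rather than merely $\trace_\rho(\gamma)\neq 2$, is a strictly stronger claim that the paper states without justification. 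Your explicit check that every $\rho\in U$ has nonabelian image (abelian image would force all commutator traces to equal $2$) correctly discharges the hypothesis of Lemma \ref{lem_reduciblechar} at every point of the neighborhood, which is the one place the argument could otherwise have leaked.
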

\begin{proof}
    It suffices to show that the set of unbounded irreducible representations are open.
    
    For such a representation $\rho$, we can find an element $\gamma$ in the commutator subgroup $[\Gamma,\Gamma]$ such that $\ell_{\rho}(\gamma)> 0$. Then we can choose a sufficiently small neighborhood $\mathcal{U}$ of $\rho$ so that $\forall\rho'\in\mathcal{U}$, $\ell_{\rho'}(\gamma)>0$. Therefore, $\mathcal{U}$ consists of unbounded irreducible representations.
\end{proof}

\begin{lemma}\textup{\cite[Chapter II.1.4, Theorem 2]{serre80trees}}\label{lem_topdensetransitive}
    If $\rho$ is topologically dense, then the fundamental domain of the corresponding $\Gamma$-action on $T_F$ is an edge (of length $1$).
\end{lemma}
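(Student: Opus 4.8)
The plan is to exploit the openness of edge stabilizers together with the edge-transitivity of the full group $\SL_2(F)$ recorded in Section~\ref{sec_SL2Faction}. Fix a base edge $e$ of $T_F$ whose stabilizer is the Iwahori subgroup $I=\mathrm{Stab}(e)$. The central step is to observe that $I$ is \emph{open} in $\SL_2(F)$ for the analytic topology: by Lemma~\ref{lem_open_and_closed} the valuation ring $\mathcal{O}_v$ and its maximal ideal $\mathfrak{m}_v=\pi\mathcal{O}_v$ are both open in $F$, so the defining conditions of $I$ (all four entries in $\mathcal{O}_v$, lower-left entry in $\pi\mathcal{O}_v$) cut out an open subset of $\SL_2(F)$.

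Next I would use density. Let $e'$ be an arbitrary edge of $T_F$. Since $\SL_2(F)$ acts transitively on edges, there is $g\in\SL_2(F)$ with $g.e=e'$, and the set of group elements carrying $e$ to $e'$ is exactly the coset $gI$, which is open and nonempty. Because $\rho(\Gamma)$ is topologically dense, it meets every nonempty open set; choosing $\gamma\in\Gamma$ with $\rho(\gamma)\in gI$ and writing $\rho(\gamma)=gi$ with $i\in I$, we obtain $\rho(\gamma).e=g.(i.e)=g.e=e'$. Thus $\rho(\Gamma)$ already acts transitively on the edges of $T_F$.

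Finally, to pass from edge-transitivity to the statement that a fundamental domain is a single edge, I would invoke the bipartite structure of $T_F$. As recalled in Section~\ref{sec_SL2Faction}, $\SL_2(F)$ has exactly two vertex orbits and preserves this bipartition (an element of $\SL_2(F)$, having $v(\det)=0$, never exchanges the two colors and in particular $I$ fixes $e$ endpoint-wise). Hence $\rho(\Gamma)$ also preserves the two vertex classes, so the two endpoints of any edge lie in distinct $\Gamma$-orbits. Combined with edge-transitivity, this shows that $T_F/\Gamma$ is a single closed edge with two distinct vertices, i.e.\ a fundamental domain for the $\Gamma$-action is an edge of length $1$.

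The only genuinely delicate point is the openness of the Iwahori subgroup in the analytic topology; once that is in hand, density does all the work and the remaining steps are formal. One should also take care that the argument uses $\SL_2$ rather than $\mathrm{PGL}_2$, precisely so that the vertex bipartition is preserved and the quotient is a genuine edge rather than being folded onto a half-edge.
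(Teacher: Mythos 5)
Your proof is correct, and it supplies exactly the argument that the paper leaves implicit: the paper offers no proof of this lemma, citing instead Serre's theorem that a segment is a fundamental domain for the \emph{full} group $\SL_2(F)$ acting on $T_F$. The point the citation glosses over is precisely the one you isolate, namely that a topologically dense subgroup has the same vertex and edge orbits as $\SL_2(F)$ because the stabilizers are open. Your three steps are all sound: the Iwahori subgroup is indeed open (for the condition $c\in\pi\mathcal{O}_v$, note either that $x\mapsto \pi x$ is a homeomorphism of $F$ carrying the open set $\mathcal{O}_v$ onto $\pi\mathcal{O}_v$, or repeat the ultrametric argument of Lemma~\ref{lem_open_and_closed}); the coset $gI$ is then open, so density gives edge-transitivity of $\rho(\Gamma)$; and the bipartition argument correctly pins down two vertex orbits, using the paper's observation that $\SL_2(F)$-orbit distances between vertices are even. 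One caveat worth recording: the paper's displayed definition of fundamental domain, requiring $\gamma\cdot D^{\circ}\cap D^{\circ}=\emptyset$ for \emph{all} $\gamma\neq e$, is literally unsatisfiable in this setting, since edge stabilizers in $\Gamma$ are nontrivial (by Proposition~\ref{prop:nonsmall} they even contain rank-two free groups) and fix $D^{\circ}$ pointwise; the intended reading, which is Serre's and which your conclusion actually establishes, is that $D$ maps isomorphically onto the quotient graph $\Gamma\backslash T_F$. With that reading, your final step --- one edge orbit, two vertex orbits, hence $\Gamma\backslash T_F$ is a single closed edge --- is exactly the assertion of the lemma.
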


Here the \emph{fundamental domain} refers to a connected subgraph 
\( D \subset T_F \) such that $$
T_F \;=\; \bigcup_{\gamma \in \Gamma} \gamma \cdot D 
\quad \text{and} \quad 
\gamma \cdot D^\circ \cap D^\circ = \emptyset 
\quad \text{for all } \gamma \neq e .$$ Topological density implies that the action on the set of edges of $T_F$ is transitive.

\begin{corollary}
    If $\rho:\Gamma\to \SL_2(F)$ is a topologically dense representation, then $\ell_{\rho}$ is nonzero and nonabelian. In particular, topological density implies unbounded and Zariski density.
\end{corollary}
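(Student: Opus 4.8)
The plan is to assemble the results established earlier in this section, reducing the statement to the characterizations of unboundedness, irreducibility, and nonabelian length functions; no new ingredient is needed.

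First I would observe that topological density implies Zariski density. Since every polynomial map $\SL_2(F)\to F$ is continuous in the analytic topology, every Zariski-closed subset of $\SL_2(F)$ is analytically closed; hence the analytic closure is contained in the Zariski closure, i.e. $\overline{H}\subseteq\overline{H}^{\mathrm{Zar}}$. If $\rho$ is topologically dense, then $\overline{H}=\SL_2(F)$, which forces $\overline{H}^{\mathrm{Zar}}=\SL_2(F)$, so $\rho$ is Zariski dense.

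Next I would show that $\rho$ is unbounded, i.e. $\ell_\rho\not\equiv 0$. Suppose, to the contrary, that $\rho$ is bounded. By Proposition~\ref{prop:bounded-fixed-vertex}, the induced $\Gamma$-action then fixes a vertex $[\Lambda]\in T_F$. Any isometry fixing $[\Lambda]$ permutes the edges incident to $[\Lambda]$ among themselves, so no edge incident to $[\Lambda]$ can lie in the same $\Gamma$-orbit as an edge not incident to $[\Lambda]$; in particular the $\Gamma$-action on the edge set of $T_F$ is not transitive. This contradicts Lemma~\ref{lem_topdensetransitive}, according to which topological density makes the fundamental domain a single edge, so that the action on edges is transitive. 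Hence $\rho$ is unbounded, and Proposition~\ref{prop:bounded-fixed-vertex} gives $\ell_\rho\not\equiv 0$.

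Finally, combining the two conclusions, $\rho$ is an unbounded and Zariski-dense representation, so Proposition~\ref{prop_Zariskidensity} shows that $\rho$ is irreducible. By the equivalence (for unbounded representations) between irreducibility and nonabelianity of the length function established earlier in Section~\ref{sec_3.1}, $\ell_\rho$ is nonabelian. This proves that $\ell_\rho$ is both nonzero and nonabelian, and, along the way, that topological density implies unboundedness and Zariski density. The argument presents no serious obstacle: the only point requiring a little care is ruling out a global fixed vertex, which follows cleanly from edge-transitivity (equivalently, one may argue directly that the bounded subgroup $\SL_2(\mathcal{O}_v)$ is analytically closed and proper, so containment of $\rho(\Gamma)$ in a conjugate of it would already contradict density).
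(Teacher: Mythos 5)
Your proof is correct, but it runs the logic in the opposite direction from the paper, and partly through different lemmas. The paper's proof is a two-line tree-dynamical argument: edge-transitivity from Lemma~\ref{lem_topdensetransitive} immediately rules out both a global fixed vertex and a fixed end, giving ``nonzero and nonabelian'' directly (via Proposition~\ref{prop:bounded-fixed-vertex} and Proposition~\ref{prop:length-function}), with unboundedness and Zariski density then following ``in particular'' through the irreducibility dictionary. You instead prove the ``in particular'' clause first and deduce the length-function statement from it: Zariski density comes from the purely topological observation that the Zariski topology is coarser than the analytic one (so $\overline{H}\subseteq\overline{H}^{\mathrm{Zar}}$), unboundedness comes from edge-transitivity contradicting a fixed vertex (or, in your alternative, from $\SL_2(\mathcal{O}_v)$ being a proper analytically closed subgroup, cf.\ Lemma~\ref{lem_open_and_closed}), and nonabelianity is then extracted from Proposition~\ref{prop_Zariskidensity} together with the lemma that, for unbounded representations, reducibility is equivalent to abelianity of $\ell_\rho$. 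Each route has a virtue: the paper's is shorter and stays entirely inside tree language, while yours avoids the ``no fixed end'' step altogether --- and hence sidesteps the minor subtlety that Proposition~\ref{prop:length-function} is stated for \emph{minimal} $\Gamma$-trees, whereas $T_F$ itself need not be minimal --- and your topology-comparison argument for Zariski density is more elementary and applies to any topologically dense subgroup, independent of the Bruhat--Tits geometry.
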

\begin{proof}
    By Lemma \ref{lem_topdensetransitive}, $\Gamma$ acts transitively on the set of edges of $T_F$. Thus $\Gamma$ has neither a global fixed point nor a fixed end, hence $\ell_{\rho}$ is nonzero and nonabelian. 
\end{proof}

The converse is false in general: a representation may be Zariski dense without being topologically dense. Indeed, there exist representations whose images are discrete free subgroups of $\SL_2(F)$ of rank at least 2 \cite{Schottkygroups,Schottkygrouplocalfield}.
By Proposition~\ref{prop_Zariskidensity}, such subgroups are Zariski dense.

\subsection{Small actions}
A $\Gamma$-action on a $\mbR$-tree is called \emph{small} if edge stablizers do not contain rank two free groups. It is called minimal if it has no proper $\Gamma$-subtree. The work of Morgan-Otal \cite{morganotal93}, Skora \cite{Skora96} reflects the importance of minimal small action, see also Farb-Wolf \cite{farbwolf01}.
\begin{theorem}[\cite{morganotal93,Skora96,farbwolf01}]\label{thm_smallaction}
Let $\Sigma$ be a closed Riemann surface of genus $\geq 2$ with fundamental group $\Gamma$. 
\begin{itemize}
    \item[\emph{(i)}] If $\Gamma$ acts isometrically on an $\mathbb{R}$-tree $T$ in a small and minimal way, 
    then there exists a holomorphic quadratic differential $q$ on $\Sigma$ such that 
    $T$ is $\Gamma$-equivariantly isometric to the dual tree of the vertical measured foliation of $q$ lifted to $\tSigma$.
    \item[\emph{(ii)}] Conversely, for any holomorphic quadratic differential $q$ on $\Sigma$, 
    the dual tree of its vertical measured foliation carries a small isometric $\Gamma$-action.
\end{itemize}
\end{theorem}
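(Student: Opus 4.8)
The plan is to treat the two directions separately: part (ii) is elementary, while part (i) is the substantial direction and will rest on the harmonic-map factorization of Proposition~\ref{prop_harmoncmap_factortree} together with Skora's rigidity theorem.

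For part (ii), I would start from the dual tree $T_q$ of the lifted vertical foliation $(\tilde{\F}_q,\tilde{\mu}_q)$ on $\tSigma$, with the induced isometric $\Gamma$-action constructed after Definition~\ref{def:Gammatree}, and reduce smallness to the claim that every edge stabilizer is cyclic (cyclic groups contain no rank-two free subgroup). An edge of $T_q$ is the image of a nondegenerate transverse arc, that is, of a band of at least two distinct leaves at positive transverse distance. The key observation is that if $\gamma\in\Gamma$ fixes such an edge pointwise it preserves each leaf of the band, hence preserves the maximal foliated strip containing them; since $\tSigma\approx\mathbb{H}^2$, this strip is an embedded band, whose stabilizer in the torsion-free group $\Gamma$ is infinite cyclic or trivial. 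This yields smallness.

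For part (i), I would assume the action on $T$ is nontrivial, minimal, and small, so that $\ell_T\not\equiv 0$. First I would produce a $\Gamma$-equivariant harmonic map $u\colon\tSigma\to T$: by Theorem~\ref{theorem_existence_uniqueness} when $\ell_T$ is nonabelian, and by Proposition~\ref{prop:abelianHopfdiff} when $\ell_T$ is abelian, in which case $T$ is a line. Setting $q:=q_u$, Proposition~\ref{prop_harmoncmap_factortree} factors $u=f\circ\pi$, where $\pi\colon\tSigma\to T_q$ is the projection onto the dual tree of the vertical foliation of $q$ and $f\colon T_q\to T$ is a folding map. The crux is that smallness forbids $f$ from folding---this is Skora's theorem \cite{Skora96}, in the resolution framework of \cite{morganotal93,farbwolf01}---so that, by the morphism rigidity recalled after the definition of folding maps, $f$ is an embedding onto its image. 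Since $\pi$ is surjective and $T$ is minimal, Theorem~\ref{thm_minimal} forces the image of $u$, hence of $f$, to be all of $T$; thus $f$ is a $\Gamma$-equivariant isometry $T_q\xrightarrow{\sim}T$, identifying $T$ with the dual tree of the vertical foliation of $q$.

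The main obstacle is precisely the invocation of Skora's theorem in part (i): the no-folding statement is the genuinely deep ingredient, whereas the remaining steps are bookkeeping around the factorization already assembled in the preliminaries. A minor point to verify in the abelian subcase is that the line furnished by Proposition~\ref{prop:abelianHopfdiff} carries the same translation length function as $T$, so that the two line actions agree up to $\Gamma$-equivariant isometry and $q=\omega\otimes\omega$ indeed has $T$ as its dual tree.
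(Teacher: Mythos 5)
The first thing to note is that the paper does not prove this theorem: it is stated with a citation to \cite{morganotal93,Skora96,farbwolf01} and used as a black box, and the paper immediately quotes, as a second theorem from the same sources, exactly the no-folding statement your argument hinges on. So there is no internal proof to compare against; the question is whether your reconstruction is sound as a reduction to those references. For part (i) it essentially is: existence of the equivariant harmonic map in the nonabelian case (the action has no fixed end by Proposition~\ref{prop:length-function}, which is what the existence theory behind Theorem~\ref{theorem_existence_uniqueness} requires), the factorization $u=f\circ\pi$ of Proposition~\ref{prop_harmoncmap_factortree}, Skora's no-folding theorem to make $f$ an embedding, and minimality plus Theorem~\ref{thm_minimal} to make $f$ surjective, hence a $\Gamma$-equivariant isometry $T_{q_u}\to T$; the abelian case via Proposition~\ref{prop:abelianHopfdiff}, together with the remark that two translation actions on a line with the same length function differ only by a sign, is also fine. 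Just be aware that this is a reduction, not an independent proof: the no-folding statement \emph{is} the deep content of (i), so all the difficulty is deferred to \cite{Skora96} --- which is precisely what the paper does, and you acknowledge this.

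The one genuine flaw is in part (ii), in the step ``since $\tSigma\approx\mathbb{H}^2$, this strip is an embedded band, whose stabilizer in the torsion-free group $\Gamma$ is infinite cyclic or trivial.'' Torsion-freeness plus embeddedness of a simply connected invariant region gives no such conclusion: a rank-two free group acts freely and properly discontinuously on $\mathbb{H}^2$ (e.g.\ as the deck group of a thrice-punctured sphere), so the stabilizer of an invariant planar region can perfectly well contain $F_2$. What actually closes the argument is the clause you wrote just before: the pointwise stabilizer of the edge preserves each individual leaf of the band, and a generic such leaf is a properly embedded line on which the stabilizer acts freely (a fixed point on the leaf would be a fixed point in $\tSigma$). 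A group acting freely on $\mathbb{R}$ is abelian by H\"older's theorem (alternatively, infinite cyclic or trivial, using proper discontinuity of the action on the leaf), hence contains no rank-two free subgroup; and since abelian subgroups of the surface group $\Gamma$ are cyclic, edge stabilizers are indeed virtually cyclic --- note also that the setwise stabilizer of an edge contains the pointwise one with index at most two, which does not affect containment of free subgroups. With that repair, (ii) is the standard leaf-stabilizer argument and is correct.
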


Consider a $\Gamma$-tree $T$. Let $u:\tSigma\to T$ be a $\Gamma$-equivariant harmonic map and $q_u$ be the associated Hopf differential. Then there exists a folding map $f:T_{q_u}\to T$. 

\begin{theorem}[\cite{morganotal93,Skora96,farbwolf01}]
	Suppose the $\Gamma$-action on $T$ is small and minimal, then the folding map $f:T_q\to T$ is an isomorphism.
\end{theorem}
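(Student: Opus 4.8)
The plan is to reduce the statement to showing that the folding map $f:T_{q_u}\to T$ furnished by Proposition~\ref{prop_harmoncmap_factortree} has no fold, and then to contradict smallness whenever it does. First I would record a surjectivity statement: since the action on $T$ is minimal, Definition~\ref{def:Gammatree} gives $T=T_{\min}$, so Theorem~\ref{thm_minimal} forces the image of $u$ to be all of $T$. As $\pi:\tSigma\to T_{q_u}$ is surjective by construction of the leaf space and $u=f\circ\pi$, the map $f$ is surjective. On the other hand, by the remark following the definition of a folding map, which invokes \cite[Lemma~I.1.1]{morganotal93}, a morphism of $\Gamma$-trees is an isometric embedding onto its image unless it folds at some point. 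Combining the two observations, if $f$ does not fold then it is a surjective isometric embedding, hence a $\Gamma$-equivariant isometry, i.e. an isomorphism. Thus everything reduces to ruling out folding.

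Second, I would argue by contradiction. Suppose $f$ folds at a point $x\in T_{q_u}$, so that there exist segments $[x,y_1]$ and $[x,y_2]$ with $[x,y_1]\cap[x,y_2]=\{x\}$ which $f$ maps isometrically onto a common segment $[f(x),z]\subset T$. Pulling this picture back through $\pi$, the two distinct initial directions at $x$ correspond to two families of leaves of the lifted vertical foliation $\tilde{\F}_{q_u}$ on $\tSigma$ that are being identified by $u$. The decisive point, which is exactly the content of Skora's theorem \cite{Skora96} in the harmonic-map formulation of Farb--Wolf \cite{farbwolf01}, is that such an identification is incompatible with a small $\Gamma$-action: heuristically, the loops on $\Sigma$ crossing between the two folded leaf families are absorbed into the stabilizer of the common image edge $e\subset[f(x),z]$, and on a surface of genus at least two these loops generate a free group of rank two, so that $F_2\subset\mathrm{Stab}(e)$, contradicting smallness. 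With folding excluded, the reduction of the first paragraph yields that $f$ is an isomorphism.

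I expect the genuinely hard step to be the middle one, namely converting a geometric fold of $T_{q_u}$ into an embedded rank-two free subgroup of an edge stabilizer; this is the technical heart of \cite{Skora96} and \cite{farbwolf01}. Rather than reprove it, I would invoke their results directly: Skora's theorem (Theorem~\ref{thm_smallaction}(i)) asserts that a small minimal action of a surface group is dual to a measured foliation, and the harmonic-map proof of Farb--Wolf shows that the folding map out of the Hopf-differential tree realizing this duality must be an isometry precisely because a fold would violate smallness. The only new input needed here is the elementary reduction above, which uses Theorem~\ref{thm_minimal} and the embedding-unless-folding dichotomy to turn ``$f$ does not fold'' into ``$f$ is an isomorphism.''
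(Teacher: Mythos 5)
Your proposal is correct and matches the paper's treatment: the paper states this result purely as a citation to Morgan--Otal, Skora, and Farb--Wolf without supplying its own proof, and your argument likewise defers the crux (that a fold is incompatible with smallness) to exactly those references. The elementary glue you add --- minimality plus Theorem~\ref{thm_minimal} giving surjectivity of $f$, combined with the embedding-unless-folding dichotomy of \cite[Lemma~I.1.1]{morganotal93} so that ``no fold'' upgrades to ``$\Gamma$-equivariant isomorphism'' --- is sound and is consistent with how those sources organize the argument.
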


 Consider a representation $\rho:\Gamma\to\SL_2(F)$. Let $e$ be an edge of the Bruhat-Tits tree $T_F$, we define the edge stablizer:
\[
(\Gamma)_e:=\{\gamma\in\Gamma:\ \rho(\gamma)\in \mathrm{Stab}_{\mathrm{SL}_2(F)}(e)\}.
\]
\begin{proposition}
\label{prop:nonsmall}
Suppose $F$ is a \emph{locally compact} non-Archimedean field. Let $\rho:\Gamma\to \SL_2(F)$ be an unbounded representation. The induced $\Gamma$–action on $T_F$ is not small. In particular, there exists an edge $e$ of $T_F$
such that for every $\gamma. e$ with $\gamma\in\Gamma$, the edge stablizer $(\Gamma)_{\gamma. e}$ contains a free group of rank two.
\end{proposition}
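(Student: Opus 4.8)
The plan is to argue by contradiction, assuming the $\Gamma$-action on $T_F$ is small and deriving a contradiction from the structure of the leaf space of a Jenkins--Strebel differential. Since $(\Gamma)_{\gamma.e}=\gamma(\Gamma)_e\gamma^{-1}$ for every $\gamma\in\Gamma$, the stabilizer of $\gamma.e$ contains a rank-two free group as soon as $(\Gamma)_e$ does; hence it suffices to exhibit a single edge $e$ with $(\Gamma)_e\supseteq F_2$, which is precisely the failure of smallness. Because $\rho$ is unbounded, $\ell_\rho\not\equiv 0$ by Proposition~\ref{prop:bounded-fixed-vertex}, and I will split according to whether $\ell_\rho$ is nonabelian (the irreducible case) or abelian (the completely reducible case); together these exhaust the reductive unbounded representations, matching the hypothesis of the main theorem.

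For the main case, assume $\rho$ is irreducible, so $\ell_\rho$ is nonabelian, and suppose toward a contradiction that the action on $T_F$ is small. Passing to the minimal subtree $T_{\min}\subseteq T_F$ (Lemma~\ref{lem_minimalsubtree}), the induced action is minimal and still small, since every edge of $T_{\min}$ is an edge of $T_F$ carrying the same $\Gamma$-stabilizer. By Theorem~\ref{theorem_existence_uniqueness} there is a unique $\Gamma$-equivariant harmonic map $u:\tSigma\to T_F$, and it is nonconstant because $\ell_\rho\not\equiv 0$; by Theorem~\ref{thm_minimal} its image is exactly $T_{\min}$, so $u$ may be viewed as a harmonic map into $T_{\min}$. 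Since $T_F$ is a $\ZZ$-tree with nonempty vertex set, Theorem~\ref{thm_HopfdiffisJS} shows that the Hopf differential $q_u$ is Jenkins--Strebel, whence by Proposition~\ref{prop_TqisZtree} the dual tree $T_{q_u}$ is a $\ZZ$-tree in which every vertex has countably infinitely many incident edges.

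Now I invoke the folding-map rigidity theorem stated above (\cite{Skora96,morganotal93,farbwolf01}): because the $\Gamma$-action on $T_{\min}$ is small and minimal, the folding map $f:T_{q_u}\to T_{\min}$ furnished by Proposition~\ref{prop_harmoncmap_factortree} must be an isomorphism of $\Gamma$-trees. This is impossible: as $F$ is locally compact its residue field $k_v$ is finite, so $T_F$ is $(|k_v|+1)$-regular and the subtree $T_{\min}$ is locally finite, whereas an isomorphism must send the infinite-valence vertices of $T_{q_u}$ to points of $T_{\min}$ of the same valence. This contradiction shows the action is not small, i.e. there is an edge $e$ with $(\Gamma)_e\supseteq F_2$. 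It remains to handle the completely reducible unbounded case, where $\ell_\rho$ is abelian and the harmonic-map machinery does not apply. Here $\rho$ is conjugate into the diagonal torus, writing $\rho(\gamma)=\diag(a_\gamma,a_\gamma^{-1})$ in a common eigenbasis, and $w:\Gamma\to\ZZ,\ \gamma\mapsto v(a_\gamma)$, is a homomorphism that is nontrivial exactly because $\rho$ is unbounded. As $\Gamma$ is a surface group, the kernel of a nontrivial homomorphism onto a subgroup of $\ZZ$ is an infinitely generated free group, hence contains $F_2$. Every $\diag(a_\gamma,a_\gamma^{-1})$ with $a_\gamma\in\mathcal{O}_v^\times$ fixes the $\Gamma$-invariant apartment $L\subset T_F$ (the line joining the two fixed ends) pointwise, so $\ker w$ stabilizes every edge of $L$; choosing any edge $e\subset L$ gives $(\Gamma)_e\supseteq\ker w\supseteq F_2$.

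The main obstacle is the valence comparison driving the irreducible case: it couples the Jenkins--Strebel property, which via Proposition~\ref{prop_TqisZtree} forces the leaf space $T_{q_u}$ to carry vertices of infinite valence, with the local finiteness of $T_F$, which uses the local compactness of $F$. One must also verify carefully that smallness descends from $T_F$ to $T_{\min}$, that the harmonic map can be realized on the minimal subtree with the folding factorization of Proposition~\ref{prop_harmoncmap_factortree}, and that Skora's rigidity applies there; the nonabelian hypothesis is exactly what guarantees existence and uniqueness of the harmonic map and hence the applicability of this machinery. A subtler point, which I would flag but which lies outside the reductive setting of the theorem, is the genuinely parabolic case (a single fixed end with nontrivial unipotent part), where $\ker w$ need not stabilize a common edge and a separate analysis would be required.
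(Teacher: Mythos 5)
Your proposal is correct, and its core is exactly the paper's argument: assuming smallness, Skora--Morgan--Otal folding rigidity forces the folding map $f:T_{q_u}\to T_{\min}$ to be an isomorphism, which contradicts Proposition~\ref{prop_TqisZtree} (the dual tree of a Jenkins--Strebel differential has vertices of infinite valence) against the local finiteness of $T_F$ coming from local compactness of $F$; the reduction to a single edge via $(\Gamma)_{\gamma.e}=\gamma(\Gamma)_e\gamma^{-1}$ is also the paper's closing step. Where you genuinely add something: the paper's proof is stated in three lines and applies the rigidity theorem directly to ``the'' tree $T_{q}$, tacitly assuming that the equivariant harmonic map (and hence its Hopf differential tree) exists; as the paper's own remark after Theorem~\ref{theorem_existence_uniqueness} notes, existence can fail when $\ell_\rho$ is abelian, so the paper's argument really only covers the irreducible (nonabelian length function) case. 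Your case split repairs this for the statement as literally phrased: in the completely reducible unbounded case you bypass harmonic maps entirely, observing that the kernel of the valuation character $w(\gamma)=v(a_\gamma)$ is an infinite-index, hence free and nonabelian, subgroup of the surface group whose image consists of diagonal matrices with unit entries, so it fixes the invariant apartment edgewise and every edge of that apartment already has stabilizer containing $F_2$. This elementary argument buys a complete proof in the reductive setting, matching the hypothesis of the introduction's theorem, and your flag that the genuinely parabolic (non-reductive) unbounded case is covered by neither argument is accurate --- that gap is present in the paper's proof as well, and is presumably why the introduction assumes reductivity while Proposition~\ref{prop:nonsmall} omits it.
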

\begin{proof}
    Recall that if $F$ is locally compact, $T_F$ is a simplicial tree of finite valence. 
    
    If $\rho$ is small, then the folding $T_q\to T_{\min}$ is an isomorphism, where $T_{\min}$ is the minimal $\Gamma$-subtree in $T_F$. However, by Proposition \ref{prop_TqisZtree}, $T_q$ is of infinite valence, which cannot be a subtree of $T_F$. 
    
    Therefore, the $\Gamma$-action cannot be small and there exists an edge $e$ such that $(\Gamma)_e$ contains a free group of rank two. Moreover, for $e'=\gamma. e$, we have $(\Gamma)_{e'}=\gamma\,(\Gamma)_e\,\gamma^{-1}$; hence non-smallness holds for every edge in $\Gamma. e$.
\end{proof}

Suppose $\rho$ is topological dense, then $T_{\min}=T_F$. As topological dense representations acts transitively on the edge of the Bruhat-Tits building, we conclude
\begin{corollary}
Suppose $\rho$ is topological dense, then every edge stablizer contains a rank two free group.
\end{corollary}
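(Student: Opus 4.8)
The plan is to obtain this as an immediate consequence of Proposition~\ref{prop:nonsmall} combined with the edge-transitivity supplied by Serre's theorem. The overall structure is: upgrade the single edge produced by non-smallness to \emph{all} edges by using that topological density makes the edge action transitive, and then propagate the rank-two free subgroup by conjugation.

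First I would verify that the hypothesis already puts us in the setting of Proposition~\ref{prop:nonsmall}. By the corollary immediately preceding this statement, topological density forces $\ell_\rho$ to be nonzero and nonabelian, so in particular $\rho$ is unbounded; and since $F$ is locally compact, $T_F$ is a simplicial tree of finite valence. Proposition~\ref{prop:nonsmall} then supplies at least one edge $e_0$ of $T_F$ whose stabilizer $(\Gamma)_{e_0}$ contains a free subgroup of rank two.

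Next I would invoke Lemma~\ref{lem_topdensetransitive}: topological density implies that a fundamental domain for the $\Gamma$-action on $T_F$ is a single edge, equivalently that $\Gamma$ acts transitively on the edge set of $T_F$. Consequently every edge $e$ of $T_F$ may be written as $e=\gamma. e_0$ for some $\gamma\in\Gamma$. Finally I would transport the free subgroup along the orbit: since stabilizers transform by conjugation, $(\Gamma)_{\gamma. e_0}=\gamma\,(\Gamma)_{e_0}\,\gamma^{-1}$, and a conjugate of a rank-two free group is again free of rank two, so every edge stabilizer contains such a subgroup.

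There is no genuine obstacle here---the corollary is purely a matter of propagating the conclusion of Proposition~\ref{prop:nonsmall} across the now-transitive edge orbit---so the only thing requiring care, rather than real difficulty, is checking that topological density does entail both unboundedness (so that Proposition~\ref{prop:nonsmall} applies) and edge-transitivity (so that a single edge suffices); both are recorded in the results stated just above the statement.
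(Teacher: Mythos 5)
Your proposal is correct and matches the paper's argument: the paper likewise combines Proposition~\ref{prop:nonsmall} (one edge with rank-two free stabilizer, propagated by conjugation along its orbit) with the edge-transitivity from Lemma~\ref{lem_topdensetransitive}, noting that topological density forces unboundedness so the proposition applies. Your write-up is in fact slightly more careful than the paper's, since you explicitly verify the unboundedness hypothesis via the preceding corollary.
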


\bibliography{references}
\bibliographystyle{plain}

\Addresses
\end{document}